%% LyX 2.3.7 created this file.  For more info, see http://www.lyx.org/.
%% Do not edit unless you really know what you are doing.
\documentclass[british,12pt]{article}
\usepackage[T1]{fontenc}
\usepackage[utf8]{inputenc}
\usepackage{geometry}
\geometry{verbose,tmargin=2.5cm,bmargin=2.5cm,lmargin=2.5cm,rmargin=2.5cm}
\usepackage{color}
\usepackage{babel}
\usepackage{array}
\usepackage{float}
\usepackage{url}
\usepackage{amsmath}
\usepackage{amsthm}
\usepackage{amssymb}
\usepackage{graphicx}
\usepackage[unicode=true,pdfusetitle,
 bookmarks=true,bookmarksnumbered=false,bookmarksopen=false,
 breaklinks=false,pdfborder={0 0 1},backref=false,colorlinks=true]
 {hyperref}

\makeatletter

%%%%%%%%%%%%%%%%%%%%%%%%%%%%%% LyX specific LaTeX commands.
%% Because html converters don't know tabularnewline
\providecommand{\tabularnewline}{\\}

%%%%%%%%%%%%%%%%%%%%%%%%%%%%%% Textclass specific LaTeX commands.
\newcommand{\lyxaddress}[1]{
	\par {\raggedright #1
	\vspace{1.4em}
	\noindent\par}
}
\theoremstyle{plain}
\newtheorem{thm}{\protect\theoremname}
\theoremstyle{definition}
\newtheorem{defn}[thm]{\protect\definitionname}
\theoremstyle{remark}
\newtheorem{rem}[thm]{\protect\remarkname}
\theoremstyle{plain}
\newtheorem{cor}[thm]{\protect\corollaryname}
\ifx\proof\undefined
\newenvironment{proof}[1][\protect\proofname]{\par
	\normalfont\topsep6\p@\@plus6\p@\relax
	\trivlist
	\itemindent\parindent
	\item[\hskip\labelsep\scshape #1]\ignorespaces
}{%
	\endtrivlist\@endpefalse
}
\providecommand{\proofname}{Proof}
\fi
\theoremstyle{plain}
\newtheorem{prop}[thm]{\protect\propositionname}
\theoremstyle{definition}
\newtheorem{example}[thm]{\protect\examplename}
\newenvironment{lyxlist}[1]
	{\begin{list}{}
		{\settowidth{\labelwidth}{#1}
		 \setlength{\leftmargin}{\labelwidth}
		 \addtolength{\leftmargin}{\labelsep}
		 }}
	{\end{list}}

%%%%%%%%%%%%%%%%%%%%%%%%%%%%%% User specified LaTeX commands.
\usepackage{tikz-cd}

\makeatother

\providecommand{\corollaryname}{Corollary}
\providecommand{\definitionname}{Definition}
\providecommand{\examplename}{Example}
\providecommand{\propositionname}{Proposition}
\providecommand{\remarkname}{Remark}
\providecommand{\theoremname}{Theorem}

\begin{document}
\title{Data-driven reduced order models using invariant foliations, manifolds
and autoencoders}
\author{Robert Szalai}
\date{latest revision: 21 April 2023}
\maketitle

\lyxaddress{University of Bristol, Department of Engineering Mathematics, email
\url{r.szalai@bristol.ac.uk}}
\begin{abstract}
This paper explores how to identify a reduced order model (ROM) from
a physical system. A ROM captures an invariant subset of the observed
dynamics. We find that there are four ways a physical system can be
related to a mathematical model: invariant foliations, invariant manifolds,
autoencoders and equation-free models. Identification of invariant
manifolds and equation-free models require closed-loop manipulation
of the system. Invariant foliations and autoencoders can also use
off-line data. Only invariant foliations and invariant manifolds can
identify ROMs, the rest identify complete models. Therefore, the common
case of identifying a ROM from existing data can only be achieved
using invariant foliations.

Finding an invariant foliation requires approximating high-dimensional
functions. For function approximation, we use polynomials with compressed
tensor coefficients, whose complexity increases linearly with increasing
dimensions. An invariant manifold can also be found as the fixed leaf
of a foliation. This only requires us to resolve the foliation in
a small neighbourhood of the invariant manifold, which greatly simplifies
the process. Combining an invariant foliation with the corresponding
invariant manifold provides an accurate ROM. We analyse the ROM in
case of a focus type equilibrium, typical in mechanical systems. The
nonlinear coordinate system defined by the invariant foliation or
the invariant manifold distorts instantaneous frequencies and damping
ratios, which we correct. Through examples we illustrate the calculation
of invariant foliations and manifolds, and at the same time show that
Koopman eigenfunctions and autoencoders fail to capture accurate ROMs
under the same conditions.
\end{abstract}

\section{Introduction}

There is a great interest in the scientific community to identify
explainable and/or parsimonious mathematical models from data. In
this paper we classify these methods and identify one concept that
is best suited to accurately calculate reduced order models (ROM)
from off-line data. A ROM must track some selected features of the
data over time and predict them into the future. We call this property
of the ROM \emph{invariance}. A ROM may also be \emph{unique}, which
means that barring a (nonlinear) coordinate transformation, the mathematical
expression of the ROM is independent of who and when obtained the
data as long as the sample size is sufficiently large and the distribution
of the data satisfies some minimum requirements.

\begin{figure}
\begin{centering}
\includegraphics[width=0.49\linewidth]{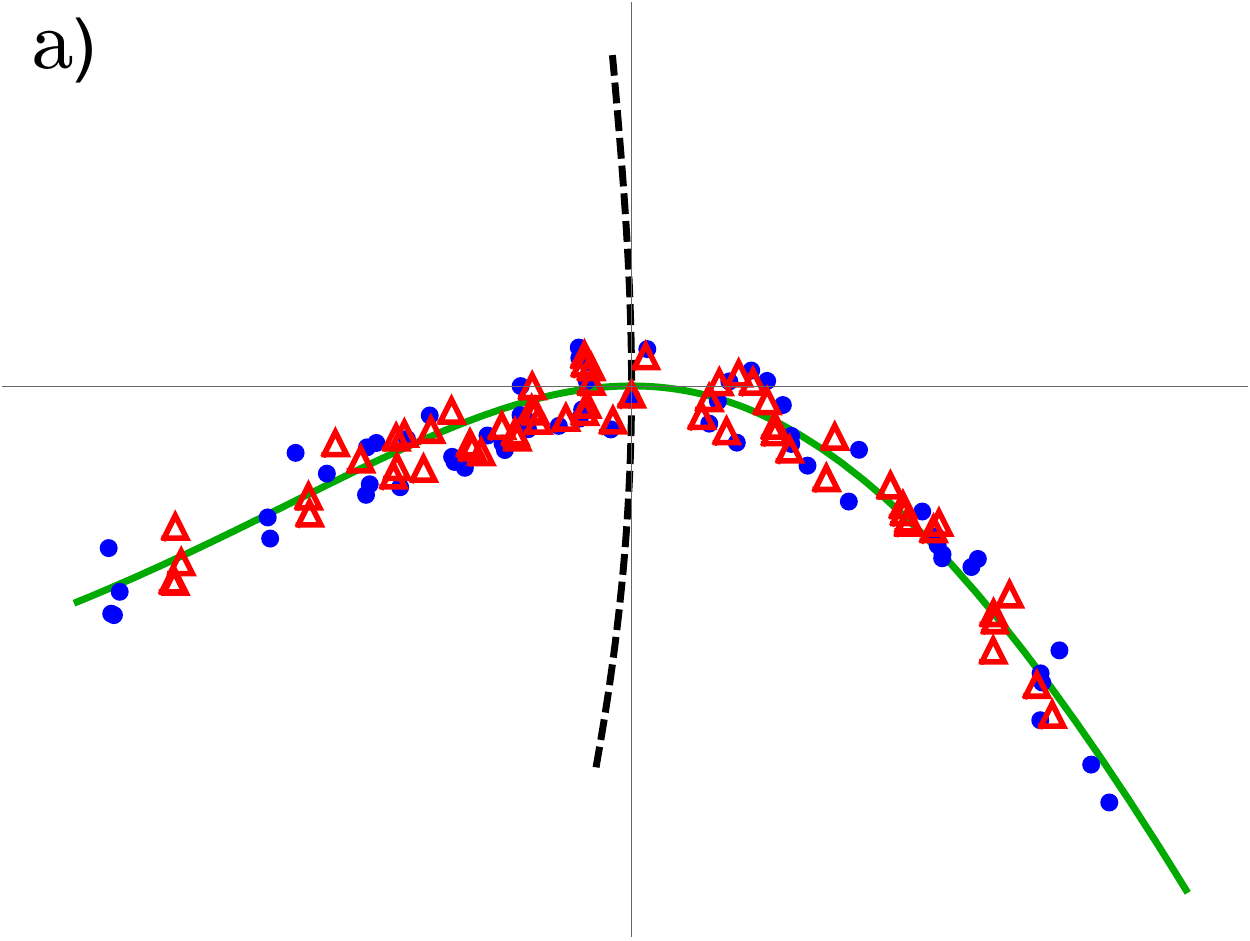}\includegraphics[width=0.49\linewidth]{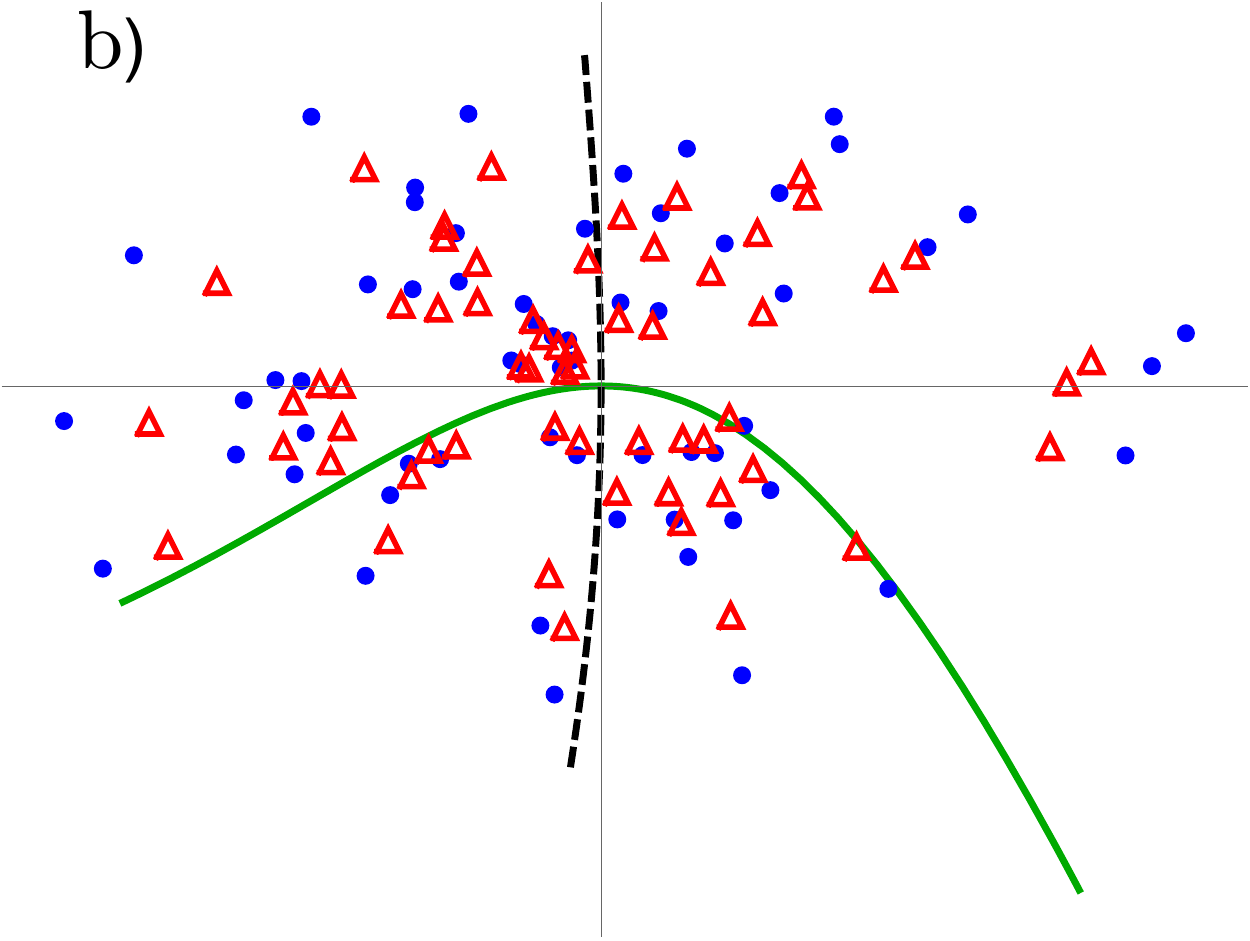}
\par\end{centering}
\caption{\label{fig:data-structures}Two cases of data distribution. The blue
dots (initial conditions) are mapped into the red triangles by the
nonlinear map $\boldsymbol{F}$ given by equation (\ref{eq:caricature-mod}).
a) data points are distributed in the neighbourhood of the solid green
curve, which is identified as an approximate invariant manifold. b)
Initial conditions are well-distributed in the neighbourhood of the
steady state and there is no obvious manifold structure. For completeness,
the second invariant manifold is denoted by the dashed line.}
\end{figure}

Not all methods that identify low-dimensional models produce ROMs.
In some cases the data lie on a low-dimensional manifold embedded
in a high-dimensional, typically Euclidean, space as in figure \ref{fig:data-structures}(a).
In this case the task is to parametrise the low-dimensional manifold
and fit a model to the data in the coordinates of the parametrisation.
The choice of parametrisation influences the form of the model. It
is desired to use a parametrisation that yields a model with the least
number of parameters. Approaches to reduce the number of parameters
include compressed sensing \cite{Donoho2006,billings2013nonlinear,Brunton2014,BruntonPNAS2016,Champion2019Autoencoder}
and normal form methods \cite{Read1998NormalForm,Yair2017DiffusionNormal,Cenedese2022NatComm}.
The methods to obtain a parametrisation of the manifold include diffusion
maps \cite{Coifman2006}, isomaps \cite{Tenenbaum2000isomap}, autoencoders
\cite{Kramer1991autoencoder,Champion2019Autoencoder,KaliaMeijerBrunton2021,Cenedese2022NatComm}
and equation-free models \cite{Kevrekidis2003,Samey}.

The main focus of this paper is genuine ROMs, where we need to find
structure in a cloud of data as illustrated in figure \ref{fig:data-structures}(b).
An invariant manifold provides such structure, since all trajectories
starting from the manifold stay on the manifold for all times. However
an invariant manifold is not defined by the dynamics on it but by
the dynamics in its neighbourhood \cite{Fenichel,delaLlave1997,CabreLlave2003}.
This means that identifying a manifold must also involve identifying
the dynamics in its neighbourhood, which is regularly omitted, such
as in \cite{Cenedese2022NatComm}. We find that resolving the nearby
dynamics is the same as identifying an invariant foliation, as explained
in section \ref{subsec:LocallyAccurateEncoder}. 

Invariant foliations \cite{Szalai2020ISF} can also be used to find
structure in data like in figure \ref{fig:data-structures}(b). An
invariant foliation consists of a family of leaves (manifolds) that
map onto each other under the dynamics of our system as in figure
\ref{fig:manifold-foliation}(a). Each leaf of the foliation has a
parameter from a space which has the dimensionality of the phase space
minus the dimensionality of a leaf. As the leaves map onto each other,
so do the parameters, which then defines a low-dimensional map or
ROM. A leaf that maps onto itself is an invariant manifold. Invariant
foliations are also useful to find suitable initial conditions for
ROMs \cite{Roberts89}.

A natural question is if we have exhausted all possible concepts that
identify structure in dynamic data. To address this, we systematically
explore how a physical (or any other data producing) system can be
related to a mathematical model. This allows us to categorise ROM
concepts and choose the most suitable one for a given purpose. Through
this process we arrive at the definitions of four cases: invariant
foliations, invariant manifolds, autoencoders and equation-free models,
and uncover their relations to each other. 

We find that not all methods are applicable to off-line produced data.
Indeed, calculating an invariant manifold requires actively probing
the mathematical model \cite{Haller2016}. If a model is not available,
the physical system must be placed under \emph{closed-loop} control,
such as in control-based continuation \cite{SieberCBC2008,BartonCBC2017},
which can currently identify equilibria or periodic orbits, but potentially
it could also be used to find invariant manifolds of those equilibria
and periodic orbits. Equation-free models were developed to recover
inherently low-order dynamics from large systems by systematically
probing the system input \cite{Kevrekidis2003,Samey}. However, not
all systems can be put under closed-loop control, mainly because either
setting up the required high-speed control loop is too costly or time
consuming, or the system is simply inaccessible to the data analyst.
In this case, the data collection is carried out separately from the
data analysis without instant feedback to the system. We call this
case \emph{open-loop} or \emph{off-line} data collection. We conclude
that invariant foliations and autoencoders can be fitted to off-line
data, but only invariant foliations produce genuine ROMs.

Surprisingly, invariant foliations were not explored for ROM identification
before paper \cite{Szalai2020ISF}. Here, we propose a two-stage process,
which finds an invariant foliation \cite{Szalai2020ISF}, that captures
a low-order model and then finds a transverse and locally defined
invariant foliation, whose fixed leaf is the invariant manifold with
the same dynamics as the globally defined invariant foliation. This
ensures that we take into account all data when the low-order dynamics
is uncovered, and at the same time also produce a familiar structure,
which is the corresponding invariant manifold. As per remark \ref{rem:extrasimpleROM},
this process may be simplified at the possible cost of losing some
accuracy.

Another contribution of the paper is that we resolve the problem with
high-dimensional data, which requires the approximation of high-dimensional
functions, when invariant foliations are identified. We use polynomials
that have compressed tensor coefficients \cite{TensorApproxSurvey},
and whose complexity scales linearly, instead of combinatorially with
the underlying dimension. Compressed tensors in the hierarchical Tucker
format (HT) \cite{HackbuschKuhn2009} are amenable to singular value
decomposition \cite{GrasedyckSVD}, hence by calculating the singular
values we can check the accuracy of the approximation. In addition,
compressed tensors can be truncated if some singular values are near
zero without losing their accuracy. However for our purpose, the most
important property is that within an optimisation framework a HT tensor
is linear in each of its parameter matrices (when the rest of the
parameters are fixed), which gives us a convex cost function. Indeed,
when solving the invariance equation of the foliation, we use a block
coordinate descent method, the Gauss-Southwell scheme \cite{GaussSouthwell2015},
which significantly speeds up the solution process. We also note that
optimisation of each coefficient matrix of a HT tensor is constrained
to a matrix manifold, hence we use a trust-region method designed
for matrix manifolds \cite{boumal2022intromanifolds,conn2000trust}
when solving for individual matrix components.

A ROM is usually represented in a nonlinear coordinate system, hence
the quantities predicted by the ROM may not behave the same way as
in Euclidean frames. This is particularly true for instantaneous frequencies
and damping ratios of decaying vibrations. In section \ref{sec:freq-damp}
we take the distortion of the nonlinear coordinate system into account
and derive correct values of instantaneous frequencies and damping
ratios.

The structure of the paper is as follows. We first discuss the type
of data assumed for ROM identification. Then we define what a ROM
is and go through all possible connections between a data producing
system and a ROM. We then classify the uncovered conceptual connections
along two properties: whether they are applicable to off-line data
or produce genuine ROMs. Next, we discuss instantaneous frequencies
and damping ratios in nonlinear frames. In section \ref{sec:ROM_id_processes}
we summarise the proposed and tested numerical algorithms, and describe
their implementation details. Finally, we discuss three example problems.
We start with a conceptual model that illustrates the non-applicability
of autoencoders to genuine model order reduction. We then use a nonlinear
ten-dimensional mathematical model to create synthetic data sets.
Here we demonstrate the accuracy of our method to full state-space
data, but also highlight problems with phase-space reconstruction
from scalar signals. Finally, we analyse vibration data from a jointed
beam, for which there is no accurate physical model due to the frictional
interface in the joint.

\subsection{Set-up}

The first step on our journey is to characterise the type of data
we are working with. We assume a real $n$-dimensional Euclidean space,
denoted by $X$ (a vector space with an inner product $\left\langle \cdot,\cdot\right\rangle _{X}:X\times X\to\mathbb{R}$),
which contains all our data. We further assume that the data is produced
by a deterministic process, which is represented by an unknown map
$\boldsymbol{F}:X\to X$. In particular, the data is organised into
$N\in\mathbb{N}^{+}$ pairs of vectors from space $X$, that is 
\[
\left(\boldsymbol{x}_{k},\boldsymbol{y}_{k}\right),\;k=1,\ldots,N
\]
that satisfy
\begin{equation}
\boldsymbol{y}_{k}=\boldsymbol{F}\left(\boldsymbol{x}_{k}\right)+\boldsymbol{\xi}_{k},\;k=1,\ldots,N,\label{eq:MAP-simple}
\end{equation}
where $\boldsymbol{\xi}_{k}\in X$ represents a small measurement
noise, which is sampled from a distribution with zero mean. Equation
(\ref{eq:MAP-simple}) describes pieces of trajectories if for some
$k\in\left\{ 1,\ldots,N\right\} $, $\boldsymbol{x}_{k+1}=\boldsymbol{y}_{k}$.
The state of the system can also be defined on a manifold, in which
case $X$ is chosen such that the manifold is embedded in $X$ according
to Whitney's embedding theorem \cite{WhitneyEmbedding1936}. It is
also possible that the state cannot be directly measured, in which
case Taken's delay embedding technique \cite{TakensEmbedding1981}
can be used to reconstruct the state, which we will do subsequently
in an optimal manner \cite{Casdagli1989DelayEmbed}. As a minimum,
we require that our system is observable \cite{Hermann1977NonlinearContrObs}.

For the purpose of this paper we also assume a fixed point at the
origin, such that $\boldsymbol{F}\left(\boldsymbol{0}\right)=\boldsymbol{0}$
and that the domain of $\boldsymbol{F}$ is a compact and connected
subset $G\subset X$ that includes a neighbourhood of the origin.

\section{Reduced order models}

We now describe a weak definition of a ROM, which only requires invariance.
There are two ingredients of a ROM: a function connecting vector space
$X$ to another vector space $Z$ of lower dimensionality and a map
on vector space $Z$. The connection can go two ways, either from
$Z$ to $X$ or from $X$ to $Z$. To make this more precise, we also
assume that $Z$ has an inner product $\left\langle \cdot,\cdot\right\rangle _{Z}:Z\times Z\to\mathbb{R}$
and that $\dim Z<\dim X$. The connection $\boldsymbol{U}:X\to Z$
is called an \emph{encoder} and the connection $\boldsymbol{W}:Z\to X$
is called a \emph{decoder}. We also assume that both the encoder and
the decoder are continuously differentiable and their Jacobians have
full rank. Our terminology is borrowed from computer science \cite{Kramer1991autoencoder},
but we can also use mathematical terms that calls $\boldsymbol{U}$
a manifold submersion \cite{Lawson1974} and $\boldsymbol{W}$ a manifold
immersion \cite{lang2012fundamentals}. In accordance with our assumption
that $\boldsymbol{F}\left(\boldsymbol{0}\right)=\boldsymbol{0}$,
we also assume that $\boldsymbol{U}\left(\boldsymbol{0}\right)=\boldsymbol{0}$
and $\boldsymbol{W}\left(\boldsymbol{0}\right)=\boldsymbol{0}$.
\begin{defn}
\label{def:ROM}Assume two maps $\boldsymbol{F}:X\to X$, $\boldsymbol{S}:Z\to Z$
and an encoder $\boldsymbol{U}:X\to Z$ or a decoder $\boldsymbol{W}:Z\to X$.
\begin{enumerate}
\item The encoder-map pair $\left(\boldsymbol{U},\boldsymbol{S}\right)$
is a \emph{ROM} of $\boldsymbol{F}$ if for all initial conditions
$\boldsymbol{x}_{0}\in G\subset X$, the trajectory $\boldsymbol{x}_{k+1}=\boldsymbol{F}\left(\boldsymbol{x}_{k}\right)$
and for initial condition $\boldsymbol{z}_{0}=\boldsymbol{U}\left(\boldsymbol{x}_{0}\right)$
the second trajectory $\boldsymbol{z}_{k+1}=\boldsymbol{S}\left(\boldsymbol{z}_{k}\right)$
are connected such that $\boldsymbol{z}_{k}=\boldsymbol{U}\left(\boldsymbol{x}_{k}\right)$
for all $k>0$.
\item The decoder-map pair $\left(\boldsymbol{W},\boldsymbol{S}\right)$
is a \emph{ROM} of $\boldsymbol{F}$ if for all initial conditions
$\boldsymbol{z}_{0}\in H=\left\{ \boldsymbol{z}\in Z:\boldsymbol{W}\left(z\right)\in G\right\} $,
the trajectory $\boldsymbol{z}_{k+1}=\boldsymbol{S}\left(\boldsymbol{z}_{k}\right)$
and for initial condition $\boldsymbol{x}_{0}=\boldsymbol{W}\left(\boldsymbol{z}_{0}\right)$
the second trajectory $\boldsymbol{x}_{k+1}=\boldsymbol{F}\left(\boldsymbol{x}_{k}\right)$
are connected such that $\boldsymbol{x}_{k}=\boldsymbol{W}\left(\boldsymbol{z}_{k}\right)$
for all $k>0$.
\end{enumerate}
\end{defn}
\begin{rem}
In essence, a ROM is a model whose trajectories are connected to the
trajectories of our system $\boldsymbol{F}$. We call this property
\emph{invariance.} It is possible to define a weaker ROM, where the
connections $\boldsymbol{z}_{k}=\boldsymbol{U}\left(\boldsymbol{x}_{k}\right)$
or $\boldsymbol{x}_{k}=\boldsymbol{W}\left(\boldsymbol{z}_{k}\right)$
are only approximate, which is not discussed here.
\end{rem}
The following corollary can be thought of as an equivalent definition
of a ROM.
\begin{cor}
The encoder-map pair $\left(\boldsymbol{U},\boldsymbol{S}\right)$
or decoder-map pair $\left(\boldsymbol{W},\boldsymbol{S}\right)$
is a ROM if and only if either invariance equation
\begin{align}
\boldsymbol{S}\left(\boldsymbol{U}\left(\boldsymbol{x}\right)\right) & =\boldsymbol{U}\left(\boldsymbol{F}\left(\boldsymbol{x}\right)\right),\quad\boldsymbol{x}\in G\quad\text{or}\label{eq:MAP-U-invariance}\\
\boldsymbol{W}\left(\boldsymbol{S}\left(\boldsymbol{z}\right)\right) & =\boldsymbol{F}\left(\boldsymbol{W}\left(\boldsymbol{z}\right)\right),\;\boldsymbol{z}\in H,\label{eq:MAP-W-invariance}
\end{align}
hold, where $H=\left\{ \boldsymbol{z}\in Z:\boldsymbol{W}\left(z\right)\in G\right\} $.
\end{cor}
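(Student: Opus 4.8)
The plan is to prove both directions by a straightforward induction on the trajectory index, handling the encoder and decoder cases in parallel. The first observation is that Definition~\ref{def:ROM} is stated purely in terms of matching trajectories, so the whole statement reduces to showing that the pointwise functional identities (\ref{eq:MAP-U-invariance}) and (\ref{eq:MAP-W-invariance}) are equivalent to those trajectory-level conditions.

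For the ``if'' direction in the encoder case, I would fix an arbitrary $\boldsymbol{x}_{0}\in G$, set $\boldsymbol{z}_{0}=\boldsymbol{U}\left(\boldsymbol{x}_{0}\right)$, and prove $\boldsymbol{z}_{k}=\boldsymbol{U}\left(\boldsymbol{x}_{k}\right)$ by induction on $k$. The base case $k=0$ holds by construction; for the inductive step, assuming $\boldsymbol{z}_{k}=\boldsymbol{U}\left(\boldsymbol{x}_{k}\right)$ and that $\boldsymbol{x}_{k}\in G$ (which is implicit in the iterate $\boldsymbol{x}_{k+1}=\boldsymbol{F}\left(\boldsymbol{x}_{k}\right)$ being defined), one computes $\boldsymbol{z}_{k+1}=\boldsymbol{S}\left(\boldsymbol{z}_{k}\right)=\boldsymbol{S}\left(\boldsymbol{U}\left(\boldsymbol{x}_{k}\right)\right)=\boldsymbol{U}\left(\boldsymbol{F}\left(\boldsymbol{x}_{k}\right)\right)=\boldsymbol{U}\left(\boldsymbol{x}_{k+1}\right)$, where the middle equality is exactly (\ref{eq:MAP-U-invariance}). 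The decoder case is identical with the roles of $X$ and $Z$, of $\boldsymbol{F}$ and $\boldsymbol{S}$, and of $\boldsymbol{U}$ and $\boldsymbol{W}$ interchanged, using (\ref{eq:MAP-W-invariance}) and noting that the condition $\boldsymbol{z}_{k}\in H$ is precisely what keeps $\boldsymbol{x}_{k}=\boldsymbol{W}\left(\boldsymbol{z}_{k}\right)$ in the domain $G$ of $\boldsymbol{F}$.

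For the ``only if'' direction I would exploit that the ROM property is required to hold for \emph{all} admissible initial conditions. Given any $\boldsymbol{x}\in G$, apply Definition~\ref{def:ROM} with $\boldsymbol{x}_{0}=\boldsymbol{x}$; the connection at $k=1$ reads $\boldsymbol{z}_{1}=\boldsymbol{U}\left(\boldsymbol{x}_{1}\right)$, and substituting $\boldsymbol{x}_{1}=\boldsymbol{F}\left(\boldsymbol{x}\right)$ and $\boldsymbol{z}_{1}=\boldsymbol{S}\left(\boldsymbol{z}_{0}\right)=\boldsymbol{S}\left(\boldsymbol{U}\left(\boldsymbol{x}\right)\right)$ yields (\ref{eq:MAP-U-invariance}) at the point $\boldsymbol{x}$; since $\boldsymbol{x}$ was arbitrary in $G$, the identity holds on all of $G$. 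The same one-step argument, now starting from $\boldsymbol{z}_{0}=\boldsymbol{z}\in H$ and using $\boldsymbol{x}_{1}=\boldsymbol{W}\left(\boldsymbol{z}_{1}\right)$, produces (\ref{eq:MAP-W-invariance}) in the decoder case.

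I do not expect a genuine obstacle here; the only point requiring care is the bookkeeping of domains, namely ensuring that each $\boldsymbol{x}_{k}$ stays in $G$ (respectively each $\boldsymbol{z}_{k}$ in $H$) so that the functional identities can be invoked along the whole orbit. This is already built into the hypotheses of Definition~\ref{def:ROM}, since the iteration $\boldsymbol{x}_{k+1}=\boldsymbol{F}\left(\boldsymbol{x}_{k}\right)$ is only meaningful while $\boldsymbol{x}_{k}$ lies in the domain of $\boldsymbol{F}$, so no additional assumption is needed and the induction closes without further hypotheses.
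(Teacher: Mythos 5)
Your proposal is correct and follows essentially the same route as the paper: the ``if'' direction by iterating the invariance equation along the trajectory (the paper substitutes $\boldsymbol{x}=\boldsymbol{F}^{k}\left(\boldsymbol{x}_{0}\right)$, which is your induction in compressed form), and the ``only if'' direction by the one-step argument at $k=1$, with the decoder case obtained by swapping $\boldsymbol{F}$, $\boldsymbol{S}$ and replacing $\boldsymbol{U}$ with $\boldsymbol{W}$. Your explicit attention to the domain bookkeeping ($\boldsymbol{x}_{k}\in G$, $\boldsymbol{z}_{k}\in H$) is slightly more careful than the paper's write-up but does not change the argument.
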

\begin{proof}
Let us assume that (\ref{eq:MAP-U-invariance}) holds, and choose
an $\boldsymbol{x}_{0}\in X$ and let $\boldsymbol{z}_{0}=\boldsymbol{U}\left(\boldsymbol{x}_{0}\right)$.
First we check whether $\boldsymbol{z}_{k}=\boldsymbol{U}\left(\boldsymbol{x}_{k}\right)$,
if $\boldsymbol{x}_{k}=\boldsymbol{F}^{k}\left(\boldsymbol{x}_{0}\right)$,
$\boldsymbol{z}_{k}=\boldsymbol{S}^{k}\left(\boldsymbol{z}_{0}\right)$
and (\ref{eq:MAP-U-invariance}) hold. Substituting $\boldsymbol{x}=\boldsymbol{F}^{k}\left(\boldsymbol{x}_{0}\right)$
into (\ref{eq:MAP-U-invariance}) yields 
\begin{align*}
\boldsymbol{S}^{k}\left(\boldsymbol{z}_{0}\right) & =\boldsymbol{U}\left(\boldsymbol{F}^{k}\left(\boldsymbol{x}_{0}\right)\right)\\
\boldsymbol{z}_{k} & =\boldsymbol{U}\left(\boldsymbol{x}_{k}\right).
\end{align*}
Now in reverse, assuming that $\boldsymbol{z}_{k}=\boldsymbol{U}\left(\boldsymbol{x}_{k}\right)$,
$\boldsymbol{z}_{k}=\boldsymbol{S}^{k}\left(\boldsymbol{z}_{0}\right)$,
$\boldsymbol{x}_{k}=\boldsymbol{F}^{k}\left(\boldsymbol{x}_{0}\right)$
and setting $k=1$ yields that 
\begin{align*}
\boldsymbol{z}_{1}=\boldsymbol{S}\left(\boldsymbol{z}_{0}\right) & =\boldsymbol{U}\left(\boldsymbol{x}_{1}\right),\\
\boldsymbol{S}\left(\boldsymbol{U}\left(\boldsymbol{x}_{0}\right)\right) & =\boldsymbol{U}\left(\boldsymbol{F}\left(\boldsymbol{x}_{0}\right)\right),
\end{align*}
which is true for all $\boldsymbol{x}_{k-1}\in G$, hence (\ref{eq:MAP-U-invariance})
holds. The proof for the decoder is identical, except that we swap
$\boldsymbol{F}$ and $\boldsymbol{S}$ and replace $\boldsymbol{U}$
with $\boldsymbol{W}.$
\end{proof}
\begin{figure}[H]
\begin{centering}
\begin{center}
{\normalsize
a)
\begin{tikzcd} \tikz \node[draw,circle]{\textcolor{blue}{$X$}}; \arrow[r, dashed, "\boldsymbol{F}"] \arrow[d, "\boldsymbol{U}"] & X \arrow[d, dashed, "\boldsymbol{U}"]  \\ 
Z \arrow[r, "\boldsymbol{S}"]& \tikz \node[draw,rectangle]{\textcolor{red}{$Z$}}; \end{tikzcd}
b) 
\begin{tikzcd} X \arrow[r, dashed, "\boldsymbol{F}"] & \tikz \node[draw,rectangle]{\textcolor{red}{$X$}};   \\ 
\tikz \node[draw,circle]{\textcolor{blue}{$Z$}}; \arrow[r, "\boldsymbol{S}"]  \arrow[u,  dashed, "\boldsymbol{W}"]& Z \arrow[u, "\boldsymbol{W}"] \end{tikzcd}\\
c)\begin{tikzcd} \tikz \node[draw,circle]{\textcolor{blue}{$X$}}; \arrow[r, dashed, "\boldsymbol{F}"] \arrow[d, "\boldsymbol{U}"] & \tikz \node[draw,rectangle]{\textcolor{red}{$X$}}; \\
Z \arrow[r, "\boldsymbol{S}"]                             & Z \arrow[u, "\boldsymbol{W}"]                             \end{tikzcd} d)
\begin{tikzcd} X \arrow[r, dashed,  "\boldsymbol{F}"]                              & X  \arrow[d, dashed, "\boldsymbol{U}"]                             \\ 
\tikz \node[draw,circle]{\textcolor{blue}{$Z$}}; \arrow[r, "\boldsymbol{S}"] \arrow[u, dashed, "\boldsymbol{W}"] & \tikz \node[draw,rectangle]{\textcolor{red}{$Z$}}; \end{tikzcd}%\\
% this is to illustrate with the inverse
%e)\begin{tikzcd} X;  \arrow[d, "\boldsymbol{U}"] & X \arrow[l, dashed, "\boldsymbol{F}^{-1}"'] \\
%\boxed{\textcolor{red}{Z}}                              & \tikz \node[draw,circle]{\textcolor{blue}{$Z$}}; \arrow[l, "\boldsymbol{S}^{-1}"'] \arrow[u, "\boldsymbol{W}"]                             \end{tikzcd} 
% f)\begin{tikzcd} \tikz \node[draw,rectangle]{\textcolor{red}{$X$}}; & \tikz \node[draw,circle]{\textcolor{blue}{$X$}}; %arrow[l,  dashed, "\boldsymbol{F}^{-1}"'] \arrow[d, "\boldsymbol{U}"]                             \\ 
%Z \arrow[u, "\boldsymbol{W}"] & Z \arrow[l, "\boldsymbol{S}^-1"] \end{tikzcd}
}
\par\end{center}
\par\end{centering}
\caption{\label{fig:straight-commutative}Commutative diagrams of a) invariant
foliations, b) invariant manifolds, and connection diagrams of c)
autoencoders and d) equation-free models. The dashed arrows denote
the chain of function composition(s) that involves $\boldsymbol{F}$,
the continuous arrows denote the chain of function composition(s)
that involves map $\boldsymbol{S}$. The encircled vector space denotes
the domain of the invariance equation and the boxed vector space is
the target of the invariance equation.}
\end{figure}
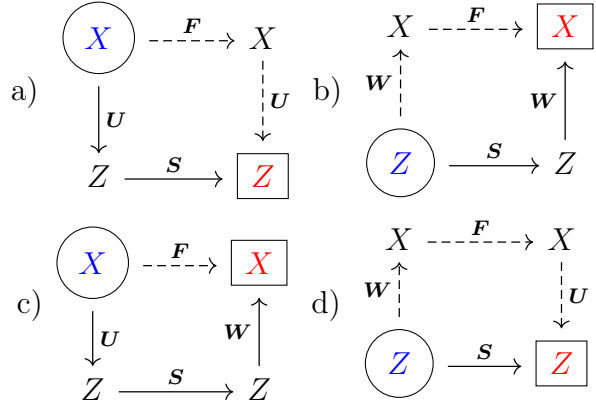

Now the question arises if we can use a combination of an encoder
and a decoder? This is the case of the autoencoder or nonlinear principal
component analysis \cite{Kramer1991autoencoder} and the equation-free
model \cite{Kevrekidis2003,Samey}. Indeed, there are four combinations
of encoders and decoders, which are depicted in diagrams \ref{fig:straight-commutative}(a,b,c,d).
We name these four scenarios as follows.
\begin{defn}
\label{def:Foil-Manif-AEnc}We call the connections displayed in figures
\ref{fig:straight-commutative}(a,b,c,d), invariant foliation, invariant
manifold, autoencoder and equation-free model, respectively.
\end{defn}
\begin{figure}
\begin{centering}
\includegraphics{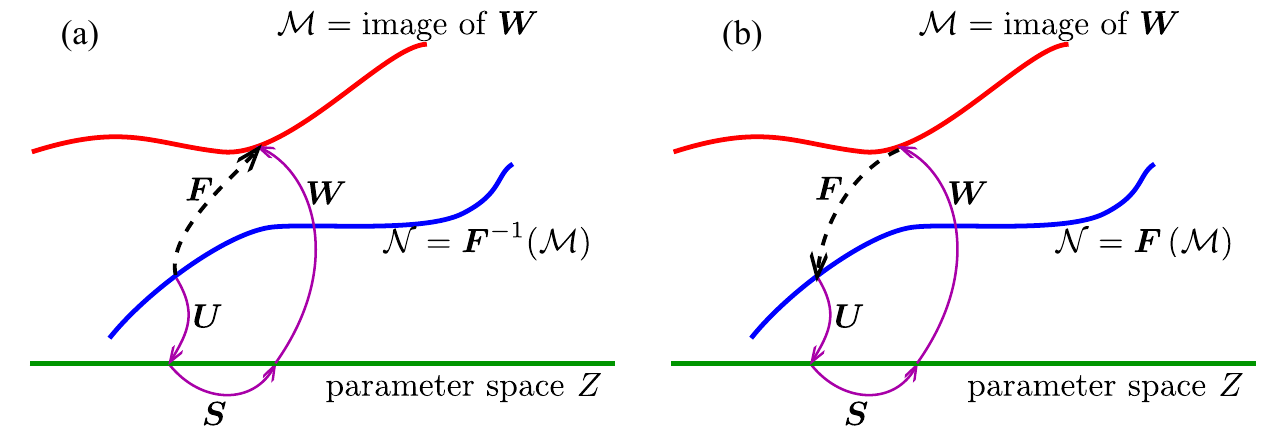}
\par\end{centering}
\caption{\label{fig:autoencoder}(a) Autoencoder. Encoder $\boldsymbol{U}$
maps data to parameter space $Z$, then the low-order map $\boldsymbol{S}$
takes the parameter forward in time and finally the decoder $\boldsymbol{W}$
brings the new parameter back to the state space. This chain of maps
denoted by solid purple arrows must be the same as map $\boldsymbol{F}$
denoted by the dashed arrow. The two results can only match between
two manifolds $\mathcal{M}$ and $\mathcal{N}$ and not elsewhere
in the state space. Manifold $\mathcal{M}$ is invariant if and only
if $\mathcal{M}\subset\mathcal{N}$, which is not guaranteed by this
construction. (b) Equation-free model. The only difference from the
autoencoder is the reversal of the arrow under $\boldsymbol{F}$,
hence $\boldsymbol{S}\left(\boldsymbol{z}\right)=\boldsymbol{U}\left(\boldsymbol{F}\left(\boldsymbol{W}\left(\boldsymbol{z}\right)\right)\right)$.}
\end{figure}

As we walk through the dashed and solid arrows in diagram \ref{fig:straight-commutative}(a),
we find the two sides of the invariance equation (\ref{eq:MAP-U-invariance}).
If we do the same for diagram \ref{fig:straight-commutative}(b),
we find equation (\ref{eq:MAP-W-invariance}). This implies that invariant
foliations and invariant manifolds are ROMs. The same is not true
for autoencoders and equation-free models. Reading off diagram \ref{fig:straight-commutative}(c),
the autoencoder must satisfy
\begin{equation}
\boldsymbol{W}\left(\boldsymbol{S}\left(\boldsymbol{U}\left(\boldsymbol{x}\right)\right)\right)=\boldsymbol{F}\left(\boldsymbol{x}\right).\label{eq:MAP-AE-noninvar}
\end{equation}
Equation (\ref{eq:MAP-AE-noninvar}) is depicted in figure \ref{fig:autoencoder}(a).
Since the decoder $\boldsymbol{W}$ maps onto a manifold $\mathcal{M}$,
equation (\ref{eq:MAP-AE-noninvar}) can only hold if $\boldsymbol{x}$
is chosen from the preimage of $\mathcal{M}$, that is, $\boldsymbol{x}\in\mathcal{N}=\boldsymbol{F}^{-1}\left(\mathcal{M}\right)$.
For invariance, we need $\boldsymbol{F}\left(\mathcal{M}\right)\subset\mathcal{M}$,
which is the same as $\mathcal{M}\subset\mathcal{N}$ if $\boldsymbol{F}$
is invertible. However, the inclusion $\mathcal{M}\subset\mathcal{N}$
is not guaranteed by equation (\ref{eq:MAP-AE-noninvar}). The only
way to guarantee $\mathcal{M}\subset\mathcal{N}$, is by stipulating
that the function composition $\boldsymbol{W}\circ\boldsymbol{U}$
is the identity map on the data. A trivial case is when $\dim Z=\dim X$
or, in general, when all data fall onto a $\dim Z$ dimensional submanifold
of $X$. Indeed, the standard way to find an autoencoder \cite{Kramer1991autoencoder}
is to solve
\begin{equation}
\arg\min_{\boldsymbol{U},\boldsymbol{W}}\sum_{k=1}^{N}\left\Vert \boldsymbol{W}\left(\boldsymbol{U}\left(\boldsymbol{x}_{k}\right)\right)-\boldsymbol{x}_{k}\right\Vert ^{2}.\label{eq:AE-find}
\end{equation}
Unfortunately, if the data is not on a $\dim Z$ dimensional submanifold
of $X$, which is the case of genuine ROMs, the minimum of $\sum_{k=1}^{N}\left\Vert \boldsymbol{W}\left(\boldsymbol{U}\left(\boldsymbol{x}_{k}\right)\right)-\boldsymbol{x}_{k}\right\Vert ^{2}$
will be far from zero and the location of $\mathcal{M}$ as the solution
of (\ref{eq:AE-find}) will only indicate where the data is in the
state space. It is customary to seek a solution to (\ref{eq:AE-find})
under the normalising condition that $\boldsymbol{U}\circ\boldsymbol{W}$
is the identity and hence $\boldsymbol{S}=\boldsymbol{U}\circ\boldsymbol{F}\circ\boldsymbol{W}$.

The equation-free model in diagram \ref{fig:straight-commutative}(d)
is identical to the autoencoder \ref{fig:straight-commutative}(c)
if we replace $\boldsymbol{F}$ with $\boldsymbol{F}^{-1}$ and $\boldsymbol{S}$
with $\boldsymbol{S}^{-1}$. Reading off diagram \ref{fig:straight-commutative}(d),
the equation-free model must satisfy
\begin{equation}
\boldsymbol{S}\left(\boldsymbol{z}\right)=\boldsymbol{U}\left(\boldsymbol{F}\left(\boldsymbol{W}\left(\boldsymbol{z}\right)\right)\right).\label{eq:MAP-RAE-noninvar}
\end{equation}
Equation (\ref{eq:MAP-RAE-noninvar}) immediately provides $\boldsymbol{S}$,
for any $\boldsymbol{U},$ $\boldsymbol{W}$, without identifying
any structure in the data.

Only invariant foliations, through equation (\ref{eq:MAP-U-invariance})
and autoencoders through equations (\ref{eq:MAP-AE-noninvar}) and
(\ref{eq:AE-find}) can be fitted to off-line data. Invariant manifolds
and equation-free models require the ability the manipulate the input
of our system $\boldsymbol{F}$ during the identification process.
Indeed, for off-line data, produced by equation (\ref{eq:MAP-simple}),
the foliation invariance equation (\ref{eq:MAP-U-invariance}) turns
into an optimisation problem
\begin{equation}
\arg\min_{\boldsymbol{S},\boldsymbol{U}}\sum_{k=1}^{N}\left\Vert \boldsymbol{x}_{k}\right\Vert ^{-2}\left\Vert \boldsymbol{S}\left(\boldsymbol{U}\left(\boldsymbol{x}_{k}\right)\right)-\boldsymbol{U}\left(\boldsymbol{y}_{k}\right)\right\Vert ^{2},\label{eq:MAP-U-optim}
\end{equation}
and the autoencoder equation (\ref{eq:MAP-AE-noninvar}) turns into
\begin{equation}
\arg\min_{\boldsymbol{S},\boldsymbol{U},\boldsymbol{W}}\sum_{k=1}^{N}\left\Vert \boldsymbol{x}_{k}\right\Vert ^{-2}\left\Vert \boldsymbol{W}\left(\boldsymbol{S}\left(\boldsymbol{U}\left(\boldsymbol{x}_{k}\right)\right)\right)-\boldsymbol{y}_{k}\right\Vert ^{2}.\label{eq:MAP-AE-optim}
\end{equation}
For the optimisation problem (\ref{eq:MAP-U-optim}) to have a unique
solution, we need to apply a constraint to $\boldsymbol{U}$. One
possible constraint is explained in remark \ref{rem:U-constraint}.
In case of the autoencoder (\ref{eq:MAP-AE-optim}), the constraint,
in addition to the one restricting $\boldsymbol{U}$, can be that
$\boldsymbol{U}\circ\boldsymbol{W}$ is the identity, as also stipulated
in \cite{Cenedese2022NatComm}.

\subsection{\label{subsec:FoliationsManifolds}Invariant foliations and invariant
manifolds}

\begin{figure}
\begin{centering}
\includegraphics[width=0.49\linewidth]{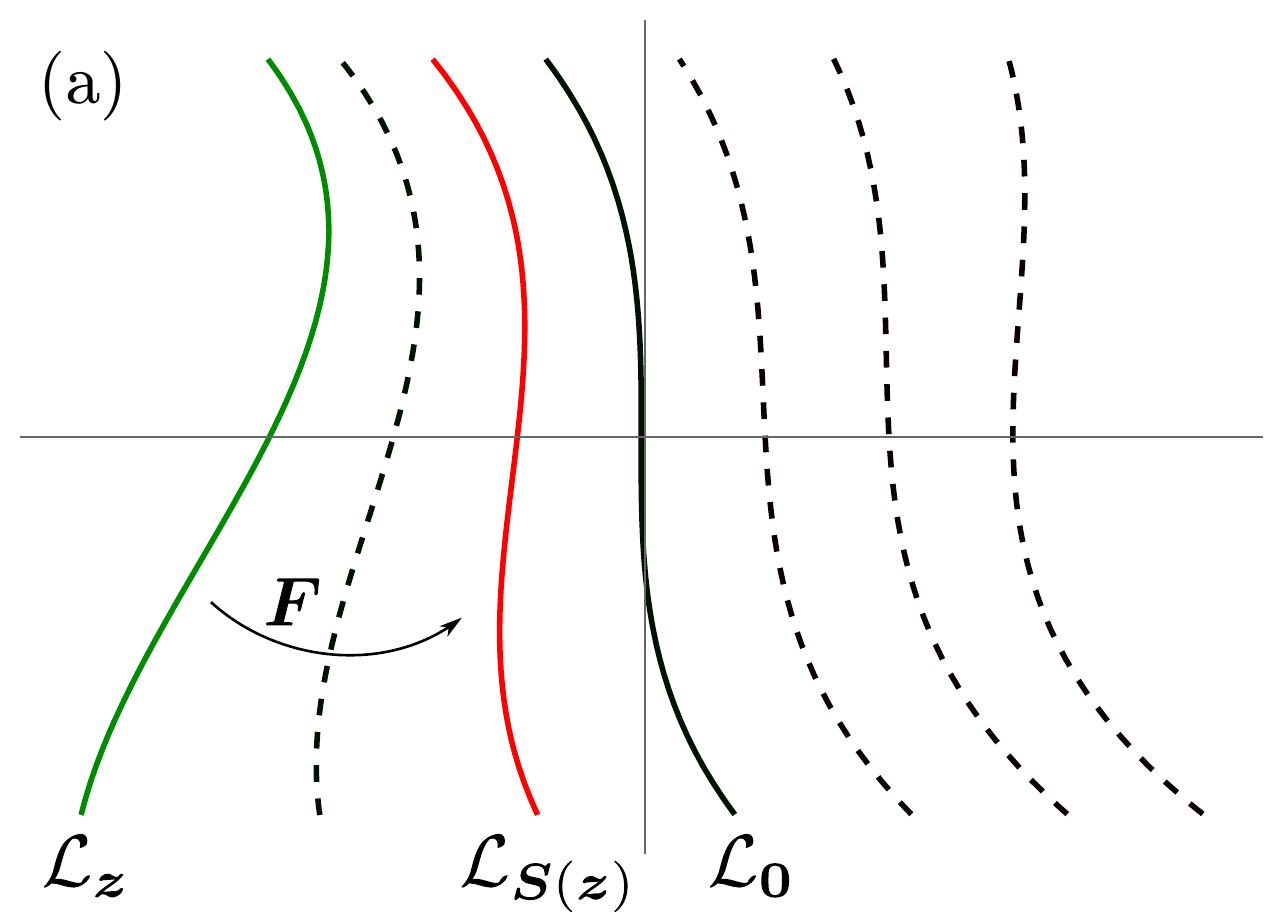}\includegraphics[width=0.49\linewidth]{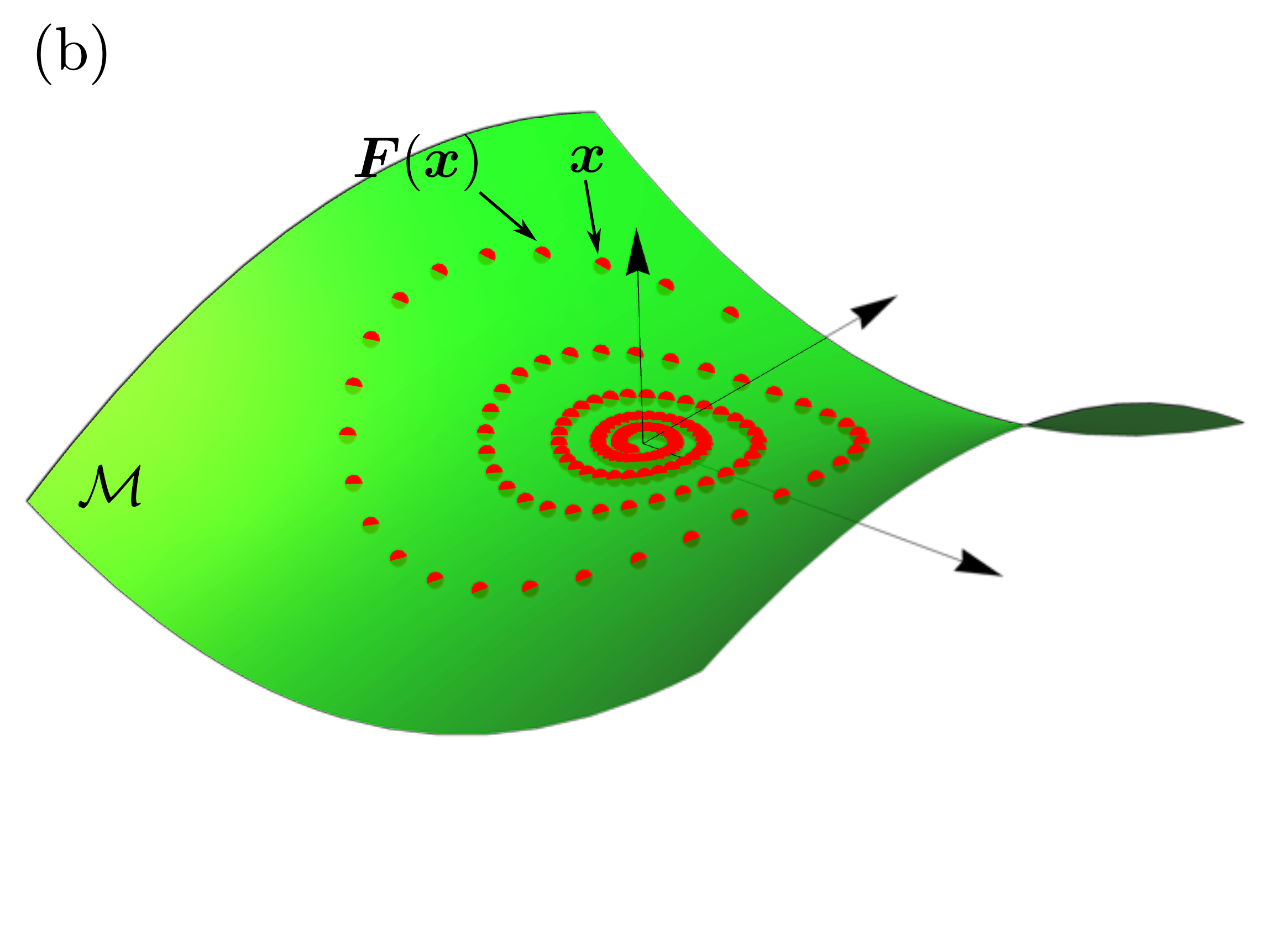}
\par\end{centering}
\caption{\label{fig:manifold-foliation}(a) Invariant foliation: a leaf $\mathcal{L}_{\boldsymbol{z}}$
is being mapped onto leaf $\mathcal{L}_{\boldsymbol{S}\left(\boldsymbol{z}\right)}$.
Since the origin $\boldsymbol{0}\in Z$ is a steady state of $\boldsymbol{S}$,
the leaf $\mathcal{L}_{\boldsymbol{0}}$ is mapped into itself and
therefore it is an invariant manifold. (b) Invariant manifold: a single
trajectory, represented by red dots, starting from the invariant manifold
$\mathcal{M}$ remains on the green invariant manifold.}
\end{figure}
An encoder represents a family of manifolds, called \emph{foliation},
which is the set of constant level surfaces of $\boldsymbol{U}$.
A single level surface is called a \emph{leaf} of the foliation, hence
the foliation is a collection of leaves. In mathematical terms a leaf
with parameter $\boldsymbol{z}\in Z$ is denoted by
\begin{equation}
\mathcal{L}_{\boldsymbol{z}}=\left\{ \boldsymbol{x}\in G\subset X:\boldsymbol{U}\left(\boldsymbol{x}\right)=\boldsymbol{z}\right\} .\label{eq:MAP-leaf-def}
\end{equation}
All leaves are $\dim X-\dim Z$ dimensional differentiable manifolds,
because we assumed that the Jacobian $D\boldsymbol{U}$ has full rank
\cite{Lawson1974}. The collection of all leaves is a \emph{foliation,}
denoted by 
\[
\mathcal{F}=\left\{ \mathcal{L}_{\boldsymbol{z}}:\boldsymbol{z}\in H\right\} ,
\]
where $H=\boldsymbol{U}\left(G\right)$. The foliation is characterised
by its co-dimension, which is the same as $\dim Z$. Invariance equation
(\ref{eq:MAP-U-invariance}) means that each leaf of the foliation
is mapped onto another leaf, in particular the leaf with parameter
$\boldsymbol{z}$ is mapped onto the leaf with parameter $\boldsymbol{S}\left(\boldsymbol{z}\right)$,
that is 
\[
\boldsymbol{F}\left(\mathcal{L}_{\boldsymbol{z}}\right)\subset\mathcal{L}_{\boldsymbol{S}\left(\boldsymbol{z}\right)}.
\]
Due to our assumptions, leaf $\mathcal{L}_{\boldsymbol{0}}$ is an
invariant manifold, because $\boldsymbol{F}\left(\mathcal{L}_{\boldsymbol{0}}\right)\subset\mathcal{L}_{\boldsymbol{0}}$.
This geometry is illustrated in figure \ref{fig:manifold-foliation}(a).

We now characterise the existence and uniqueness of invariant foliations
about a fixed point. We assume that $\boldsymbol{F}$ is a $C^{r}$,
$r\ge2$ map and that the Jacobian matrix $D\boldsymbol{F}\left(\boldsymbol{0}\right)$
has eigenvalues $\mu_{1},\ldots,\mu_{n}$ such that $\left|\mu_{i}\right|<1$,
$i=1,\ldots,n$. To select the invariant manifold or foliation we
assume two $\nu$-dimensional linear subspaces $E$ of $X$ and $E^{\star}$
of $X$ corresponding to eigenvalues $\mu_{1},\ldots,\mu_{\nu}$ such
that $D\boldsymbol{F}\left(\boldsymbol{0}\right)E\subset E$ and for
the adjoint map $\left(D\boldsymbol{F}\left(\boldsymbol{0}\right)\right)^{\star}E^{\star}\subset E^{\star}$.
\begin{defn}
The number
\[
\beth_{E^{\star}}=\frac{\min_{k=1\ldots\nu}\log\left|\mu_{k}\right|}{\max_{k=1\ldots n}\log\left|\mu_{k}\right|}
\]
is called the spectral quotient of the left-invariant linear subspace
$E^{\star}$ of $\boldsymbol{F}$ about the origin.
\end{defn}
\begin{thm}
\label{thm:MapFoliation}Assume that $D\boldsymbol{F}\left(\boldsymbol{0}\right)$
is semisimple and that there exists an integer $\sigma\ge2$, such
that $\beth_{E^{\star}}<\sigma\le r$. Also assume that
\begin{equation}
\prod_{k=1}^{n}\mu_{k}^{m_{k}}\neq\mu_{j},\;j=1,\ldots,\nu\label{eq:MapFoliationNonResonance}
\end{equation}
for all $m_{k}\ge0$, $1\le k\le n$ with at least one $m_{l}\neq0$,
$\nu+1\le l\le n$ and with $\sum_{k=0}^{n}m_{k}\le\sigma-1$. Then
in a sufficiently small neighbourhood of the origin there exists an
invariant foliation $\mathcal{F}$ tangent to the left-invariant linear
subspace $E^{\star}$ of the $C^{r}$ map $\boldsymbol{F}$. The foliation
$\mathcal{F}$ is unique among the $\sigma$ times differentiable
foliations and it is also $C^{r}$ smooth.
\end{thm}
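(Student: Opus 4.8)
The plan is to construct the invariant foliation as a graph over the left-invariant subspace $E^\star$ and solve the invariance equation (\ref{eq:MAP-U-invariance}) by a combination of formal normal-form reduction (to kill the low-order terms) and a contraction-mapping/fixed-point argument (to obtain the smooth remainder). Since this is the ``dual'' object to the classical invariant-manifold theory, I expect the argument to parallel the proofs of the parametrisation method of Cabr\'e--Fontich--de la Llave and the foliation results in \cite{delaLlave1997,CabreLlave2003}, but with $\boldsymbol{U}$ an encoder (submersion) rather than $\boldsymbol{W}$ an immersion.

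\emph{Step 1: Linear part and splitting.} Choose coordinates adapted to the spectral splitting: write $X = E^\star \oplus E^{\star\perp}$ (or the $D\boldsymbol{F}(\boldsymbol 0)$-invariant complement), and set $A = D\boldsymbol{F}(\boldsymbol 0)$. Since $A$ is semisimple, in suitable coordinates $A$ is block-diagonal with an ``inner'' block $A_1$ carrying eigenvalues $\mu_1,\dots,\mu_\nu$ and an ``outer'' block $A_2$ carrying $\mu_{\nu+1},\dots,\mu_n$. The linearisation of (\ref{eq:MAP-U-invariance}) is $\boldsymbol{S}'(\boldsymbol 0)\,D\boldsymbol{U}(\boldsymbol 0) = D\boldsymbol{U}(\boldsymbol 0)\,A$; we take $D\boldsymbol{U}(\boldsymbol 0)$ to be the projection onto $E^\star$ along the complement, so that $\boldsymbol{S}'(\boldsymbol 0) = A_1$ and the foliation is tangent to $E^\star$ as required (its leaves tangent to the complementary kernel of the projection).

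\emph{Step 2: Formal solution up to order $\sigma-1$.} Seek $\boldsymbol{U} = \pi_{E^\star} + \boldsymbol{u}$ and $\boldsymbol{S} = A_1 + \boldsymbol{s}$ with $\boldsymbol{u},\boldsymbol{s}$ of order $\ge 2$, and solve (\ref{eq:MAP-U-invariance}) order by order in the Taylor expansion. At each multidegree the equation for the unknown coefficient is a linear ``homological'' equation whose operator has the form $c \mapsto A_1 c - c\,(A^{\otimes m})$ acting on the relevant tensor space; it is invertible exactly when $\mu_j \neq \prod_k \mu_k^{m_k}$, which is precisely the non-resonance hypothesis (\ref{eq:MapFoliationNonResonance}) — note the hypothesis only restricts multi-indices with some $m_l\neq 0$ for $l>\nu$, since the purely ``inner'' monomials can be absorbed into $\boldsymbol{S}$ (this is the usual freedom in the choice of the reduced map, and is where one should be careful to state what is and is not pinned down). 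This determines $\boldsymbol{U}$ and $\boldsymbol{S}$ uniquely up to order $\sigma-1$ once a normalisation of the inner terms is fixed.

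\emph{Step 3: The smooth remainder by contraction.} Write $\boldsymbol{U} = \boldsymbol{U}_{<\sigma} + \boldsymbol{R}$ where $\boldsymbol{U}_{<\sigma}$ is the polynomial from Step 2 and $\boldsymbol{R}$ is a $C^r$ remainder vanishing to order $\sigma$ at the origin. Substituting into (\ref{eq:MAP-U-invariance}) and isolating the linear-in-$\boldsymbol{R}$ part yields a fixed-point equation $\boldsymbol{R} = \mathcal{T}(\boldsymbol{R})$ on a suitable Banach space of $C^\sigma$ (or $C^r$) maps on a small ball $B_\rho$, with the reduced map $\boldsymbol{S}$ eliminated via the formal relation from Step 2 (or kept fixed as $\boldsymbol{S}_{<\sigma}$ plus a controlled tail). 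The operator $\mathcal{T}$ involves composition with $\boldsymbol{F}$ on the outer variables and multiplication by $A_1^{-1}$ on the inner ones; the key estimate is that its Lipschitz constant on $B_\rho$ is governed by the ratio $\sup|\mu_j^{-1}|\cdot \|DF\|^{\sigma}$ restricted to the outer directions, which is $<1$ precisely because $\sigma > \beth_{E^\star}$, i.e. $\min_{k\le\nu}\log|\mu_k| > \sigma \max_{k\le n}\log|\mu_k|$ fails to hold only when — wait, one must track signs carefully since all $\log|\mu_k|<0$: the condition $\beth_{E^\star} < \sigma$ is exactly the gap condition making the contraction rate $|\mu_j|^{-1}|\mu|^{\sigma} < 1$ for all outer $\mu$ and inner $\mu_j$. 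Shrinking $\rho$ makes $\mathcal{T}$ a contraction on a ball of $C^\sigma$ maps; a standard bootstrap (Henry/de la Llave-style, using that $\boldsymbol{F}\in C^r$ and composition is bounded on $C^k$ spaces) upgrades the fixed point to $C^r$.

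\emph{Step 4: Uniqueness and geometry.} Uniqueness among $C^\sigma$ foliations tangent to $E^\star$ follows because any two solutions have identical Taylor jets to order $\sigma-1$ (Step 2 being forced) and then their difference is a fixed point of the same contraction $\mathcal{T}$, hence zero on $B_\rho$. Finally one checks $D\boldsymbol{U}$ has full rank near $\boldsymbol 0$ (true since it is a small perturbation of the projection $\pi_{E^\star}$), so the level sets are genuine $C^r$ leaves of dimension $\dim X - \nu$, and $\mathcal{L}_{\boldsymbol 0}$ is invariant as noted after (\ref{eq:MAP-leaf-def}).

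\emph{Main obstacle.} The delicate point is Step 3: setting up the right Banach space and weighted norms so that the composition operator is well-defined and its contraction constant is controlled sharply by the spectral gap, together with the $C^r$ bootstrap — one must handle the loss of derivatives in composition and the fact that $\boldsymbol{S}$ is itself an unknown coupled to $\boldsymbol{R}$. The bookkeeping of which resonances are genuinely obstructing (only the ``mixed'' ones in (\ref{eq:MapFoliationNonResonance})) versus which are absorbed into $\boldsymbol{S}$ is the other place where care is needed.
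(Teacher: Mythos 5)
The paper contains no in-line proof of Theorem \ref{thm:MapFoliation}: it defers entirely to \cite{Szalai2020ISF}, and your sketch follows essentially the same strategy used there --- formal solution of the homological equations, in which only the mixed resonances excluded by (\ref{eq:MapFoliationNonResonance}) obstruct and the purely inner terms are absorbed into $\boldsymbol{S}$, followed by a contraction argument for the order-$\sigma$ remainder governed by the spectral-quotient condition $\beth_{E^{\star}}<\sigma$, with a bootstrap to $C^{r}$ and a jet-plus-fixed-point uniqueness argument. One caveat on your Step 3: the contraction rate is $\max_{j\le\nu}\left|\mu_{j}\right|^{-1}\,\rho\!\left(D\boldsymbol{F}\left(\boldsymbol{0}\right)\right)^{\sigma}$, where $\rho$ is the spectral radius taken over \emph{all} $n$ eigenvalues (exactly as in the definition of $\beth_{E^{\star}}$), not merely over the outer directions, since composition with $\boldsymbol{F}$ acts on all variables of the remainder.
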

\begin{proof}
The proof is carried out in \cite{Szalai2020ISF}. Note that the assumption
of $D\boldsymbol{F}\left(\boldsymbol{0}\right)$ being semisimple
was used to simplify the proof in \cite{Szalai2020ISF}, therefore
it is unlikely to be needed.
\end{proof}
\begin{rem}
\label{rem:U-constraint}Theorem \ref{thm:MapFoliation} only concerns
the uniqueness of the foliation, but not the encoder $\boldsymbol{U}$.
However, for any smooth and invertible map $\boldsymbol{R}:Z\to Z$,
the encoder $\tilde{\boldsymbol{U}}=\boldsymbol{R}\circ\boldsymbol{U}$
represents the same foliation and the nonlinear map $\boldsymbol{S}$
transforms into $\tilde{\boldsymbol{S}}=\boldsymbol{R}\circ\boldsymbol{S}\circ\boldsymbol{R}^{-1}$.
If we want to solve the invariance equation (\ref{eq:MAP-U-invariance}),
we need to constrain $\boldsymbol{U}$. The simplest such constraint
is that 
\begin{equation}
\boldsymbol{U}\left(\boldsymbol{W}_{1}\boldsymbol{z}\right)=\boldsymbol{z},\label{eq:FOIL-graph}
\end{equation}
where $\boldsymbol{W}_{1}:Z\to X$ is a linear map with full rank
such that $E^{\star}\cap\ker\boldsymbol{W}_{1}^{\star}=\left\{ \boldsymbol{0}\right\} $.
To explain the meaning of equation (\ref{eq:FOIL-graph}), we note
that the image of $\boldsymbol{W}_{1}$ is a linear subspace $\mathcal{H}$
of $X$. Equation (\ref{eq:FOIL-graph}) therefore means that each
leaf $\mathcal{L}_{\boldsymbol{z}}$ must intersect subspace $\mathcal{H}$
exactly at parameter $\boldsymbol{z}$. The condition $E^{\star}\cap\ker\boldsymbol{W}_{1}^{\star}=\left\{ \boldsymbol{0}\right\} $
then means that the leaf $\mathcal{L}_{\boldsymbol{0}}$ has a transverse
intersection with subspace $\mathcal{H}$. This is similar to the
graph-style parametrisation of a manifold over a linear subspace.
\end{rem}
\begin{rem}
\label{rem:Koopman}Eigenvalues and eigenfunctions of the Koopman
operator \cite{Mezic2005,Mezic2021} are invariant foliations. Indeed,
the Koopman operator is defined as $\left(\mathcal{K}\boldsymbol{u}\right)\left(\boldsymbol{x}\right)=\boldsymbol{u}\left(\boldsymbol{F}\left(\boldsymbol{x}\right)\right)$.
If we assume that $\boldsymbol{U}=\left(\boldsymbol{u}_{1},\ldots,\boldsymbol{u}_{\nu}\right)$
is a collection of functions $\boldsymbol{u}_{j}:X\to\mathbb{R}$,
$Z=\mathbb{R}^{\nu}$, then $\boldsymbol{U}$ spans an invariant subspace
of $\mathcal{K}$ if there exists a linear map $\boldsymbol{S}$ such
that $\mathcal{K}\left(\boldsymbol{U}\right)=\boldsymbol{S}\boldsymbol{U}$.
Expanding this equation yields $\boldsymbol{U}\left(\boldsymbol{F}\left(\boldsymbol{x}\right)\right)=\boldsymbol{S}\boldsymbol{U}\left(\boldsymbol{x}\right)$,
which is the same as the invariance equation (\ref{eq:MAP-U-invariance}),
except that $\boldsymbol{S}$ is linear. The existence of linear map
$\boldsymbol{S}$ requires further non-resonance conditions, which
are
\begin{equation}
\prod_{k=1}^{\nu}\mu_{k}^{m_{k}}\neq\mu_{j},\;j=1,\ldots,\nu\label{eq:Internal-nonresonance}
\end{equation}
for all $m_{k}\ge0$ such that $\sum_{k=0}^{n}m_{k}\le\sigma-1$.
Equation (\ref{eq:Internal-nonresonance}) is referred to as the set
of \emph{internal non-resonance} conditions, because these are intrinsic
to the invariant subspace $E^{\star}$. In many cases $\boldsymbol{S}$
represents the slowest dynamics, hence even if there are no internal
resonances, the two sides of (\ref{eq:Internal-nonresonance}) will
be close to each other for some set of $m_{1},\ldots,m_{\nu}$ exponents
and that causes numerical issues leading to undesired inaccuracies.
We will illustrate this in section \ref{subsec:10dimsys-example}.
\end{rem}
Now we discuss invariant manifolds. A decoder $\boldsymbol{W}$ defines
a differentiable manifold
\[
\mathcal{M}=\left\{ \boldsymbol{W}\left(\boldsymbol{z}\right):\boldsymbol{z}\in H\right\} ,
\]
where $H=\left\{ \boldsymbol{z}\in Z:\boldsymbol{W}\left(\boldsymbol{z}\right)\in G\right\} $.
Invariance equation (\ref{eq:MAP-W-invariance}) is equivalent to
the geometric condition that $\boldsymbol{F}\left(\mathcal{M}\right)\subset\mathcal{M}$.
This geometry is shown in figure \ref{fig:manifold-foliation}(b),
which illustrates that if a trajectory is started on $\mathcal{M}$,
all subsequent points of the trajectory stay on $\mathcal{M}$. 

Invariant manifolds as a concept cannot be used to identify ROMs from
off-line data. As we will see below, invariant manifolds can still
be identified as a leaf of an invariant foliation, but not through
the invariance equation (\ref{eq:MAP-W-invariance}). Indeed, it is
not possible to guess the manifold parameter $\boldsymbol{z}\in Z$
from data. Introducing an encoder $\boldsymbol{U}:X\to Z$ to calculate
$\boldsymbol{z}=\boldsymbol{U}\left(\boldsymbol{x}\right)$, transforms
the invariant manifold into an autoencoder, which does not guarantee
invariance. 

We now state the conditions of the existence and uniqueness of an
invariant manifold.
\begin{defn}
The number 
\[
\aleph_{E}=\frac{\min_{k=\nu+1\ldots n}\log\left|\mu_{k}\right|}{\max_{k=1\ldots\nu}\log\left|\mu_{k}\right|}
\]
is called the spectral quotient of the right-invariant linear subspace
$E$ of map $\boldsymbol{F}$ about the origin.
\end{defn}
\begin{thm}
\label{thm:MapManifold}Assume that there exists an integer $\sigma\ge2$,
such that $\aleph_{E}<\sigma\le r$. Also assume that
\begin{equation}
\prod_{k=1}^{\nu}\mu_{k}^{m_{k}}\neq\mu_{j},\;j=\nu+1,\ldots,n\label{eq:MapManifNonResonance}
\end{equation}
for all $m_{k}\ge0$ such that $\sum_{k=0}^{\nu}m_{k}\le\sigma-1$.
Then in a sufficiently small neighbourhood of the origin there exists
an invariant manifold $\mathcal{\mathcal{M}}$ tangent to the invariant
linear subspace $E$ of the $C^{r}$ map $\boldsymbol{F}$. The manifold
$\mathcal{M}$ is unique among the $\sigma$-times differentiable
manifolds and it is also $C^{r}$ smooth.
\end{thm}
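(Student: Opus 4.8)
The plan is to deduce the existence and uniqueness of the invariant manifold $\mathcal{M}$ tangent to $E$ from Theorem \ref{thm:MapFoliation} by a duality argument, rather than re-running a separate invariant-manifold iteration. The observation is that an invariant manifold of $\boldsymbol{F}$ tangent to $E$ is, heuristically, the ``fixed leaf'' of an invariant foliation, so one should be able to transcribe the foliation result. More precisely, I would first record that the spectral quotient $\aleph_E$ for the manifold problem is exactly the spectral quotient $\beth_{E^{\star}}$ of the \emph{complementary} subspace when one formally swaps the roles of the $\nu$ chosen eigenvalues $\mu_1,\dots,\mu_\nu$ and the remaining $n-\nu$ eigenvalues, and that the manifold non-resonance condition \eqref{eq:MapManifNonResonance} is likewise the foliation non-resonance condition \eqref{eq:MapFoliationNonResonance} read with the two index blocks interchanged. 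This makes precise the sense in which the manifold theorem is ``dual'' to the foliation theorem.

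The cleanest route is then to invoke a graph-transform / Lyapunov–Perron argument directly, mirroring the one in \cite{Szalai2020ISF}, applied to $\boldsymbol{F}$ restricted to a small ball. First I would split coordinates as $X = E \oplus E^{c}$ with $\boldsymbol{x} = (\boldsymbol{u},\boldsymbol{v})$, write $\boldsymbol{F}$ in block form, and seek $\mathcal{M}$ as the graph $\boldsymbol{v} = \boldsymbol{h}(\boldsymbol{u})$ of a $C^r$ map $\boldsymbol{h}$ with $\boldsymbol{h}(\boldsymbol{0}) = \boldsymbol{0}$, $D\boldsymbol{h}(\boldsymbol{0}) = \boldsymbol{0}$. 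Invariance $\boldsymbol{F}(\mathcal{M}) \subset \mathcal{M}$ becomes the invariance equation $\boldsymbol{h}(\boldsymbol{F}_1(\boldsymbol{u},\boldsymbol{h}(\boldsymbol{u}))) = \boldsymbol{F}_2(\boldsymbol{u},\boldsymbol{h}(\boldsymbol{u}))$. One sets up the graph transform $\mathcal{G}$ on a suitable space of Lipschitz (then $C^k$) maps with small norm, shows it is a well-defined contraction on a small enough neighbourhood — here the spectral gap between the $\mu_k$, $k\le\nu$, and $\mu_k$, $k>\nu$, gives the contraction factor — and obtains a unique fixed point $\boldsymbol{h}$. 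Smoothness up to $C^\sigma$ is bootstrapped by differentiating the invariance equation and checking that the induced maps on jets are contractions precisely when $\aleph_E < \sigma$; the non-resonance conditions \eqref{eq:MapManifNonResonance} are what guarantee that the linear operators $\boldsymbol{v}\mapsto A_2 \boldsymbol{v} - (\text{Taylor term})\,A_1^{\otimes m}$ arising at each order are invertible, so the Taylor coefficients of $\boldsymbol{h}$ are uniquely determined; $C^r$ smoothness then follows from the standard fibre-contraction / Henry's lemma step. Uniqueness among $\sigma$-times differentiable manifolds follows because any such manifold is a fixed point of $\mathcal{G}$ in the same space, and the contraction has a unique fixed point.

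I expect the main obstacle to be purely expository rather than mathematical: stating the result so that it genuinely follows from Theorem \ref{thm:MapFoliation} (or its proof in \cite{Szalai2020ISF}) by the index-swap duality, without having to reproduce the full Lyapunov–Perron machinery. The subtlety is that the foliation theorem as stated assumes $D\boldsymbol{F}(\boldsymbol{0})$ semisimple and works with a \emph{left}-invariant subspace $E^{\star}$, whereas here we want a \emph{right}-invariant $E$ and no semisimplicity hypothesis; one must check that the graph-transform argument for the manifold does not need semisimplicity (it does not — that assumption was only a convenience in \cite{Szalai2020ISF}) and that the spectral quotient / non-resonance bookkeeping transfers verbatim. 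Given all that, the proof reduces to: invoke the proof of \cite{Szalai2020ISF} with $E^\star$ replaced by $E$ and the eigenvalue index blocks interchanged, note that $\beth$ becomes $\aleph_E$ and \eqref{eq:MapFoliationNonResonance} becomes \eqref{eq:MapManifNonResonance}, and conclude.
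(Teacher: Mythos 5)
Your route is necessarily different from the paper's, because the paper does not prove this theorem at all: it simply observes that the statement is a special case of Theorem~1.1 of \cite{CabreLlave2003} (the parameterization method), just as Theorem~\ref{thm:MapFoliation} is delegated to \cite{Szalai2020ISF}. So the question is whether your self-contained sketch would actually work, and there is a genuine gap in its central step. You claim that the graph transform $\mathcal{G}$ is a contraction on a space of Lipschitz (then $C^k$) graphs over $E$ and that uniqueness among $C^\sigma$ manifolds follows because any such manifold is a fixed point of this contraction. In the situation this theorem is designed for, $E$ is a \emph{slow} spectral subspace inside a fully contracting spectrum ($|\mu_i|<1$ for all $i$), so $\aleph_E>1$, and the naive graph transform fails there: applying $\boldsymbol{F}$ to a graph over a ball in $E$ produces a graph over a strictly smaller set (the forward dynamics contracts the base), so $\mathcal{G}$ is not a self-map of graphs over a fixed neighbourhood, while using $\boldsymbol{F}^{-1}$ expands the normal directions faster than the base and destroys the Lipschitz contraction. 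The usual fixes (cut-offs or extensions of $\boldsymbol{F}$) restore existence but destroy uniqueness, which is exactly why slow manifolds are \emph{not} unique at low regularity: already for a linear stable node there are infinitely many $C^k$ invariant curves tangent to the slow eigendirection for every $k\le\aleph_E$. Hence any argument that delivers a unique fixed point in a Lipschitz or $C^1$ class proves too much and cannot be correct; the hypothesis $\aleph_E<\sigma$ must enter the functional-analytic set-up itself (weighted Lyapunov--Perron norms, jet spaces with $o(|\boldsymbol{u}|^{\sigma-1})$ remainders, or the operator equation of the parameterization method), not appear only in an a posteriori smoothness bootstrap. Likewise, uniqueness among $C^\sigma$ manifolds is obtained by first matching Taylor expansions to order $\sigma-1$ using the non-resonance conditions \eqref{eq:MapManifNonResonance} and then running a contraction for the remainder in that restricted class --- which is precisely what Theorem~1.1 of \cite{CabreLlave2003} packages, and why the paper cites it instead of re-deriving it.

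The ``duality'' preamble is also only heuristic and you rightly do not rely on it: Theorem~\ref{thm:MapFoliation} concerns a foliation tangent to a left-invariant subspace $E^{\star}$, and swapping index blocks turns $\beth_{E^{\star}}$ into $\aleph_E$ and \eqref{eq:MapFoliationNonResonance} into \eqref{eq:MapManifNonResonance} only at the level of bookkeeping; there is no formal reduction of the manifold statement to the foliation statement in the paper, so this part cannot substitute for the missing analytic core. If you want a correct self-contained proof, replace the graph-transform paragraph by the parameterization-method argument (solve $\boldsymbol{F}\circ\boldsymbol{W}=\boldsymbol{W}\circ\boldsymbol{S}$ for a decoder $\boldsymbol{W}$ and polynomial $\boldsymbol{S}$, with the order-$(\sigma-1)$ jet fixed by \eqref{eq:MapManifNonResonance} and the remainder handled by a fixed point in a space of functions vanishing to order $\sigma$), or simply cite \cite{CabreLlave2003} as the paper does.
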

\begin{proof}
The theorem is a subset of theorem 1.1 in \cite{CabreLlave2003}.
\end{proof}
\begin{rem}
\label{rem:W-constraint}To calculate an invariant manifold with a
unique representation, we need to impose a constraint on $\boldsymbol{W}$
and/or $\boldsymbol{S}$. The simplest constraint is imposed by 
\begin{equation}
\boldsymbol{U}_{1}\boldsymbol{W}\left(\boldsymbol{z}\right)=\boldsymbol{z},\label{eq:MAP-W-graphstyle}
\end{equation}
where $\boldsymbol{U}_{1}:Z\to X$ is a linear map with full rank
such that $E\cap\ker\boldsymbol{U}_{1}^{\star}=\left\{ \boldsymbol{0}\right\} $.
This is similar to a graph-style parametrisation (akin to theorem
1.2 in \cite{CabreLlave2003}), where the range of $\boldsymbol{U}_{1}$
must span the linear subspace $E$. Constraint (\ref{eq:MAP-W-graphstyle})
can break down for large $\left\Vert \boldsymbol{z}\right\Vert $,
when $\boldsymbol{U}_{1}D\boldsymbol{W}\left(\boldsymbol{z}\right)$
does not have full rank. A globally suitable constraint is that $D\boldsymbol{W}^{\star}\left(\boldsymbol{z}\right)D\boldsymbol{W}\left(\boldsymbol{z}\right)=\boldsymbol{I}$.
\end{rem}
For linear subspaces $E$ and $E^{\star}$ with eigenvalues closest
to the complex unit circle (representing the slowest dynamics), $\beth_{E}=1$
and $\aleph_{E}$ is maximal. Therefore the foliation corresponding
to the slowest dynamics requires the least smoothness, while the invariant
manifold requires the maximum smoothness for uniqueness. 

Table \ref{tab:Comparison} summarises the main properties of the
three conceptually different model identification techniques. Ultimately,
in the presence of off-line data, only invariant foliations can be
fitted to the data and produce a ROM at the same time. 

\begin{table}
\begin{centering}
\begin{tabular}{|>{\centering}p{2.5cm}|>{\centering}p{2.5cm}|>{\centering}p{2.5cm}|>{\centering}p{2.5cm}|>{\centering}p{2.5cm}|}
\hline 
 & Invariant foliation & Invariant manifold & Autoencoder & Eq.-free model\tabularnewline
\hline 
\hline 
Usable closed-loop & YES & YES & YES & YES\tabularnewline
\hline 
Usable open-loop & YES & NO & YES & NO\tabularnewline
\hline 
Obtains a ROM & YES & YES & NO & NO\tabularnewline
\hline 
Uniqueness & slowest most unique & slowest least unique & NO & NO\tabularnewline
\hline 
References & \cite{Szalai2020ISF,Mezic2005,Roberts89,Roberts91} & \cite{ShawPierre,delaLlave1997,CabreLlave2003,CabreP3-2005,Haller2016,Szalai20160759,VIZZACCARO2021normalForm} & \cite{Cenedese2022NatComm,Champion2019Autoencoder,KaliaMeijerBrunton2021} & \cite{Kevrekidis2003,Samey}\tabularnewline
\hline 
\end{tabular}
\par\end{centering}
\caption{\label{tab:Comparison}Comparison of invariant foliations, invariant
manifolds and autoencoders. }
\end{table}

\subsection{\label{subsec:LocallyAccurateEncoder}Invariant manifolds represented
by locally defined invariant foliations}

As discussed before, we cannot fit invariant manifolds to data, instead
we can fit an invariant foliation that contains our invariant manifold
(which is the leaf containing the fixed point $\mathcal{L}_{\boldsymbol{0}}$
at the origin). This invariant foliation only needs to be defined
near the invariant manifold and therefore we can simplify the functional
representation of the encoder that defines the foliation. In this
section we discuss this simplification.

To begin with, assume that we already have an invariant foliation
$\mathcal{F}$ with an encoder $\boldsymbol{U}$ and nonlinear map
$\boldsymbol{S}$. Our objective is to find the invariant manifold
$\mathcal{M}$, represented by decoder $\boldsymbol{W}$ that has
the same dynamics $\boldsymbol{S}$ as the foliation. This is useful
if we want to know quantities that are only defined for invariant
manifolds, such as instantaneous frequencies and damping ratios. Formally,
we are looking for a simplified invariant foliation $\hat{\mathcal{F}}$
with encoder $\hat{\boldsymbol{U}}:X\to\hat{Z}$ that together with
$\mathcal{F}$ form a coordinate system in $X$. (Technically speaking,
$\left(\boldsymbol{U},\hat{\boldsymbol{U}}\right):X\to Z\times\hat{Z}$
must be a isomorphism.) In this case our invariant manifold is the
zero level surface of encoder $\hat{\boldsymbol{U}}$, i.e., $\mathcal{M}=\hat{\mathcal{L}}_{0}\in\hat{\mathcal{F}}$.
Naturally, we must have $\dim Z+\dim\hat{Z}=\dim X$.

\begin{figure}
\begin{centering}
\includegraphics[width=0.7\linewidth]{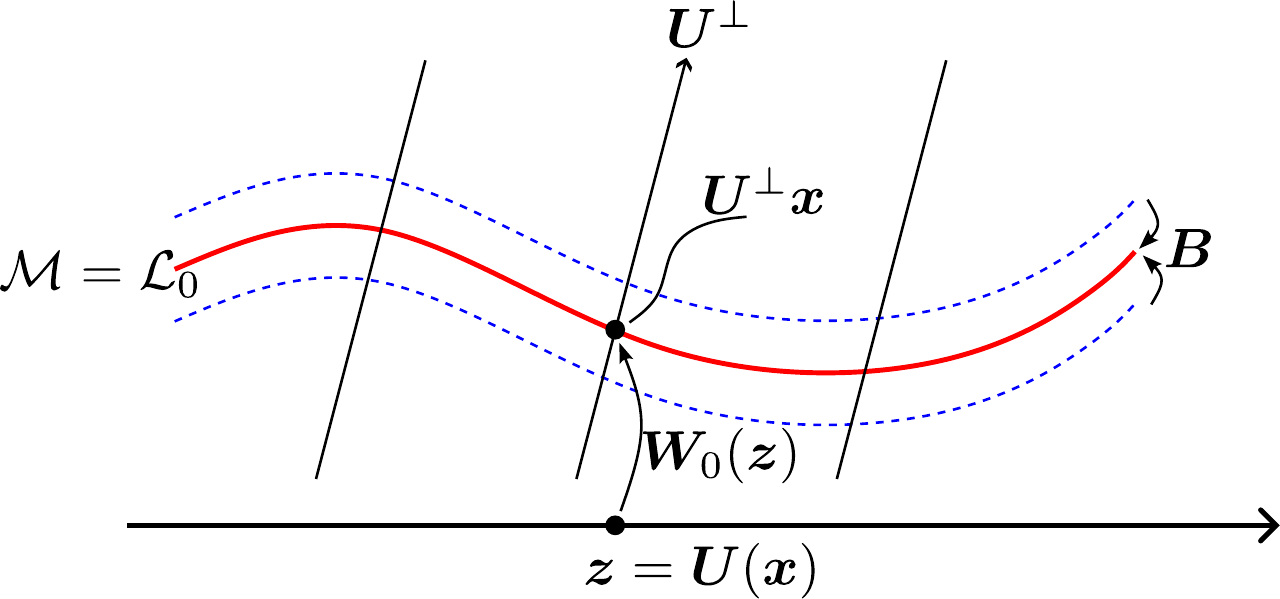}
\par\end{centering}
\caption{\label{fig:approx-foil}Approximate invariant foliation. The linear
map $\boldsymbol{U}^{\perp}$ and the nonlinear map $\boldsymbol{U}$
create a coordinate system, so that in this frame the invariant manifold
$\mathcal{M}$ is given by $\boldsymbol{U}^{\perp}\boldsymbol{x}=\boldsymbol{W}_{\boldsymbol{0}}\left(\boldsymbol{z}\right)$,
where $\boldsymbol{z}=\boldsymbol{U}\left(\boldsymbol{z}\right)$.
We then allow to shift manifold $\mathcal{M}$ in the $\boldsymbol{U}^{\perp}$
direction such that $\boldsymbol{U}^{\perp}\boldsymbol{x}-\boldsymbol{W}_{\boldsymbol{0}}\left(\boldsymbol{z}\right)=\hat{\boldsymbol{z}}$,
which creates a foliation parametrised by $\hat{\boldsymbol{z}}\in\hat{Z}$.
If this foliation satisfies the invariance equation in a neighbourhood
of $\mathcal{M}$ with a linear map $\boldsymbol{B}$ as per equation
(\ref{eq:Uhat-invariance}), then $\mathcal{M}$ is an invariant manifold.
In other words, nearby blue dashed leaves are mapped towards $\mathcal{M}$
by linear map $\boldsymbol{B}$ the same way as the underlying dynamics
does.}
\end{figure}

The sufficient condition that $\mathcal{F}$ and $\hat{\mathcal{F}}$
form a coordinate system locally about the fixed point is that the
square matrix
\[
\boldsymbol{Q}=\begin{pmatrix}D\boldsymbol{U}\left(\boldsymbol{0}\right)\\
D\hat{\boldsymbol{U}}\left(\boldsymbol{0}\right)
\end{pmatrix}
\]
is invertible. Let us represent our approximate (or locally defined)
encoder by
\begin{equation}
\hat{\boldsymbol{U}}\left(\boldsymbol{x}\right)=\boldsymbol{U}^{\perp}\boldsymbol{x}-\boldsymbol{W}_{0}\left(\boldsymbol{U}\left(\boldsymbol{x}\right)\right),\label{eq:Uhat-decoder}
\end{equation}
where $\boldsymbol{W}_{0}:Z\to\hat{Z}$ is a nonlinear map with $D\boldsymbol{W}_{0}\left(\boldsymbol{0}\right)=\boldsymbol{0}$,
$\boldsymbol{U}^{\perp}:X\to\hat{Z}$ is an orthogonal linear map,
$\boldsymbol{U}^{\perp}\left(\boldsymbol{U}^{\perp}\right)^{\star}=\boldsymbol{I}$
and $\boldsymbol{U}^{\perp}\boldsymbol{W}_{0}\left(\boldsymbol{z}\right)=\boldsymbol{0}$.
Here, the linear map $\boldsymbol{U^{\perp}}$ measures coordinates
in a transversal direction to the manifold and $\boldsymbol{W}_{0}$
prescribes where the actual manifold is along this transversal direction,
while $\boldsymbol{U}$ provides the parametrisation of the manifold.
All other leaves of the approximate foliation $\hat{\mathcal{F}}$
are shifted copies of $\mathcal{M}$ along the $\boldsymbol{U}^{\perp}$
direction as displayed in figure \ref{fig:approx-foil}. A locally
defined foliation means that $\hat{\boldsymbol{z}}=\hat{\boldsymbol{U}}\left(\boldsymbol{x}\right)\in\hat{Z}$
is assumed to be small, hence we can also assume linear dynamics among
the leaves of $\hat{\mathcal{F}}$, which is represented by a linear
operator $\boldsymbol{B}:\hat{Z}\to\hat{Z}$. Therefore the invariance
equation (\ref{eq:MAP-U-invariance}) becomes
\begin{equation}
\boldsymbol{B}\hat{\boldsymbol{U}}\left(\boldsymbol{x}\right)=\hat{\boldsymbol{U}}\left(\boldsymbol{F}\left(\boldsymbol{x}\right)\right).\label{eq:Uhat-invariance}
\end{equation}
Once\textbf{ $\boldsymbol{B}$}, $\boldsymbol{U}^{\perp}$ and $\boldsymbol{W}_{0}$
are found, the final step is to reconstruct the decoder $\boldsymbol{W}$
of our invariant manifold $\mathcal{M}$.
\begin{prop}
The decoder $\boldsymbol{W}$ of the invariant manifold $\mathcal{M}=\left\{ \boldsymbol{x}\in X:\hat{\boldsymbol{U}}\left(\boldsymbol{x}\right)=\boldsymbol{0}\right\} $
is the unique solution of the system of equations
\begin{equation}
\left.\begin{array}{rl}
\boldsymbol{U}\left(\boldsymbol{W}\left(\boldsymbol{z}\right)\right) & =\boldsymbol{z}\\
\boldsymbol{U}^{\perp}\boldsymbol{W}\left(\boldsymbol{z}\right) & =\boldsymbol{W}_{0}\left(\boldsymbol{z}\right)
\end{array}\right\} .\label{eq:Uhat-W-reconstruct}
\end{equation}
\end{prop}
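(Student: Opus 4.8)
The plan is to recognise that the system \eqref{eq:Uhat-W-reconstruct} simply records that a point $\boldsymbol{W}(\boldsymbol{z})$ on the manifold $\mathcal{M}$ must have the right $\boldsymbol{U}$-coordinate (namely $\boldsymbol{z}$) and the right $\boldsymbol{U}^\perp$-coordinate (namely $\boldsymbol{W}_0(\boldsymbol{z})$, since lying on $\mathcal{M}=\{\hat{\boldsymbol{U}}=\boldsymbol{0}\}$ means $\boldsymbol{U}^\perp\boldsymbol{x}=\boldsymbol{W}_0(\boldsymbol{U}(\boldsymbol{x}))$ by \eqref{eq:Uhat-decoder}). Stack the two equations into a single equation for the map $\boldsymbol{G}:X\to Z\times\hat Z$ defined by $\boldsymbol{G}(\boldsymbol{x})=(\boldsymbol{U}(\boldsymbol{x}),\boldsymbol{U}^\perp\boldsymbol{x})$, so that \eqref{eq:Uhat-W-reconstruct} reads $\boldsymbol{G}(\boldsymbol{W}(\boldsymbol{z}))=(\boldsymbol{z},\boldsymbol{W}_0(\boldsymbol{z}))$, i.e. $\boldsymbol{W}=\boldsymbol{G}^{-1}\circ(\mathrm{id},\boldsymbol{W}_0)$. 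Existence and uniqueness of $\boldsymbol{W}$ then reduce to invertibility of $\boldsymbol{G}$ near the origin.

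First I would invoke the inverse function theorem: $D\boldsymbol{G}(\boldsymbol{0})=\bigl(\begin{smallmatrix}D\boldsymbol{U}(\boldsymbol{0})\\\boldsymbol{U}^\perp\end{smallmatrix}\bigr)$, and I would show this is exactly the matrix $\boldsymbol{Q}$ (with $D\hat{\boldsymbol{U}}(\boldsymbol{0})=\boldsymbol{U}^\perp$, which follows from \eqref{eq:Uhat-decoder} together with $D\boldsymbol{W}_0(\boldsymbol{0})=\boldsymbol{0}$), which is assumed invertible; hence $\boldsymbol{G}$ is a $C^r$ diffeomorphism from a neighbourhood of $\boldsymbol{0}$ in $X$ onto a neighbourhood of $\boldsymbol{0}$ in $Z\times\hat Z$. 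Since $(\mathrm{id},\boldsymbol{W}_0)$ is continuous (indeed $C^r$) and fixes the origin, the composition $\boldsymbol{W}:=\boldsymbol{G}^{-1}\circ(\mathrm{id},\boldsymbol{W}_0)$ is well-defined and $C^r$ on a neighbourhood of $\boldsymbol{0}$ in $Z$, and it is the unique solution of \eqref{eq:Uhat-W-reconstruct} because $\boldsymbol{G}$ is injective there. This already proves the proposition modulo checking that the thing constructed actually parametrises $\mathcal{M}$.

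Next I would verify that $\operatorname{im}\boldsymbol{W}=\mathcal{M}$ near the origin: for $\boldsymbol{x}=\boldsymbol{W}(\boldsymbol{z})$ we have $\boldsymbol{U}^\perp\boldsymbol{x}-\boldsymbol{W}_0(\boldsymbol{U}(\boldsymbol{x}))=\boldsymbol{W}_0(\boldsymbol{z})-\boldsymbol{W}_0(\boldsymbol{z})=\boldsymbol{0}$, i.e. $\hat{\boldsymbol{U}}(\boldsymbol{x})=\boldsymbol{0}$; conversely, if $\hat{\boldsymbol{U}}(\boldsymbol{x})=\boldsymbol{0}$ then $\boldsymbol{G}(\boldsymbol{x})=(\boldsymbol{U}(\boldsymbol{x}),\boldsymbol{W}_0(\boldsymbol{U}(\boldsymbol{x})))$, so $\boldsymbol{x}=\boldsymbol{W}(\boldsymbol{U}(\boldsymbol{x}))$ by the diffeomorphism property. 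It is also worth noting that $\boldsymbol{U}\circ\boldsymbol{W}=\mathrm{id}$ (the first equation of \eqref{eq:Uhat-W-reconstruct}) then automatically gives the intertwining $\boldsymbol{S}\circ(\boldsymbol{U}\circ\boldsymbol{W})=(\boldsymbol{U}\circ\boldsymbol{W})\circ\boldsymbol{S}$, and combined with \eqref{eq:MAP-U-invariance} one recovers $\boldsymbol{W}\circ\boldsymbol{S}=\boldsymbol{F}\circ\boldsymbol{W}$ on $\mathcal{M}=\mathcal{L}_{\boldsymbol{0}}$, confirming $\mathcal{M}$ is the invariant manifold with dynamics $\boldsymbol{S}$ — though strictly this last remark is a consistency check rather than part of what is asked.

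The only genuine subtlety — the "hard part" — is bookkeeping about domains: $\boldsymbol{G}^{-1}$ is defined only on a neighbourhood of $\boldsymbol{0}$, so \eqref{eq:Uhat-W-reconstruct} determines $\boldsymbol{W}$ uniquely only locally, which is consistent with Theorem~\ref{thm:MapManifold} being a local statement; I would simply state the result on a sufficiently small neighbourhood of the origin and not claim more. A secondary point is to confirm $D\hat{\boldsymbol{U}}(\boldsymbol{0})=\boldsymbol{U}^\perp$ so that $D\boldsymbol{G}(\boldsymbol{0})$ literally equals $\boldsymbol{Q}$; this is immediate from \eqref{eq:Uhat-decoder} and $D\boldsymbol{W}_0(\boldsymbol{0})=\boldsymbol{0}$, so no real difficulty arises. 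Everything else is a direct application of the inverse function theorem.
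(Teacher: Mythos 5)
Your argument is correct, but it takes a genuinely different route from the paper. You stack the two conditions into the single map $\boldsymbol{G}\left(\boldsymbol{x}\right)=\left(\boldsymbol{U}\left(\boldsymbol{x}\right),\boldsymbol{U}^{\perp}\boldsymbol{x}\right)$, observe via (\ref{eq:Uhat-decoder}) and $D\boldsymbol{W}_{0}\left(\boldsymbol{0}\right)=\boldsymbol{0}$ that $D\boldsymbol{G}\left(\boldsymbol{0}\right)=\boldsymbol{Q}$, and invoke the inverse function theorem to get $\boldsymbol{W}=\boldsymbol{G}^{-1}\circ\left(\mathrm{id},\boldsymbol{W}_{0}\right)$ as the unique local solution of (\ref{eq:Uhat-W-reconstruct}); you also check both inclusions showing $\operatorname{im}\boldsymbol{W}=\mathcal{M}$ near the origin. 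The paper instead splits $\boldsymbol{U}$ and $\boldsymbol{W}$ into linear and nonlinear parts, reads off $D\boldsymbol{W}\left(0\right)=\boldsymbol{Q}^{-1}\bigl(\begin{smallmatrix}\boldsymbol{I}\\ \boldsymbol{0}\end{smallmatrix}\bigr)$ from the linear part, and solves the nonlinear part by the fixed-point iteration (\ref{eq:Wtilde-iteration}), whose convergence for small $\left|\boldsymbol{z}\right|$ rests on $\widetilde{\boldsymbol{U}}$ and $\boldsymbol{W}_{0}$ being of quadratic order. The two arguments use the same essential ingredient (invertibility of $\boldsymbol{Q}$ plus the quadratic vanishing of the nonlinear terms, which is what makes the IFT hypotheses hold and the iteration contract), but yours is shorter and more conceptual, delegating the work to a standard theorem and delivering smoothness of $\boldsymbol{W}$ for free, while the paper's version is constructive: the iteration doubles as the numerical scheme actually used to reconstruct the decoder. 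Your caveat that uniqueness is only local (among solutions taking values in the neighbourhood where $\boldsymbol{G}$ is injective) is right and is implicitly the same restriction as the paper's "for $\left|\boldsymbol{z}\right|$ sufficiently small"; your closing verification that $\boldsymbol{W}$ inherits the dynamics $\boldsymbol{S}$ is indeed extra, but harmless.
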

\begin{proof}
First we show that conditions (\ref{eq:Uhat-W-reconstruct}) imply
$\hat{\boldsymbol{U}}\left(\boldsymbol{W}\left(\boldsymbol{x}\right)\right)=\boldsymbol{0}$.
We expand our expression using (\ref{eq:Uhat-decoder}) into $\hat{\boldsymbol{U}}\left(\boldsymbol{W}\left(\boldsymbol{x}\right)\right)=\boldsymbol{U}^{\perp}\boldsymbol{W}\left(\boldsymbol{z}\right)-\boldsymbol{W}_{0}\left(\boldsymbol{U}\left(\boldsymbol{W}\left(\boldsymbol{z}\right)\right)\right)$,
then use equations (\ref{eq:Uhat-W-reconstruct}), which yields $\hat{\boldsymbol{U}}\left(\boldsymbol{W}\left(\boldsymbol{x}\right)\right)=\boldsymbol{0}$.
To solve equations (\ref{eq:Uhat-W-reconstruct}) for $\boldsymbol{W}$,
we decompose $\boldsymbol{U}$, $\boldsymbol{W}$ into linear and
nonlinear components, such that $\boldsymbol{U}\left(\boldsymbol{x}\right)=D\boldsymbol{U}\left(0\right)\boldsymbol{x}+\widetilde{\boldsymbol{U}}\left(\boldsymbol{x}\right)$
and $\boldsymbol{W}\left(\boldsymbol{z}\right)=D\boldsymbol{W}\left(0\right)\boldsymbol{z}+\widetilde{\boldsymbol{W}}\left(\boldsymbol{z}\right)$.
Expanding equation (\ref{eq:Uhat-W-reconstruct}) with the decomposed
$\boldsymbol{U}$, $\boldsymbol{W}$ yields
\begin{equation}
\left.\begin{array}{rl}
D\boldsymbol{U}\left(0\right)\left(D\boldsymbol{W}\left(0\right)\boldsymbol{z}+\widetilde{\boldsymbol{W}}\left(\boldsymbol{z}\right)\right)+\widetilde{\boldsymbol{U}}\left(D\boldsymbol{W}\left(0\right)\boldsymbol{z}+\widetilde{\boldsymbol{W}}\left(\boldsymbol{z}\right)\right) & =\boldsymbol{z}\\
\boldsymbol{U}^{\perp}\left(D\boldsymbol{W}\left(0\right)\boldsymbol{z}+\widetilde{\boldsymbol{W}}\left(\boldsymbol{z}\right)\right) & =\boldsymbol{W}_{0}\left(\boldsymbol{z}\right)
\end{array}\right\} .\label{eq:Uhat-W-expand}
\end{equation}
The linear part of equation (\ref{eq:Uhat-W-expand}) is 
\[
\boldsymbol{Q}D\boldsymbol{W}\left(0\right)=\begin{pmatrix}\boldsymbol{I}\\
\boldsymbol{0}
\end{pmatrix},
\]
hence $D\boldsymbol{W}\left(0\right)=\boldsymbol{Q}^{-1}\begin{pmatrix}\boldsymbol{I}\\
\boldsymbol{0}
\end{pmatrix}$. The nonlinear part of (\ref{eq:Uhat-W-expand}) is 
\[
\boldsymbol{Q}\widetilde{\boldsymbol{W}}\left(\boldsymbol{z}\right)=\begin{pmatrix}-\widetilde{\boldsymbol{U}}\left(D\boldsymbol{W}\left(0\right)\boldsymbol{z}+\widetilde{\boldsymbol{W}}\left(\boldsymbol{z}\right)\right)\\
\boldsymbol{W}_{0}\left(\boldsymbol{z}\right)
\end{pmatrix},
\]
which can be solved by the iteration
\begin{equation}
\widetilde{\boldsymbol{W}}_{k+1}\left(\boldsymbol{z}\right)=\boldsymbol{Q}^{-1}\begin{pmatrix}-\widetilde{\boldsymbol{U}}\left(D\boldsymbol{W}\left(0\right)\boldsymbol{z}+\widetilde{\boldsymbol{W}}_{k}\left(\boldsymbol{z}\right)\right)\\
\boldsymbol{W}_{0}\left(\boldsymbol{z}\right)
\end{pmatrix},\;\widetilde{\boldsymbol{W}}_{1}\left(\boldsymbol{z}\right)=\boldsymbol{0},\;k=1,2,\ldots.\label{eq:Wtilde-iteration}
\end{equation}
Iteration (\ref{eq:Wtilde-iteration}) converges for $\left|\boldsymbol{z}\right|$
sufficiently small, due to $\widetilde{\boldsymbol{U}}$$\left(\boldsymbol{x}\right)=\mathcal{O}\left(\left|\boldsymbol{x}\right|^{2}\right)$
and $\boldsymbol{W}_{0}\left(\boldsymbol{z}\right)=\mathcal{O}\left(\left|\boldsymbol{z}\right|^{2}\right)$.
\end{proof}
As we will see in section \ref{sec:Examples}, this approach provides
better results than using an autoencoder. Here we have resolved the
dynamics transversal to the invariant manifold $\mathcal{M}$ up to
linear order. It is essential to resolve this dynamics to find invariance,
not just the location of data points. 
\begin{rem}
\label{rem:localiseROM}More consideration is needed in case $\hat{\boldsymbol{U}}\left(\boldsymbol{x}_{k}\right)$
assumes large values over some data points. This either requires to
replace $\boldsymbol{B}$ with a nonlinear map or we need to filter
out data points that are not in a small neighbourhood of the invariant
manifold $\mathcal{M}$. Due to $\boldsymbol{B}$ being high-dimensional,
replacing it with a nonlinear map leads to numerical difficulties.
Filtering data is easier. For example, we can assign weights to each
term in our optimisation problem (\ref{eq:MAP-U-optim}) depending
on how far a data point is from the predicted manifold, which is the
zero level surface of $\hat{\boldsymbol{U}}$. This can be done using
the optimisation problem
\begin{equation}
\arg\min_{\boldsymbol{B},\boldsymbol{U}^{\perp},\boldsymbol{W}_{0}}\sum_{k=1}^{N}\left\Vert \boldsymbol{x}_{k}\right\Vert ^{-2}\phi_{\kappa}\left(\left\Vert \hat{\boldsymbol{U}}\left(\boldsymbol{x}_{k}\right)\right\Vert ^{2}\right)\left\Vert \boldsymbol{B}\hat{\boldsymbol{U}}\left(\boldsymbol{x}_{k}\right)-\hat{\boldsymbol{U}}\left(\boldsymbol{y}_{k}\right)\right\Vert ^{2},\label{eq:MAP-Uhat-optim}
\end{equation}
where 
\[
\phi_{\kappa}\left(x\right)=\begin{cases}
\exp\frac{x}{x-\kappa^{2}} & 0\le x<\kappa^{2}\\
0 & x\ge\kappa^{2}
\end{cases}
\]
is the bump function and $\kappa>0$ determines the size of the neighbourhood
of the invariant manifold $\mathcal{M}$ that we take into account.
\end{rem}
\begin{rem}
The approximation (\ref{eq:Uhat-invariance}) can be made more accurate
if we allow matrix $\boldsymbol{U}^{\perp}$ to vary with the parameter
of the manifold $\boldsymbol{z}=\boldsymbol{U}\left(\boldsymbol{x}\right)$.
In this case the encoder becomes 
\[
\breve{\boldsymbol{U}}\left(\boldsymbol{x}\right)=\boldsymbol{U}^{\perp}\left(\boldsymbol{U}\left(\boldsymbol{x}\right)\right)\boldsymbol{x}-\boldsymbol{W}_{0}\left(\boldsymbol{U}\left(\boldsymbol{x}\right)\right).
\]
This does increase computational costs, but not nearly as much as
if we were calculating a globally accurate invariant foliation. For
our example problems, we find that this extension is not necessary.
\end{rem}
\begin{rem}
\label{rem:extrasimpleROM}It is also possible to eliminate a-priori
calculation of $\boldsymbol{U}$. We can assume that $\boldsymbol{U}$
is a linear map, such that $\boldsymbol{U}\boldsymbol{U}^{\star}=\boldsymbol{I}$
and treat it as an unknown in representation (\ref{eq:Uhat-decoder}).
The assumption that $\boldsymbol{U}$ is linear makes sense if we
limit ourselves to a small neighbourhood of the invariant manifold
$\mathcal{M}$ by setting $\kappa<\infty$ in (\ref{eq:MAP-Uhat-optim}),
as we have already assumed a linear dynamics among the leaves of the
associated foliation $\hat{\mathcal{F}}$ given by linear map $\boldsymbol{B}$.
Once $\boldsymbol{B}$, $\boldsymbol{U},$ $\boldsymbol{U}^{\perp}$
and $\boldsymbol{W}_{0}$ are found, map $\boldsymbol{S}$ can also
be fitted to the invariance equation (\ref{eq:MAP-U-invariance}).
The equation to fit $\boldsymbol{S}$ to data is
\[
\arg\min_{\boldsymbol{S}}\sum_{k=1}^{N}\left\Vert \boldsymbol{x}_{k}\right\Vert ^{-2}\phi_{\kappa}\left(\left\Vert \hat{\boldsymbol{U}}\left(\boldsymbol{x}_{k}\right)\right\Vert ^{2}\right)\left\Vert \boldsymbol{U}\boldsymbol{y}_{k}-\boldsymbol{S}\left(\boldsymbol{U}\boldsymbol{x}_{k}\right)\right\Vert ^{2},
\]
which is a straightforward linear least squares problem, if $\boldsymbol{S}$
is linear in its parameters. This approach will be further explored
elsewhere.
\end{rem}

\section{\label{sec:freq-damp}Instantaneous frequencies and damping ratios}

Instantaneous damping ratios and frequencies are usually defined with
respect to a model that is fitted to data \cite{JinBrake2020FreqDamp}.
Here we take a similar approach and stress that these quantities only
make sense in an Euclidean frame and not in the nonlinear frame of
an invariant manifold or foliation. The geometry of a manifold or
foliation depends on an arbitrary parametrisation, hence uncorrected
results are not unique. Many studies mistakenly use nonlinear coordinate
systems, for example one by the present author \cite{Szalai20160759}
and colleagues \cite{Breunung2017,PONSIOEN2018269}. Such calculations
are only asymptotically accurate near the equilibrium. Here we describe
how to correct this error.

We assume a two-dimensional invariant manifold $\mathcal{M}$, parametrised
by a decoder $\boldsymbol{W}$ in polar coordinates $r,\theta$. The
invariance equation (\ref{eq:MAP-W-invariance}) for the decoder $\boldsymbol{W}$
can be written as
\begin{equation}
\boldsymbol{W}\left(R\left(r\right),\theta+T\left(r\right)\right)=\boldsymbol{F}\left(\boldsymbol{W}\left(r,\theta\right)\right).\label{eq:Polar-Invariance}
\end{equation}
Without much thinking, (as described in \cite{Szalai20160759,Szalai2020ISF}),
the instantaneous frequency and damping could be calculated as
\begin{align}
\omega\left(r\right) & =T\left(r\right) & \left[\text{rad}/\text{step}\right],\label{eq:Naive-natFreq}\\
\zeta\left(r\right) & =-\frac{\log\frac{R\left(r\right)}{r}}{\omega\left(r\right)} & \left[-\right],\label{eq:Naive-dampRatio}
\end{align}
respectively. The instantaneous amplitude is a norm $A\left(r\right)=\left\Vert \boldsymbol{W}\left(r,\cdot\right)\right\Vert _{A}$,
for example 
\begin{equation}
\left\Vert \boldsymbol{f}\right\Vert _{A}=\sqrt{\left\langle \boldsymbol{f},\boldsymbol{f}\right\rangle _{A}},\;\left\langle \boldsymbol{f},\boldsymbol{f}\right\rangle _{A}=\frac{1}{2\pi}\int_{0}^{2\pi}\left\langle \boldsymbol{f}\left(\theta\right),\boldsymbol{f}\left(\theta\right)\right\rangle _{X}\mathrm{d}\theta,\label{eq:amplitude}
\end{equation}
where $\left\langle \cdot,\cdot\right\rangle _{X}$ is the inner product
on vector space $X$. 

The frequency and damping ratio values are only accurate if there
is a linear relation between $\left\Vert \boldsymbol{W}\left(r,\cdot\right)\right\Vert _{A}$
and $r$, for example
\begin{equation}
\left\Vert \boldsymbol{W}\left(r,\cdot\right)\right\Vert _{A}=r\label{eq:Polar-amplitude}
\end{equation}
and the relative phase between two closed curves satisfies
\begin{equation}
\arg\min_{\gamma}\left\Vert \boldsymbol{W}\left(r_{1},\cdot\right)-\boldsymbol{W}\left(r_{2},\cdot+\gamma\right)\right\Vert _{A}=0.\label{eq:Polar-phase}
\end{equation}
Equation (\ref{eq:Polar-amplitude}) means that the instantaneous
amplitude of the trajectories on manifold $\mathcal{M}$ is the same
as parameter $r$, hence the map $r\mapsto R\left(r\right)$ determines
the change in amplitude. Equation (\ref{eq:Polar-phase}) stipulates
that the parametrisation in the angular variable is such that there
is no phase shift between the closed curves $\boldsymbol{W}\left(r_{1},\cdot\right)$
and $\boldsymbol{W}\left(r_{2},\cdot\right)$ for $r_{1}\neq r_{2}$.
If there would be a phase shift $\gamma$, a trajectory that within
a period moves from amplitude $r_{1}$ to $r_{2}$, would misrepresent
its instantaneous period of vibration by phase $\gamma$, hence the
frequency given by $T\left(r\right)$ would be inaccurate. In fact,
one can set a continuous phase shift $\gamma\left(r\right)$ among
the closed curves $\boldsymbol{W}\left(r,\cdot\right)$, such that
the frequency given by $T\left(r\right)$ has a prescribed value.
The following result provides accurate values for instantaneous frequencies
and damping ratios.
\begin{prop}
\label{prop:Polar-fr-dm}Assume a decoder $\boldsymbol{W}:\left[0,r_{1}\right]\times\left[0,2\pi\right]\to X$
and functions $R,T:\left[0,r_{1}\right]\to\mathbb{R}$ such that they
satisfy invariance equation (\ref{eq:Polar-Invariance}). 
\begin{enumerate}
\item A new parametrisation $\tilde{\boldsymbol{W}}$ of the manifold generated
by $\boldsymbol{W}$ that satisfies the constraints (\ref{eq:Polar-amplitude})
and (\ref{eq:Polar-phase}) is given by
\[
\tilde{\boldsymbol{W}}\left(r,\theta\right)=\boldsymbol{W}\left(t,\theta+\gamma\left(t\right)\right),\;t=\kappa^{-1}\left(r\right),
\]
where 
\begin{align}
\gamma\left(r\right) & =-\int_{0}^{r}\frac{\int_{0}^{2\pi}\left\langle D_{1}\boldsymbol{W}\left(\rho,\theta\right),D_{2}\boldsymbol{W}\left(\rho,\theta\right)\right\rangle _{X}\mathrm{d}\theta}{\int_{0}^{2\pi}\left\langle D_{2}\boldsymbol{W}\left(\rho,\theta\right),D_{2}\boldsymbol{W}\left(\rho,\theta\right)\right\rangle _{X}\mathrm{d}\theta}\mathrm{d}\rho,\label{eq:Polar-delta-prime-1}\\
\kappa\left(r\right) & =\sqrt{\frac{1}{2\pi}\int_{0}^{2\pi}\left\langle \boldsymbol{W}\left(r,\theta\right),\boldsymbol{W}\left(r,\theta\right)\right\rangle _{X}\mathrm{d}\theta}\label{eq:Polar-kappa-prime-1}
\end{align}
and $\left\langle \cdot,\cdot\right\rangle _{X}$ is the inner product
on vector space $X$.
\item The instantaneous natural frequency and damping ratio are calculated
as 
\begin{align}
\omega\left(r\right) & =T\left(t\right)+\gamma\left(t\right)-\gamma\left(R\left(t\right)\right) & \left[\text{rad}/\text{step}\right],\label{eq:Polar-omega-correct}\\
\zeta\left(r\right) & =-\frac{\log\left[r^{-1}\kappa\left(R\left(t\right)\right)\right]}{\omega\left(r\right)} & \left[-\right].\label{eq:Polar-zeta-correct}
\end{align}
where $t=\kappa^{-1}\left(r\right)$.
\end{enumerate}
\end{prop}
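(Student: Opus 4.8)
The plan is to verify that the proposed reparametrisation $\tilde{\boldsymbol{W}}$ satisfies the two normalising constraints, and then to propagate these constraints through the invariance equation to recover the corrected frequency and damping formulas. I would organise the argument around the two free gauge choices available when reparametrising a two-dimensional invariant manifold: a radial reparametrisation $r\mapsto\kappa^{-1}(r)$ and an amplitude-dependent angular shift $\theta\mapsto\theta+\gamma(t)$. These are precisely the transformations that preserve the manifold $\mathcal{M}$ as a set while changing its parametrisation, so $\tilde{\boldsymbol{W}}$ generates the same manifold as $\boldsymbol{W}$; this is the ``new parametrisation of the manifold generated by $\boldsymbol{W}$'' claim and needs only the observation that $r\mapsto\kappa(r)$ is monotone increasing (hence invertible) near the origin, which follows from $\kappa(0)=0$ and $\kappa'(0)>0$ since $D\boldsymbol{W}(\boldsymbol{0})$ has full rank.

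For part 1, I would first check constraint (\ref{eq:Polar-amplitude}). By definition $\left\Vert\tilde{\boldsymbol{W}}(r,\cdot)\right\Vert_A^2 = \frac{1}{2\pi}\int_0^{2\pi}\left\langle\boldsymbol{W}(t,\theta+\gamma(t)),\boldsymbol{W}(t,\theta+\gamma(t))\right\rangle_X\,\mathrm{d}\theta$; since the inner integral is over a full period, the angular shift $\gamma(t)$ drops out, leaving $\kappa(t)^2 = \kappa(\kappa^{-1}(r))^2 = r^2$, which is exactly (\ref{eq:Polar-amplitude}). Next, for the phase constraint (\ref{eq:Polar-phase}), I would differentiate the objective $\left\Vert\tilde{\boldsymbol{W}}(r_1,\cdot)-\tilde{\boldsymbol{W}}(r_2,\cdot+\gamma)\right\Vert_A^2$ with respect to $\gamma$ at $\gamma=0$ and with respect to $r_1$ (equivalently, show the first-order condition holds for all nearby $r_1,r_2$ simultaneously). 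Setting $r_1=r_2$ and taking the $r$-derivative, the condition reduces to requiring that $\frac{\mathrm{d}}{\mathrm{d}r}\left\langle\tilde{\boldsymbol{W}}(r,\cdot),D_2\tilde{\boldsymbol{W}}(r,\cdot)\right\rangle_A=0$ pointwise, i.e. $\int_0^{2\pi}\left\langle D_1\tilde{\boldsymbol{W}},D_2\tilde{\boldsymbol{W}}\right\rangle_X\,\mathrm{d}\theta=0$. Using the chain rule $D_1\tilde{\boldsymbol{W}}(r,\theta) = (\kappa^{-1})'(r)\left[D_1\boldsymbol{W}(t,\theta+\gamma(t)) + \gamma'(t)D_2\boldsymbol{W}(t,\theta+\gamma(t))\right]$ and $D_2\tilde{\boldsymbol{W}}(r,\theta)=D_2\boldsymbol{W}(t,\theta+\gamma(t))$, this integral vanishes precisely when $\gamma'(t)$ equals the ratio appearing in (\ref{eq:Polar-delta-prime-1}); integrating from $0$ gives the stated formula for $\gamma$, with the initial condition $\gamma(0)=0$ ensuring no spurious constant shift at the origin.

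For part 2, I would substitute $\tilde{\boldsymbol{W}}$ into the polar invariance equation and read off the new radial and angular maps $\tilde{R},\tilde{T}$. Writing $\boldsymbol{F}(\tilde{\boldsymbol{W}}(r,\theta)) = \boldsymbol{F}(\boldsymbol{W}(t,\theta+\gamma(t)))$ and applying (\ref{eq:Polar-Invariance}) for $\boldsymbol{W}$ gives $\boldsymbol{W}(R(t),\theta+\gamma(t)+T(t))$. On the other hand we want this to equal $\tilde{\boldsymbol{W}}(\tilde{R}(r),\theta+\tilde{T}(r)) = \boldsymbol{W}(\kappa^{-1}(\tilde{R}(r)),\theta+\tilde{T}(r)+\gamma(\kappa^{-1}(\tilde{R}(r))))$. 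Matching the radial argument forces $\kappa^{-1}(\tilde{R}(r)) = R(t)$, i.e. $\tilde{R}(r) = \kappa(R(t))$, and matching the angular argument forces $\tilde{T}(r) + \gamma(R(t)) = \gamma(t) + T(t)$, i.e. $\tilde{T}(r) = T(t) + \gamma(t) - \gamma(R(t))$. Then $\omega(r) = \tilde{T}(r)$ gives (\ref{eq:Polar-omega-correct}), and $\zeta(r) = -\log(\tilde{R}(r)/r)/\omega(r) = -\log(\kappa(R(t))/r)/\omega(r)$ gives (\ref{eq:Polar-zeta-correct}) after recalling $r = \kappa(t)$ so that the denominator $r$ is already in amplitude units. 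I expect the main obstacle to be the phase-constraint step: one must argue carefully that the pointwise ODE condition on $\gamma'$ is both necessary and sufficient for (\ref{eq:Polar-phase}) to hold for \emph{all} pairs $r_1,r_2$ in the range, not just infinitesimally close ones — this requires showing that the relative-phase functional $\gamma\mapsto\left\Vert\tilde{\boldsymbol{W}}(r_1,\cdot)-\tilde{\boldsymbol{W}}(r_2,\cdot+\gamma)\right\Vert_A^2$ is minimised at $\gamma=0$, which follows by integrating the first-order condition along the radial direction and invoking convexity of the quadratic in $\gamma$ near its minimum (or, more robustly, by noting that $\frac{\partial}{\partial\gamma}$ of the functional at $\gamma=0$ equals $-2\left\langle D_2\tilde{\boldsymbol{W}}(r_2,\cdot),\tilde{\boldsymbol{W}}(r_1,\cdot)-\tilde{\boldsymbol{W}}(r_2,\cdot)\right\rangle_A$ and showing this telescopes to zero using the constructed $\gamma$).
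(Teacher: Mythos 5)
Your proposal is correct and follows essentially the same route as the paper: the amplitude constraint yields the radial change of variable $t=\kappa^{-1}(r)$, the angular shift $\gamma$ comes from the orthogonality (stationarity) condition $\left\langle D_{1}\boldsymbol{W}+\dot{\gamma}D_{2}\boldsymbol{W},D_{2}\boldsymbol{W}\right\rangle=0$, and $\tilde{R}(r)=\kappa\left(R\left(t\right)\right)$, $\tilde{T}(r)=T\left(t\right)+\gamma\left(t\right)-\gamma\left(R\left(t\right)\right)$ are read off the transformed invariance equation exactly as in the appendix. The only difference is presentational: the paper obtains the phase condition as the $\epsilon\to0$ limit of the first-order condition for nearby leaves, i.e.\ it imposes constraint (\ref{eq:Polar-phase}) only infinitesimally (``on average''), which is also how the global-sufficiency concern you raise at the end is implicitly resolved.
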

\begin{proof}
A proof is given in appendix \ref{sec:proof-FreqDamp}.
\end{proof}
\begin{rem}
The transformed expressions (\ref{eq:Polar-omega-correct}), (\ref{eq:Polar-zeta-correct})
for the instantaneous frequency and damping ratio show that any instantaneous
frequency can be achieved for all $r>0$ if $R\left(r\right)\neq r$
by choosing appropriate functions $\rho,\gamma$. For example, zero
frequency is achieved by solving 
\begin{align}
\gamma\left(r\right) & =\gamma\left(R\left(r\right)\right)-T\left(r\right),\label{eq:Polar-zero-freq}
\end{align}
which is a functional equation. For an $\epsilon>0$, fix $\gamma\left(R\left(\epsilon\right)\right)=0$,
$\gamma\left(\epsilon\right)=T\left(\epsilon\right)$ and some interpolating
values in the interior of the interval $\left[R\left(\epsilon\right),\epsilon\right]$,
then use contraction mapping to arrive at a unique solution for function
$\gamma$.
\end{rem}
\begin{rem}
\label{rem:VF-freq-dm}The same calculation applies to vector fields,
$\dot{\boldsymbol{x}}=\boldsymbol{f}\left(\boldsymbol{x}\right)$,
but the final result is somewhat different. Assume a decoder $\boldsymbol{W}:\left[0,r_{1}\right]\times\left[0,2\pi\right]\to X$
and functions $R,T:\left[0,r_{1}\right]\to\mathbb{R}$ such that they
satisfy the invariance equation 
\begin{equation}
D_{1}\boldsymbol{W}\left(r,\theta\right)R\left(r\right)+D_{2}\boldsymbol{W}\left(r,\theta\right)T\left(r\right)=\boldsymbol{f}\left(\boldsymbol{W}\left(r,\theta\right)\right).\label{eq:Polar-VF-invariance}
\end{equation}
The instantaneous natural frequency and damping ratio is calculated
by 
\begin{align*}
\omega\left(r\right) & =T\left(t\right)-\dot{\gamma}\left(t\right)R\left(t\right) & \left[\text{rad}/\text{unit time}\right],\\
\zeta\left(r\right) & =-\frac{\dot{\kappa}\left(t\right)R\left(t\right)}{r\omega\left(r\right)} & \left[-\right],
\end{align*}
where $t=\kappa^{-1}\left(r\right)$. All other quantities are as
in proposition \ref{prop:Polar-fr-dm}. A proof is given in appendix
\ref{sec:proof-FreqDamp}.
\end{rem}
\begin{rem}
\label{rem:FreqDamp-Scaling}Note that proposition \ref{prop:Polar-fr-dm}
also applies if we create a different measure of amplitude, for example
$\hat{\boldsymbol{W}}\left(r,\theta\right)=\boldsymbol{w}^{\star}\cdot\boldsymbol{W}\left(r,\theta\right)$,
where $\boldsymbol{w}^{\star}$ is a linear map. Indeed, in the proof
we did not use that $\boldsymbol{W}$ is a manifold immersion, it
only served as Euclidean coordinates of points on the invariant manifold.
Hence, the transformed function $\hat{\boldsymbol{W}}$ gives us the
same frequencies and damping ratios as $\boldsymbol{W}$, but at linearly
scaled amplitudes.
\end{rem}
The following example illustrates that if a system is linear in a
nonlinear coordinate system, we can recover the actual instantaneous
damping ratios and frequencies using proposition \ref{prop:Polar-fr-dm}.
This is the case of Koopman eigenfunctions (as in remark \ref{rem:Koopman}),
or normal form transformations where all the nonlinear terms are eliminated.
\begin{example}
Let us consider the linear map
\begin{equation}
\begin{array}{rl}
r_{k+1} & =\mathrm{e}^{\zeta_{0}\omega_{0}}r_{k}\\
\theta_{k+1} & =\theta_{k}+\omega_{0}
\end{array}\label{eq:Polar-Linear-Model}
\end{equation}
and the corresponding nonlinear decoder
\begin{equation}
\boldsymbol{W}\left(r,\theta\right)=\left(\begin{array}{c}
r\cos\theta-\frac{1}{4}r^{3}\cos^{3}\theta\\
r\sin\theta+\frac{1}{2}r^{3}\cos^{3}\theta
\end{array}\right).\label{eq:Polar-Linear-Decoder}
\end{equation}
In terms of the polar invariance equation (\ref{eq:Polar-Invariance})
the linear map (\ref{eq:Polar-Linear-Model}) translates to $T\left(r\right)=\omega_{0}$
and $R\left(r\right)=\mathrm{e}^{\zeta_{0}\omega_{0}}r$. If we disregard
the nonlinearity of $\boldsymbol{W}$, the instantaneous frequency
and the instantaneous damping ratio of our hypothetical system would
be constant, that is $\omega\left(r\right)=\omega_{0}$ and $\zeta\left(r\right)=\zeta_{0}$.
Using proposition \ref{prop:Polar-fr-dm}, we calculate the effect
of $\boldsymbol{W}$, and find that
\begin{align*}
\gamma\left(r\right) & =-\frac{2}{\sqrt{19}}\left(\tan^{-1}\left(\frac{15r^{2}-8}{8\sqrt{19}}\right)+\cot^{-1}\left(\sqrt{19}\right)\right),\\
\kappa\left(r\right) & =r^{2}-\frac{3r^{4}}{16}+\frac{25r^{6}}{256}.
\end{align*}
Finally, we plot expressions (\ref{eq:Polar-omega-correct}) and (\ref{eq:Polar-zeta-correct})
in figure \ref{fig:Freq-Damp-graph}(a) and \ref{fig:Freq-Damp-graph}(b),
respectively. It can be seen that frequencies and damping ratios change
with the vibration amplitude (red lines), but they are constant without
taking the decoder (\ref{eq:Polar-Linear-Decoder}) into account (blue
lines).

\begin{figure}
\begin{centering}
\includegraphics[height=7cm]{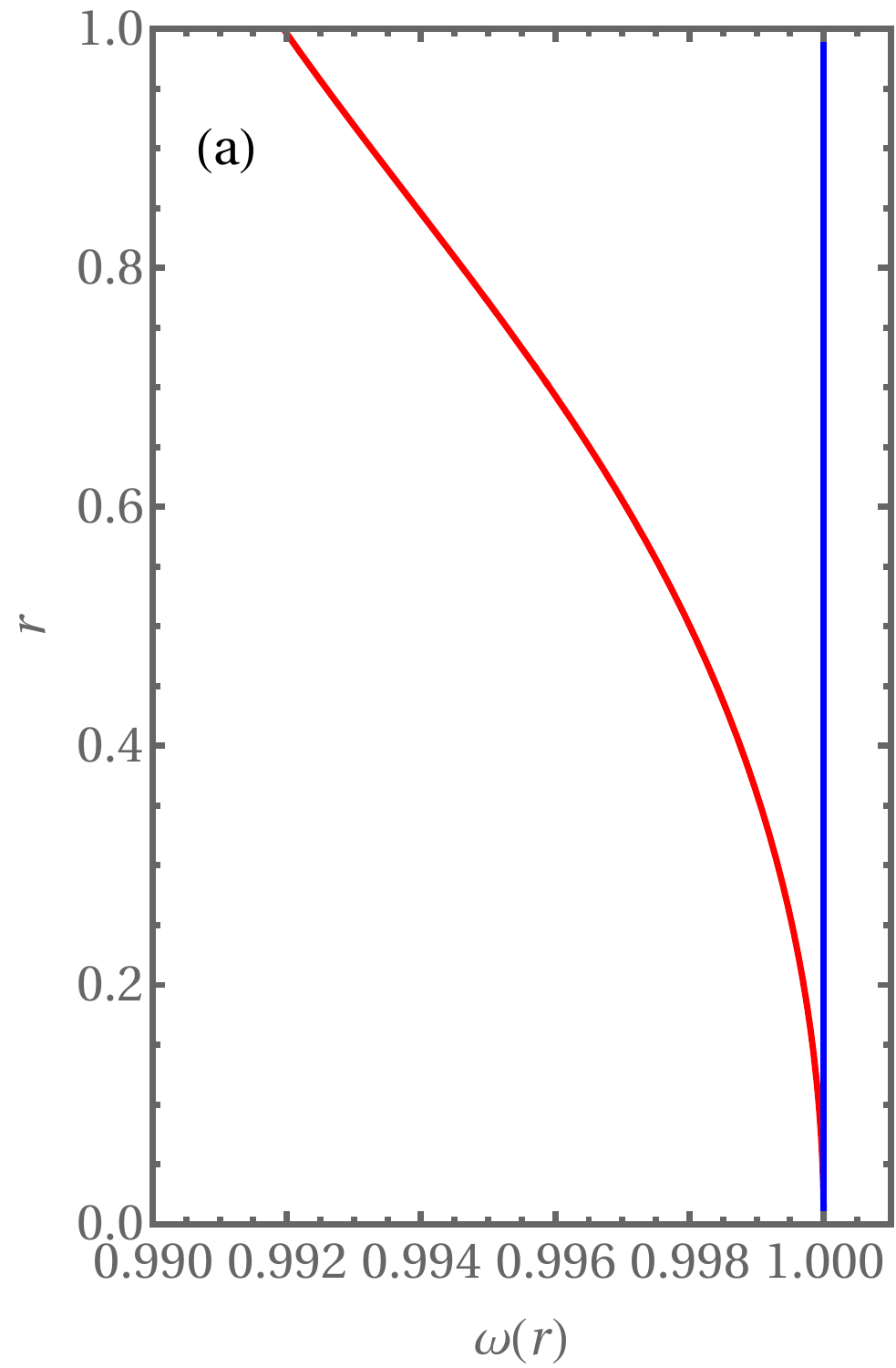}\includegraphics[height=7cm]{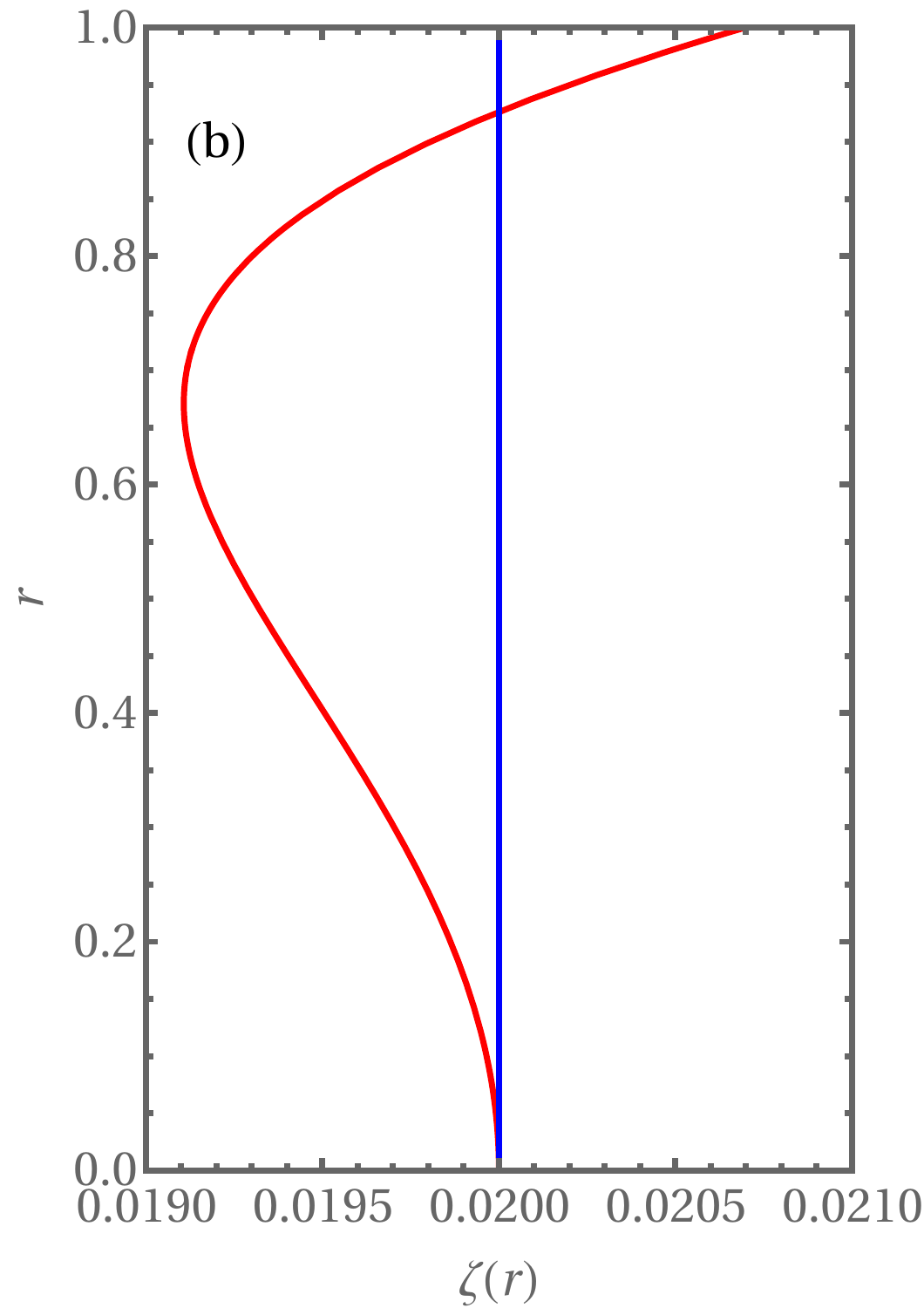}
\par\end{centering}
\caption{\label{fig:Freq-Damp-graph}The instantaneous frequency (a) and instantaneous
damping of system (\ref{eq:Polar-Linear-Model}) together with the
decoder (\ref{eq:Polar-Linear-Decoder}). The blue line represents
the naive calculation just from system (\ref{eq:Polar-Linear-Model})
and the red lines represent the corrected values (\ref{eq:Polar-omega-correct})
and (\ref{eq:Polar-zeta-correct}).}
\end{figure}
The geometry of the re-parametrisation is illustrated in figure \ref{fig:Polar-repair}.
\begin{figure}
\begin{centering}
\includegraphics[height=5cm]{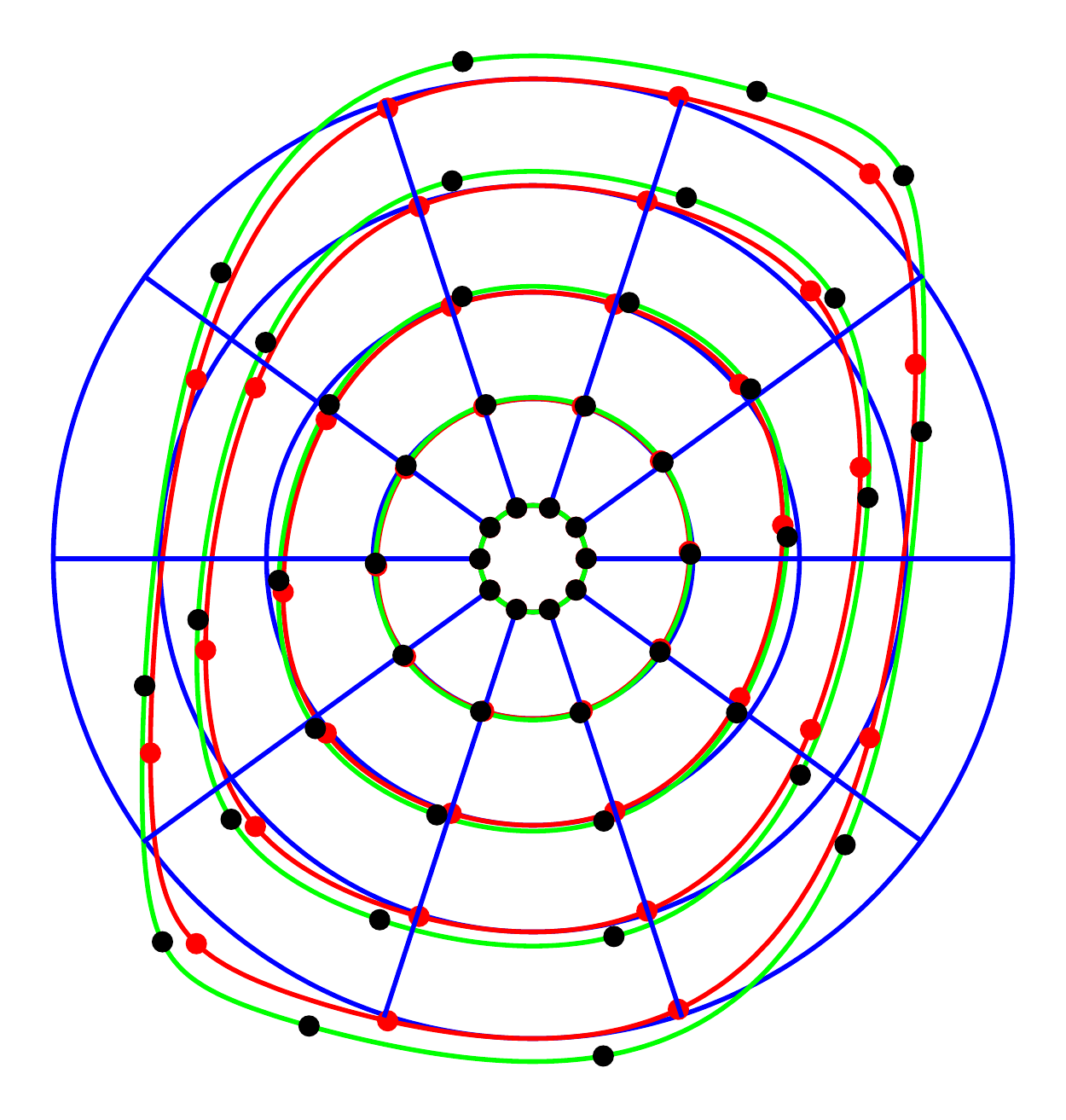}
\par\end{centering}
\caption{\label{fig:Polar-repair}Geometry of the decoder (\ref{eq:Polar-Linear-Decoder}).
The blue grid represents the polar coordinate system with maximum
radius $r=1$. The red curves are the images of the blue concentric
circles under the decoder $\boldsymbol{W}$. The red dots are images
of the intersection points of the blue circles with the blue radial
lines under $\boldsymbol{W}$. The red dots do not fall onto the radial
blue lines, which indicates a phase shift. The green curves correspond
to the images of the blue concentric circles under the re-parametrised
decoder $\tilde{\boldsymbol{W}}$ for the same parameters as the red
curves. The amplitudes of the green curves are now the same as the
amplitude of the corresponding blue curves. Due to the re-parametrisation,
there is no phase shift between the black dots and the blue radial
lines, on average.}
\end{figure}
\end{example}

\section{\label{sec:ROM_id_processes}ROM identification procedures}

Here we describe our methodology of finding invariant foliations,
manifolds and autoencoders. These steps involve methods described
so far and further methods from the appendices. First we start with
finding an invariant foliation together with the invariant manifold
that has the same dynamics as the foliation.
\begin{lyxlist}{00.00.0000}
\item [{F1.}] If the data is not from the full state space, especially
when it is a scalar signal, we use a state-space reconstruction technique
as described in appendix \ref{sec:DelayEmbedding}. At this point
we have data points $\boldsymbol{x}_{k},\boldsymbol{y}_{k}\in X$,
$k=1,\ldots,N$.
\item [{F2.\label{F2}}] To ensure that the solution of (\ref{eq:MAP-U-optim})
converges to the desired foliation, we calculate a linear approximation
of the foliation. We only consider a small neighbourhood of the fixed
point, where the dynamics is nearly linear. Hence, we define the index
set $\mathcal{I}_{\rho}=\left\{ k\in\left\{ 1,\ldots,N\right\} :\left|\boldsymbol{x}_{k}\right|<\rho\right\} $
with $\rho$ sufficiently small, but large enough that it encompasses
enough data for linear parameter estimation. 
\begin{enumerate}
\item First we fit a linear model to the data using a create a least-squares
method \cite{boyd_vandenberghe_2018}. The linear map is assumed to
be $\boldsymbol{y}_{k}=\boldsymbol{A}\boldsymbol{x}_{k}$, where the
coefficient matrix is calculated as 
\begin{align*}
\boldsymbol{K} & =\sum_{k\in\mathcal{I}_{\rho}}\left|\boldsymbol{x}_{k}\right|^{-2}\boldsymbol{x}_{k}\otimes\boldsymbol{x}_{k}^{T},\\
\boldsymbol{L} & =\sum_{k\in\mathcal{I}_{\rho}}\left|\boldsymbol{x}_{k}\right|^{-2}\boldsymbol{y}_{k}\otimes\boldsymbol{x}_{k}^{T},\\
\boldsymbol{A} & =\boldsymbol{L}\boldsymbol{K}^{-1}.
\end{align*}
Matrix $\boldsymbol{A}$ approximates the Jacobian $D\boldsymbol{F}\left(\boldsymbol{0}\right)$,
which then can be used to identify the invariant subspaces $E^{\star}$
and $E$. 
\item The linearised version of foliation invariance (\ref{eq:MAP-U-invariance})
and manifold invariance (\ref{eq:MAP-W-invariance}) are 
\begin{align*}
\boldsymbol{S}_{1}\boldsymbol{U}_{1} & =\boldsymbol{U}_{1}\boldsymbol{A}\;\text{and}\\
\boldsymbol{W}_{1}\boldsymbol{S}_{1} & =\boldsymbol{A}\boldsymbol{W}_{1},
\end{align*}
respectively. We also need to calculate the linearised version of
our locally defined foliation (\ref{eq:Uhat-invariance}), that is
\[
\boldsymbol{B}\boldsymbol{U}_{1}^{\perp}=\boldsymbol{U}_{1}^{\perp}\boldsymbol{A}.
\]
To find the unknown linear maps $\boldsymbol{U}_{1}$, $\boldsymbol{W}_{1}$,
$\boldsymbol{U}_{1}^{\perp}$, $\boldsymbol{S}_{1}$ and\textbf{ $\boldsymbol{B}$}
from\textbf{ $\boldsymbol{A}$}, let us calculate the using real Schur
decomposition of $\boldsymbol{A}$, that is $\boldsymbol{A}\boldsymbol{Q}=\boldsymbol{Q}\boldsymbol{H}$
or $\boldsymbol{Q}^{T}\boldsymbol{A}=\boldsymbol{H}\boldsymbol{Q}^{T}$,
where $\boldsymbol{Q}$ is unitary and $\boldsymbol{H}$ is in an
upper Hessenberg matrix, which is zero below the first subdiagonal.
The Schur decomposition is calculated (or rearranged) such that the
first $\nu$ column vectors, $\boldsymbol{q}_{1},\ldots,\boldsymbol{q}_{\nu}$
of $\boldsymbol{Q}$ span the required right invariant subspace $E$
and correspond to eigenvalues $\mu_{1},\ldots,\mu_{\nu}$. Therefore
we find that $\boldsymbol{W}_{1}=\left[\boldsymbol{q}_{1},\ldots,\boldsymbol{q}_{\nu}\right]$.
In addition, the last $n-\nu$ column vectors of $\boldsymbol{Q}$,
when transposed, define a left-invariant subspace of $\boldsymbol{A}$.
Therefore we define $\boldsymbol{U}_{1}^{\perp}=\left[\boldsymbol{q}_{\nu+1},\ldots,\boldsymbol{q}_{n}\right]^{T}$
and $\boldsymbol{B}_{1}=\boldsymbol{U}_{1}^{\perp}\boldsymbol{A}\boldsymbol{U}_{1}^{\perp T}$,
which provides the initial guesses $\boldsymbol{U}^{\perp}\approx\boldsymbol{U}_{1}^{\perp}$
and $\boldsymbol{B}\approx\boldsymbol{B}_{1}$ in optimisation (\ref{eq:MAP-Uhat-optim}).
In order to find the left-invariant subspace of $\boldsymbol{A}$
corresponding to the selected eigenvalues, we rearrange the Schur
decomposition into $\hat{\boldsymbol{Q}}^{T}\boldsymbol{A}=\hat{\boldsymbol{H}}\hat{\boldsymbol{Q}}^{T}$,
where $\hat{\boldsymbol{Q}}=\left[\hat{\boldsymbol{q}}_{1},\ldots,\hat{\boldsymbol{q}}_{n}\right]$
(using \texttt{ordschur} in Julia or Matlab) such that $\mu_{1},\ldots,\mu_{\nu}$
now appear last in the diagonal of $\hat{\boldsymbol{H}}$ at indices
$\nu+1,\ldots,n$. This allows us to define $\boldsymbol{U}_{1}=\left[\hat{\boldsymbol{q}}_{n-\nu},\ldots,\hat{\boldsymbol{q}}_{n}\right]^{T}$
and $\boldsymbol{S}_{1}=\boldsymbol{U}_{1}\boldsymbol{A}\boldsymbol{U}_{1}^{T}$
which provides the initial guess for $D\boldsymbol{U}\left(\boldsymbol{0}\right)=\boldsymbol{U}_{1}$
and $D\boldsymbol{S}\left(\boldsymbol{0}\right)=\boldsymbol{S}_{1}$. 
\end{enumerate}
\item [{F3.}] We solve the optimisation problem (\ref{eq:MAP-U-optim})
with the initial guess $D\boldsymbol{U}\left(\boldsymbol{0}\right)=\boldsymbol{U}_{1}$
and $D\boldsymbol{S}\left(\boldsymbol{0}\right)=\boldsymbol{S}_{1}$
as calculated in the previous step. We also prescribe the constraints
that $D\boldsymbol{U}\left(\boldsymbol{0}\right)\left(D\boldsymbol{U}\left(\boldsymbol{0}\right)\right)^{T}=\boldsymbol{I}$
and that \textbf{$\boldsymbol{U}\left(\boldsymbol{W}_{1}\boldsymbol{z}\right)$}
is linear in $\boldsymbol{z}$. The numerical representation of the
encoder $\boldsymbol{U}$ is described in appendix \ref{subsec:sparse-poly}.
\item [{F4.\label{F4}}] We perform a normal form transformation on map
$\boldsymbol{S}$ and transform $\boldsymbol{U}$ into the new coordinates.
The normal form $\boldsymbol{S}_{n}$ satisfies the invariance equation
$\boldsymbol{U}_{n}\circ\boldsymbol{S}=\boldsymbol{S}_{n}\circ\boldsymbol{U}_{n}$
(c.f. equation (\ref{eq:MAP-U-invariance})), where $\boldsymbol{U}_{n}:Z\to Z$
is a nonlinear map. We then replace $\boldsymbol{U}$ with $\boldsymbol{U}_{n}\circ\boldsymbol{U}$
and \textbf{$\boldsymbol{S}$} with $\boldsymbol{S}_{n}$ as our ROM.
This step is optional and only required if we want to calculate instantaneous
frequencies and damping ratios. In a two-dimensional coordinate system,
where we have a complex conjugate pair of eigenvalues $\mu_{1}=\overline{\mu}_{2}$,
the real valued normal form is
\begin{equation}
\begin{pmatrix}z_{1}\\
z_{2}
\end{pmatrix}_{k+1}=\begin{pmatrix}z_{1}f_{r}\left(z_{1}^{2}+z_{2}^{2}\right)-z_{2}f_{r}\left(z_{1}^{2}+z_{2}^{2}\right)\\
z_{1}f_{i}\left(z_{1}^{2}+z_{2}^{2}\right)+z_{2}f_{r}\left(z_{1}^{2}+z_{2}^{2}\right)
\end{pmatrix},\label{eq:RealNormalForm}
\end{equation}
which leads to the polar form (\ref{eq:Polar-Invariance}) with $R\left(r\right)=r\sqrt{f_{r}^{2}\left(r^{2}\right)+f_{i}^{2}\left(r^{2}\right)}$
and $T\left(r\right)=\tan^{-1}\frac{f_{i}\left(r^{2}\right)}{f_{r}\left(r^{2}\right)}$.
This normal form calculation is described in \cite{Szalai2020ISF}.
\item [{F5.}] To find the invariant manifold, we calculate a locally defined
foliation (\ref{eq:Uhat-decoder}) and solve the optimisation problem
(\ref{eq:MAP-Uhat-optim}) to find the decoder $\boldsymbol{W}$ of
invariant manifold $\mathcal{M}$. The initial guess in problem (\ref{eq:MAP-Uhat-optim})
is such that $\boldsymbol{U}^{\perp}=\boldsymbol{U}_{1}^{\perp}$
and $\boldsymbol{B}=\boldsymbol{B}_{1}$. We also need to set a $\kappa$
parameter, which is assumed to be $\kappa=0.2$ throughout the paper.
We have found that results are not sensitive to the value of kappa
except for extreme choices, such as $0,\infty$.
\item [{F6.\label{F6}}] In case of an oscillatory dynamics in a two-dimensional
ROM, we recover the actual instantaneous frequencies and damping ratios
using proposition (\ref{prop:Polar-fr-dm}).
\end{lyxlist}
The procedure for the Koopman eigenfunction calculation is the same
as steps F1-F6, except that $\boldsymbol{S}$ is assumed to be linear.
To identify an autoencoder, we use the same setup as in \cite{Cenedese2022NatComm}.
The numerical representation of the autoencoder is described in appendix
\ref{subsec:AENC-repr}. We carry out the following steps
\begin{lyxlist}{00.00.0000}
\item [{AE1.}] We identify $\boldsymbol{W}_{1}$,\textbf{ $\boldsymbol{S}_{1}$}
as in step F2 and set $\boldsymbol{U}=\boldsymbol{W}_{1}^{T}$ and
$D\boldsymbol{S}\left(\boldsymbol{0}\right)=\boldsymbol{S}_{1}$
\item [{AE2.}] Solve the optimisation problem
\[
\arg\min_{\boldsymbol{U},\boldsymbol{W}_{nl}}\sum_{k=1}^{N}\left\Vert \boldsymbol{y}_{k}\right\Vert ^{-2}\left\Vert \boldsymbol{W}\left(\boldsymbol{U}\boldsymbol{y}_{k}\right)-\boldsymbol{y}_{k}\right\Vert ^{2},
\]
which tries to ensure that $\boldsymbol{W}\circ\boldsymbol{U}$ is
the identity, and finally solve
\[
\arg\min_{\boldsymbol{S}}\sum_{k=1}^{N}\left\Vert \boldsymbol{x}_{k}\right\Vert ^{-2}\left\Vert \boldsymbol{W}\left(\boldsymbol{S}\left(\boldsymbol{U}\left(\boldsymbol{x}_{k}\right)\right)\right)-\boldsymbol{y}_{k}\right\Vert ^{2}.
\]
\item [{AE3.}] Perform a normal form transformation on $\boldsymbol{S}$
by seeking the simplest $\boldsymbol{S}_{n}$ that satisfies $\boldsymbol{W}_{n}\circ\boldsymbol{S}_{n}=\boldsymbol{S}\circ\boldsymbol{W}_{n}$.
This is similar to step F4, except that the normal form is in the
style of the invariance equation of a manifold (\ref{eq:MAP-W-invariance}).
\item [{AE4.}] Same as step F6, but applied to nonlinear map $\boldsymbol{S}_{n}$
and decoder $\boldsymbol{W}\circ\boldsymbol{W}_{n}$.
\end{lyxlist}

\section{\label{sec:Examples}Examples}

We are considering three examples. The first is a caricature model
to illustrate all the techniques discussed in this paper and why certain
techniques fail. The second example is a series of synthetic data
sets with higher dimensionality, to illustrate the methods in more
detail using a polynomial representations of the encoder with HT tensor
coefficients (see appendix \ref{subsec:sparse-poly}). This example
also illustrates two different methods to reconstruct the state space
of the system from a scalar measurement. The final example is a physical
experiment of a jointed beam, where only a scalar signal is recorded
and we need to reconstruct the state space with our previously tested
technique.

\subsection{\label{sec:Caricature}A caricature model}

To illustrate the performance of autoencoders, invariant foliations
and locally defined invariant foliations, we construct a simple two-dimensional
map $\boldsymbol{F}$ with a node-type fixed point using the expression
\begin{equation}
\boldsymbol{F}\left(\boldsymbol{x}\right)=\boldsymbol{V}\left(\boldsymbol{A}\boldsymbol{V}^{-1}\left(\boldsymbol{x}\right)\right),\label{eq:caricature-mod}
\end{equation}
where 
\[
\boldsymbol{A}=\begin{pmatrix}\frac{9}{10} & 0\\
0 & \frac{4}{5}
\end{pmatrix},
\]
the near-identity coordinate transformation is

\begin{align*}
\boldsymbol{V}\left(\boldsymbol{x}\right) & =\begin{pmatrix}\begin{array}{l}
x_{1}+\frac{1}{4}\left(x_{1}^{3}-3\left(x_{1}-1\right)x_{2}x_{1}+2x_{2}^{3}+\left(5x_{1}-2\right)x_{2}^{2}\right)\\
x_{2}+\frac{1}{4}\left(2x_{2}^{3}+\left(2x_{1}-1\right)x_{2}^{2}-x_{1}^{2}\left(x_{1}+2\right)\right)
\end{array}\end{pmatrix},
\end{align*}
and the state vector is defined as $\boldsymbol{x}=\left(x_{1},x_{2}\right)$.
In a neighbourhood of the origin, transformation $\boldsymbol{V}$
has a unique inverse, which we calculate numerically. Map $\boldsymbol{F}$
is constructed such that we can immediately identify the smoothest
(hence unique) invariant manifolds corresponding to the two eigenvalues
of $D\boldsymbol{F}\left(0\right)$ as
\begin{align*}
\mathcal{M}_{\frac{9}{10}} & =\left\{ \boldsymbol{V}\left(z,0\right),z\in\mathbb{R}\right\} ,\\
\mathcal{M}_{\frac{4}{5}} & =\left\{ \boldsymbol{V}\left(0,z\right),z\in\mathbb{R}\right\} .
\end{align*}
We can also calculate the leaves of the two invariant foliations as
\begin{align}
\mathcal{L}_{\frac{9}{10},z} & =\left\{ \boldsymbol{V}\left(z,\overline{z}\right),\overline{z}\in\mathbb{R}\right\} ,\label{eq:caricature-foil-vert}\\
\mathcal{L}_{\frac{4}{5},z} & =\left\{ \boldsymbol{V}\left(\overline{z},z\right),\overline{z}\in\mathbb{R}\right\} .\label{eq:caricature-foil-horiz}
\end{align}
To test the methods, we created 500 times 30 points long trajectories
with initial conditions sampled from a uniform distribution over the
rectangle $\left[-\frac{4}{5},\frac{4}{5}\right]\times\left[-\frac{1}{4},-\frac{1}{4}\right]$
to fit our ROM to.

We first attempt to fit an autoencoder to the data. We assume that
the encoder, decoder and the nonlinear map $\boldsymbol{S}$ are 
\begin{align}
\boldsymbol{U}\left(x_{1},x_{2}\right) & =x_{1},\label{eq:exa-AU-enc}\\
\boldsymbol{W}\left(z\right) & =\left(z,h\left(z\right)\right)^{T},\label{eq:exa-AU-dec}\\
\boldsymbol{S}\left(z\right) & =\lambda\left(z\right),\label{eq:exa-AU-map}
\end{align}
where $\lambda,h$ are polynomials of order-5. Our expressions already
contain the invariant subspace $E=\mathrm{span}\left(1,0\right)^{T}$,
which should make the fitting easier. Finally, we solve the optimisation
problem (\ref{eq:MAP-AE-optim}). The result of the fitting can be
seen in figure \ref{fig:caricature-sim}(a) as depicted by the red
curve. The fitted curve is more dependent on the distribution of data
than the actual position of the invariant manifold, which is represented
by the blue dashed line in figure \ref{fig:caricature-sim}(a). Various
other expressions for $\boldsymbol{U}$ and \textbf{$\boldsymbol{W}$}
were also tried that do not assume the direction of the invariant
subspace $E$ with similar results.

To calculate the invariant foliation in the horizontal direction,
we assume that
\begin{equation}
\left.\begin{array}{rl}
\boldsymbol{U}\left(x_{1},x_{2}\right) & =x_{1}+u\left(x_{1},x_{2}\right)\\
\boldsymbol{S}\left(z\right) & =\lambda z
\end{array}\right\} ,\label{eq:exa-FOL-enc}
\end{equation}
where $u$ is an order-5 polynomial which lacks the constant and linear
terms. The exact expression of $\boldsymbol{U}$ is not a polynomial,
because it is the second coordinate of the inverse of function $\boldsymbol{V}$.
The fitting is carried out by solving the optimisation problem (\ref{eq:MAP-U-optim}).
The result can be seen in figure \ref{fig:caricature-sim}(b), where
the red curves are contour plots of the identified encoder $\boldsymbol{U}$
and the dashed blue lines are the leaves as defined by equation (\ref{eq:caricature-foil-vert}).
Figure \ref{fig:caricature-sim}(c) is produced in the same way as
\ref{fig:caricature-sim}(b), except that the encoder is defined as
$\boldsymbol{U}\left(x_{1},x_{2}\right)=x_{2}+h\left(x_{1},x_{2}\right)$
and the blue lines are the leaves given by (\ref{eq:caricature-foil-horiz}).

As we have discussed in section \ref{subsec:LocallyAccurateEncoder},
a locally defined encoder can also be constructed from a decoder.
In the expression of the encoder (\ref{eq:Uhat-decoder}) we take
\[
\boldsymbol{W}_{0}\left(z\right)=h\left(z\right)
\]
and $\boldsymbol{U}^{\perp}=\left(0,1\right)$, where $h$ is an order-9
polynomial without constant and linear terms. The expressions for
$\boldsymbol{U}$ and \textbf{$\boldsymbol{S}$} were already found
as (\ref{eq:exa-FOL-enc}), hence our approximate encoder becomes
\[
\hat{\boldsymbol{U}}\left(\boldsymbol{x}\right)=x_{2}-h\left(x_{1}+u\left(x_{1},x_{2}\right)\right).
\]
We solve the optimisation problem (\ref{eq:MAP-Uhat-optim}) with
$\kappa=0.13$. We do not reconstruct the decoder $\boldsymbol{W}$,
as it is straightforward to plot the level surfaces of $\hat{\boldsymbol{U}}$
directly. The result can be seen in figure \ref{fig:caricature-sim}(d),
where the green line is the approximate invariant manifold (the zero
level surface of $\hat{\boldsymbol{U}}$) and the red lines are other
level surfaces of $\hat{\boldsymbol{U}}$.

In conclusion, this simple example shows that only invariant foliations
can be fitted to data, autoencoders give spurious results.

\begin{figure}
\begin{centering}
\includegraphics[width=0.35\linewidth]{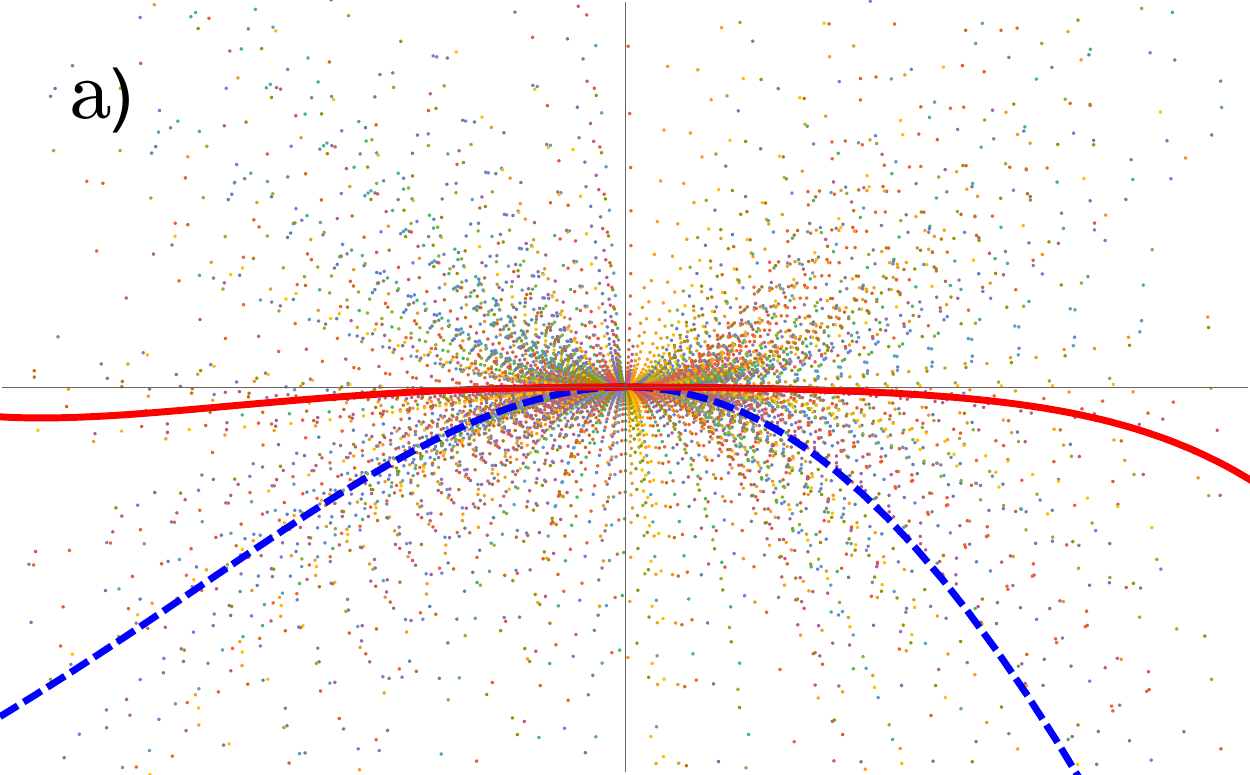}\includegraphics[width=0.35\linewidth]{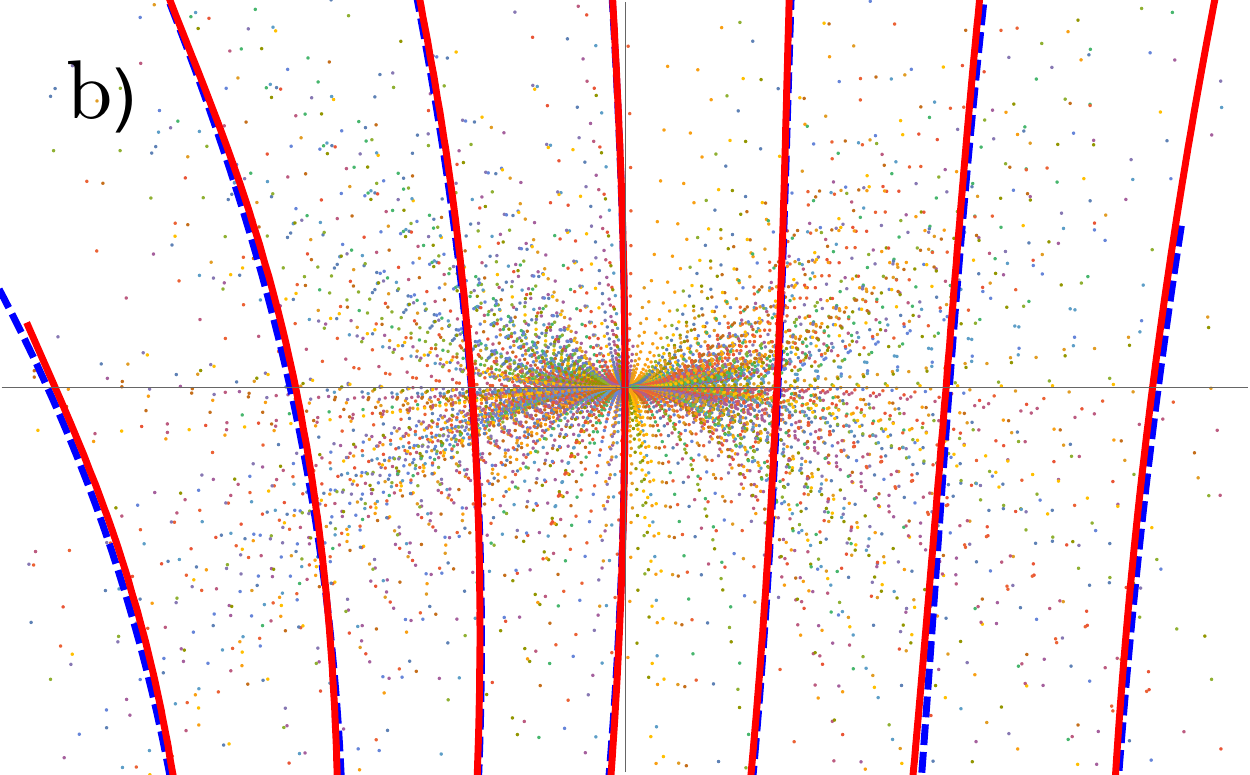}\\
\includegraphics[width=0.35\linewidth]{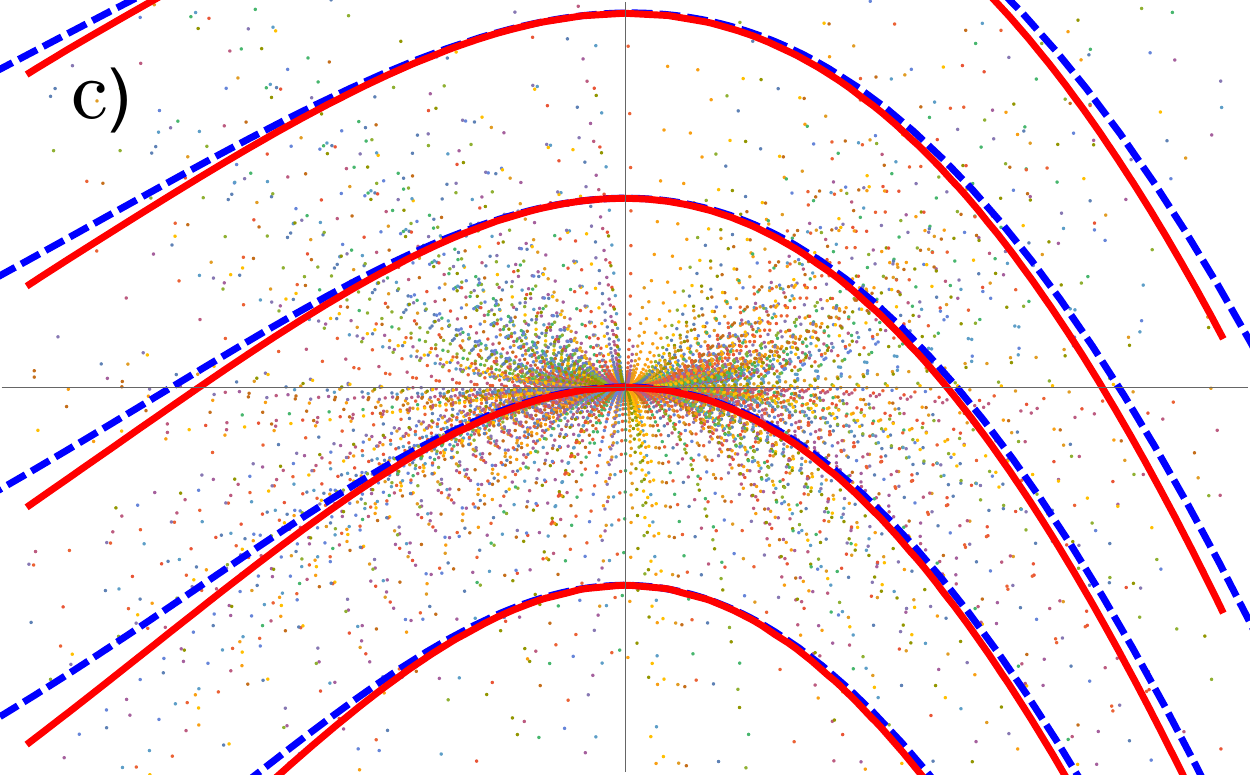}\includegraphics[width=0.35\linewidth]{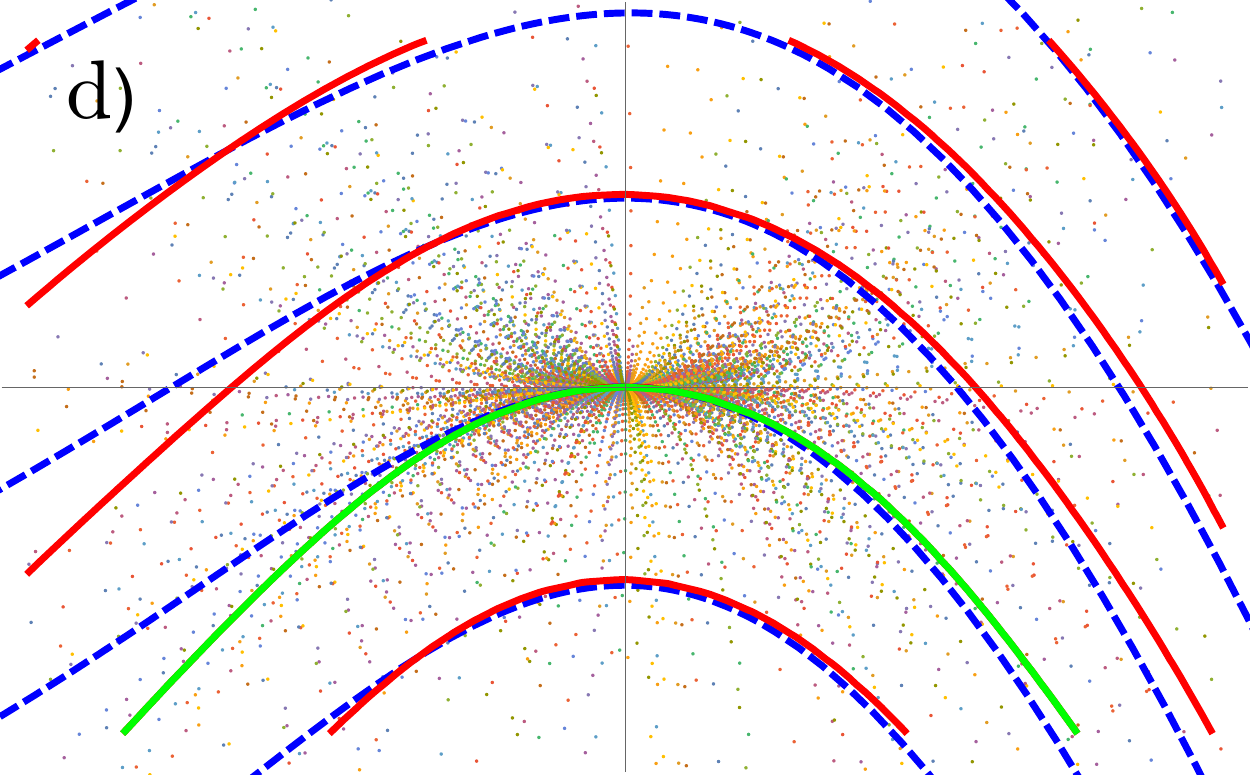}
\par\end{centering}
\caption{\label{fig:caricature-sim}Identifying invariant objects in equation
(\ref{eq:caricature-mod}). The data contains 500 trajectories of
length 30 with initial conditions picked from a uniform distribution;
a) fitting an autoencoder (red continuous curve) does not reproduce
the invariant manifold (blue dashed curve), instead it follows the
distribution of data; (b) invariant foliation in the horizontal direction;
c) invariant foliation in the vertical direction; d) an invariant
manifold is calculated as the leaf of a locally defined invariant
foliation. The green line is the invariant manifold. Each diagram
represent the box $\left[-\frac{4}{5},\frac{4}{5}\right]\times\left[-\frac{1}{4},-\frac{1}{4}\right]$,
the axes labels are intentionally hidden.}
\end{figure}

\subsection{\label{subsec:10dimsys-example}A ten-dimensional system}

To create a numerically challenging example, we construct a ten-dimensional
differential equation from five decoupled second-order nonlinear oscillators
using two successive coordinate transformations. The system of decoupled
oscillators is denoted by $\dot{\boldsymbol{x}}=\boldsymbol{f}_{0}\left(\boldsymbol{x}\right)$,
where the state variable is in the form of 
\[
\boldsymbol{x}=\left(r_{1},\ldots,r_{5},\theta_{1},\ldots,\theta_{5}\right)
\]
and the dynamics is given by 
\begin{equation}
\begin{array}{ll}
\dot{r}_{1}=-\frac{1}{500}r_{1}+\frac{1}{100}r_{1}^{3}-\frac{1}{10}r_{1}^{5}, & \dot{\theta}_{1}=1+\frac{1}{4}r_{1}^{2}-\frac{3}{10}r_{1}^{4},\\
\dot{r}_{2}=-\frac{\mathrm{e}}{500}r_{2}-\frac{1}{10}r_{2}^{5}, & \dot{\theta}_{2}=\mathrm{e}+\frac{3}{20}r_{2}^{2}-\frac{1}{5}r_{2}^{4},\\
\dot{r}_{3}=-\frac{1}{50}\sqrt{\frac{3}{10}}r_{3}+\frac{1}{100}r_{3}^{3}-\frac{1}{10}r_{3}^{5},\quad & \dot{\theta}_{3}=\sqrt{30}+\frac{9}{50}r_{3}^{2}-\frac{19}{100}r_{3}^{4},\\
\dot{r}_{4}=-\frac{1}{500}\pi^{2}r_{4}+\frac{1}{100}r_{4}^{3}-\frac{1}{10}r_{4}^{5}, & \dot{\theta}_{4}=\pi^{2}+\frac{4}{25}r_{4}^{2}-\frac{17}{100}r_{4}^{4},\\
\dot{r}_{5}=-\frac{13}{500}r_{5}+\frac{1}{100}r_{5}^{3}, & \dot{\theta}_{5}=13+\frac{4}{25}r_{5}^{2}-\frac{9}{50}r_{5}^{4}.
\end{array}\label{eq:10dim-model}
\end{equation}
The first transformation brings the polar form of equation (\ref{eq:10dim-model})
into Cartesian coordinates using the transformation $\boldsymbol{y}=\boldsymbol{g}\left(\boldsymbol{x}\right)$,
which is defined by $y_{2k-1}=r_{k}\cos\theta_{k}$ and $y_{2k}=r_{k}\sin\theta_{k}$.
Finally, we couple all variables using the second nonlinear transformation
$\boldsymbol{y}=\boldsymbol{h}\left(\boldsymbol{z}\right)$, which
reads 
\begin{equation}
\begin{array}{rlrl}
y_{1} & =z_{1}+z_{3}-\frac{1}{12}z_{3}z_{5}, & y_{2} & =z_{2}-z_{3},\\
y_{3} & =z_{3}+z_{5}-\frac{1}{12}z_{5}z_{7}, & y_{4} & =z_{4}-z_{5},\\
y_{5} & =z_{5}+z_{7}+\frac{1}{12}z_{7}z_{9}, & y_{6} & =z_{6}-z_{7},\\
y_{7} & =z_{7}+z_{9}-\frac{1}{12}z_{1}z_{9}, & y_{8} & =z_{8}-z_{9},\\
y_{9} & =z_{9}+z_{1}-\frac{1}{12}z_{3}z_{1},\quad & y_{10} & =z_{10}-z_{1},
\end{array}\label{eq:10dim-transform}
\end{equation}
and where $\boldsymbol{y}=\left(y_{1},\ldots,y_{10}\right)$ and $\boldsymbol{z}=\left(z_{1},\ldots,z_{10}\right)$.
The two transformations give us the differential equation $\dot{\boldsymbol{z}}=\boldsymbol{f}\left(\boldsymbol{z}\right)$,
where
\begin{equation}
\boldsymbol{f}\left(\boldsymbol{z}\right)=\left[D\boldsymbol{g}^{-1}\left(\boldsymbol{h}\left(\boldsymbol{z}\right)\right)D\boldsymbol{h}\left(\boldsymbol{z}\right)\right]^{-1}\boldsymbol{f}_{0}\left(\boldsymbol{g}^{-1}\left(\boldsymbol{h}\left(\boldsymbol{z}\right)\right)\right).\label{eq:10dim-finmod}
\end{equation}
The natural frequencies of our system at the origin are
\[
\omega_{1}=1,\omega_{2}=\mathrm{e},\omega_{3}=\sqrt{30},\omega_{4}=\pi^{2},\omega_{5}=13
\]
and the damping ratios are the same $\zeta_{1}=\cdots=\zeta_{5}=1/500$.

We select the first natural frequency to test various methods. We
also test the methods on three types of data. Firstly, full state
space information is used, secondly the state space is reconstructed
from the signal $\xi_{k}=\frac{1}{10}\sum_{j=1}^{10}z_{k,j}$ using
principal component analysis (PCA) as described in appendix \ref{subsec:PCA}
with 16 PCA components, finally the state space is reconstructed from
$\xi_{k}$ using a discrete Fourier transform (DFT) as described in
appendix \ref{subsec:DFT}. When data is recorded in state space form,
$1000$ trajectories $16$ points long each with time step $\Delta T=0.1$
were created by numerically solving (\ref{eq:10dim-finmod}). Initial
conditions were sampled from unit balls of radius $0.8$, $1.0$,
$1.2$ and $1.4$ about the origin. The Euclidean norm of the initial
conditions were uniformly distributed. The four data sets are labelled
ST-1, ST-2, ST-3, ST-4 in the diagrams. For state space reconstruction,
100 trajectories, 3000 points each, with time step $\Delta T=0.01$
were created by numerically solving (\ref{eq:10dim-finmod}). The
initial conditions for this data was similarly sampled from unit balls
of radius $0.8$, $1.0$, $1.2$ and $1.4$ about the origin, such
that the Euclidean norm of the initial conditions are uniformly distributed.
The PCA reconstructed data are labelled PCA-1, PCA-2, PCA-3, PCA-4
and the DFT reconstructed data are labelled DFT-1, DFT-2, DFT-3, DFT-4.

The amplitude for each ROM is calculated as $A\left(r\right)=\sqrt{\frac{1}{2\pi}\int\left(\boldsymbol{w}^{\star}\cdot\boldsymbol{W}\left(r,\theta\right)\right)^{2}\mathrm{d}\theta}$,
where $\boldsymbol{w}^{\star}=\frac{1}{10}\left(1,1,\ldots,1\right)$
for the state-space data and $\boldsymbol{w}^{\star}$ is calculated
in appendices \ref{subsec:PCA}, \ref{subsec:DFT} when state-space
reconstruction is used. We can also attach an amplitude to each data
point $\boldsymbol{x}_{k}$ through the encoder and the decoder. If
the ROM assumes the normal form (\ref{eq:RealNormalForm}), the radial
parameter $r$ is simply calculated as $r_{k}=\left\Vert \boldsymbol{U}\left(\boldsymbol{x}_{k}\right)\right\Vert $,
hence the amplitude is $A\left(r_{k}\right)$. 

\begin{figure}
\begin{centering}
\includegraphics[width=0.99\linewidth]{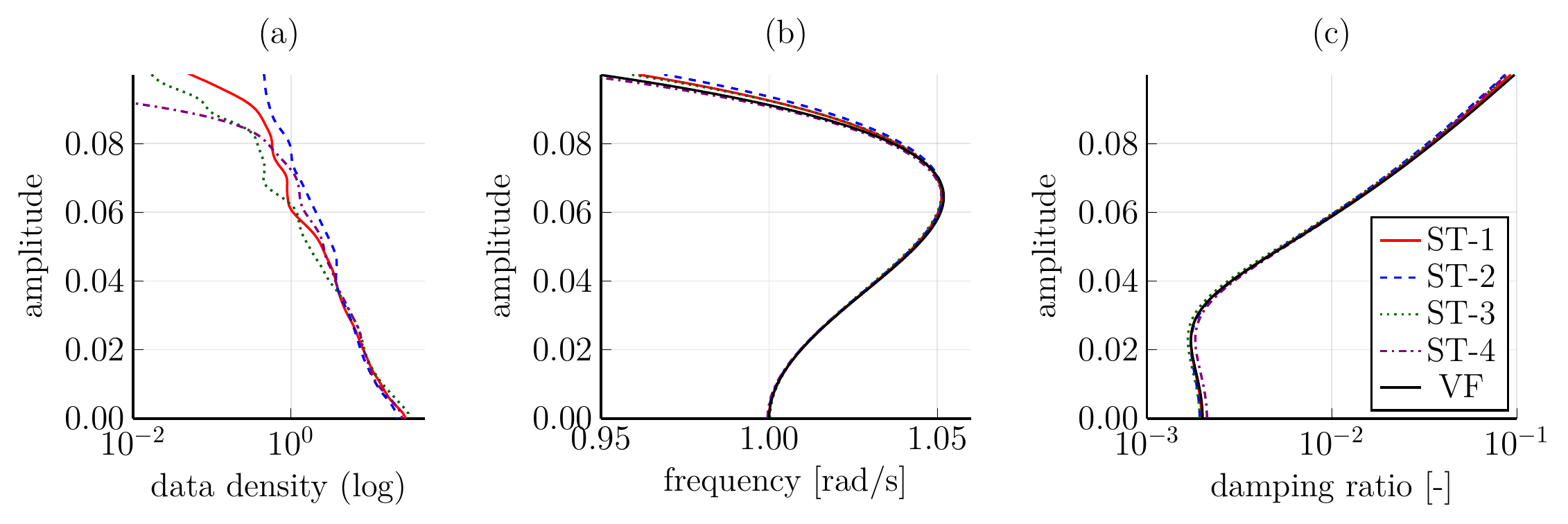}\\
\includegraphics[width=0.99\linewidth]{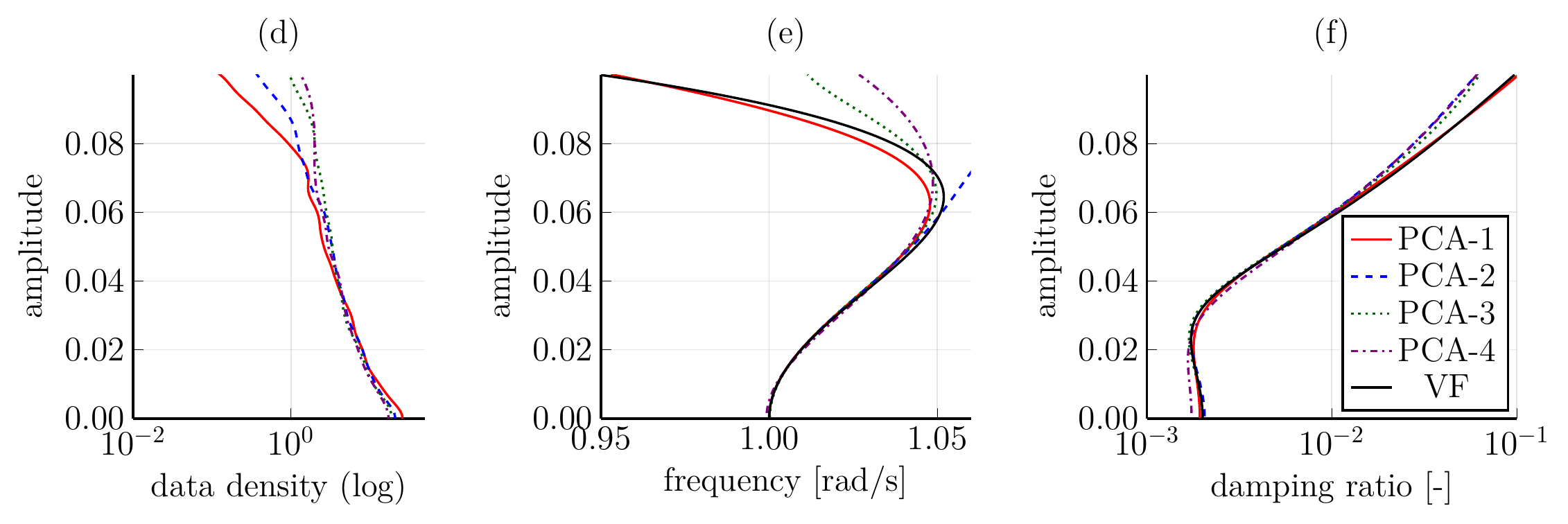}\\
\includegraphics[width=0.99\linewidth]{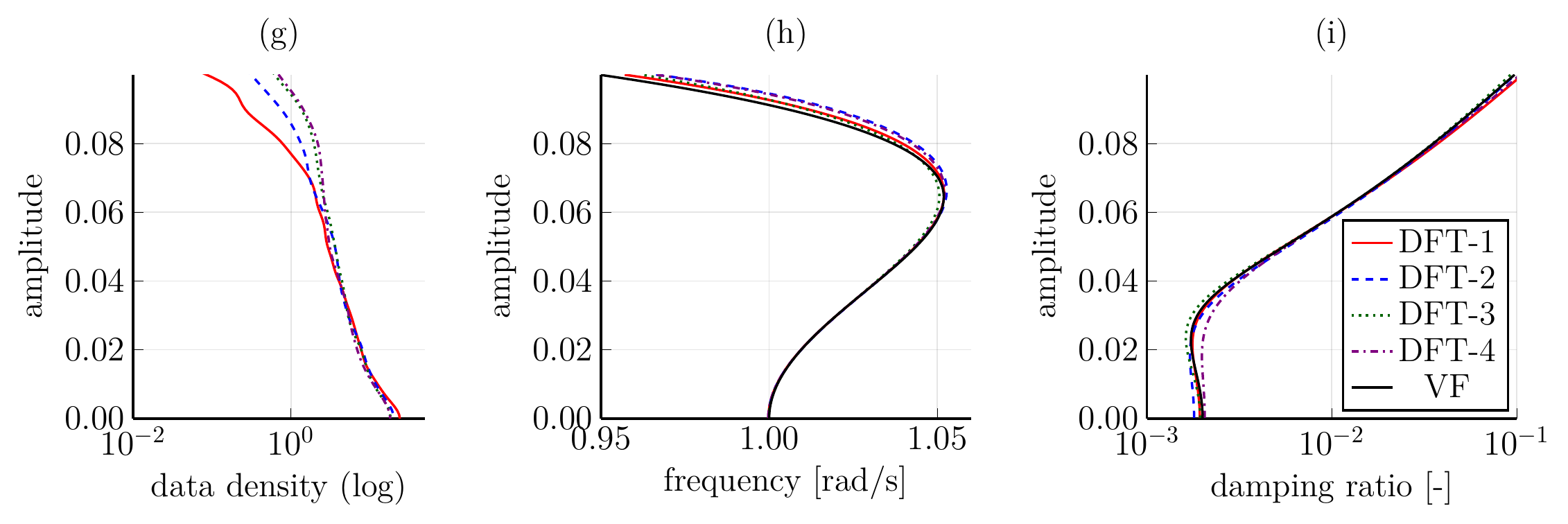}
\par\end{centering}
\caption{\label{fig:10dim-foliations}Instantaneous frequencies and damping
ratios of differential equation (\ref{eq:10dim-finmod}) using invariant
foliations. The first column shows the data density with respect to
the amplitude $A\left(r\right)$, the second column is the instantaneous
frequency and the third column is the instantaneous damping. The first
row corresponds to state space data, the second row shows PCA reconstructed
data and the third row is DFT reconstructed data. The data density
is controlled by sampling initial conditions from different sized
neighbourhoods of the origin.}
\end{figure}
\begin{figure}
\begin{centering}
\includegraphics[width=1\linewidth]{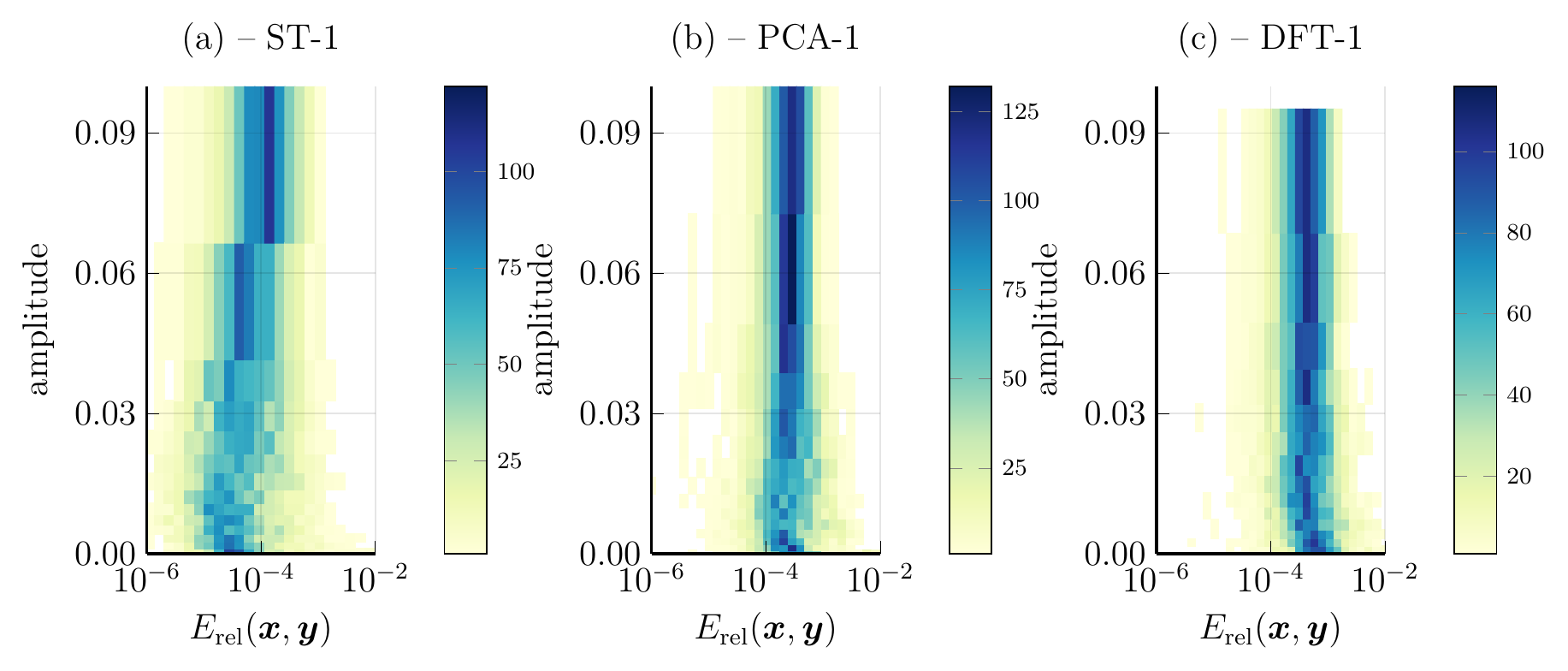}
\par\end{centering}
\caption{\label{fig:10dim-accuracy}Histograms of the fitting error (\ref{eq:FittingError})
for the data sets ST-1, PCA-1 and DFT-1 as a function of vibration
amplitude. The distribution appears uniform with respect to the amplitude.}

\end{figure}
Figure \ref{fig:10dim-foliations} shows the result of our calculation
for the three types of data. In the first column data density is displayed
with respect to amplitude $A\left(r\right)$ in the ROM. Lower amplitudes
have higher densities, because trajectories exponentially converge
to the origin. In figure \ref{fig:10dim-foliations} we also display
the identified instantaneous frequencies and damping ratios. The results
are then compared to the analytically calculated frequencies and damping
ratios labelled by VF.

State space data gives the closest match to the analytical reference
(labelled as VF). We find that the PCA method cannot embed the data
in a 10-dimesional space, only an 16-dimensional embedding is acceptable,
but still inaccurate. Using a perfect reproducing filter bank (DFT)
yields better results, probably because the original signal can be
fully reconstructed and we expect a correct state-space reconstruction
at small amplitudes. Indeed, the PCA results diverge at higher amplitudes,
where the state space reconstruction is no longer valid. The author
has also tried non-optimal delay embedding, with inferior results.
None of the techniques had any problem with the less the challenging
Shaw-Pierre example \cite{ShawPierre,Szalai2020ISF} (data not shown).

Figure \ref{fig:10dim-accuracy} shows the accuracy of the data fit
as a function of the amplitude $A\left(r\right)$. The relative error
displayed is defined by 
\begin{equation}
E_{rel}\left(\boldsymbol{x},\boldsymbol{y}\right)=\frac{\left\Vert \boldsymbol{S}\left(\boldsymbol{U}\left(\boldsymbol{x}\right)\right)-\boldsymbol{U}\left(\boldsymbol{y}\right)\right\Vert }{\left\Vert \boldsymbol{U}\left(\boldsymbol{x}\right)\right\Vert }.\label{eq:FittingError}
\end{equation}
It turns out that the error is roughly independent of the amplitude,
except for data set ST-1, which has lower errors at low amplitudes.
The accuracy is the highest for state space data, while the accuracy
of the DFT reconstructed data is slightly worse than for the PCA reconstructed
data. In contrast, the comparison with the analytically calculated
result is worse for the PCA data than for the DFT data. The reason
is that PCA reconstruction cannot exactly reproduce the original signal
from the identified components while the DFT method can.

\begin{figure}
\begin{centering}
\includegraphics[width=0.99\columnwidth]{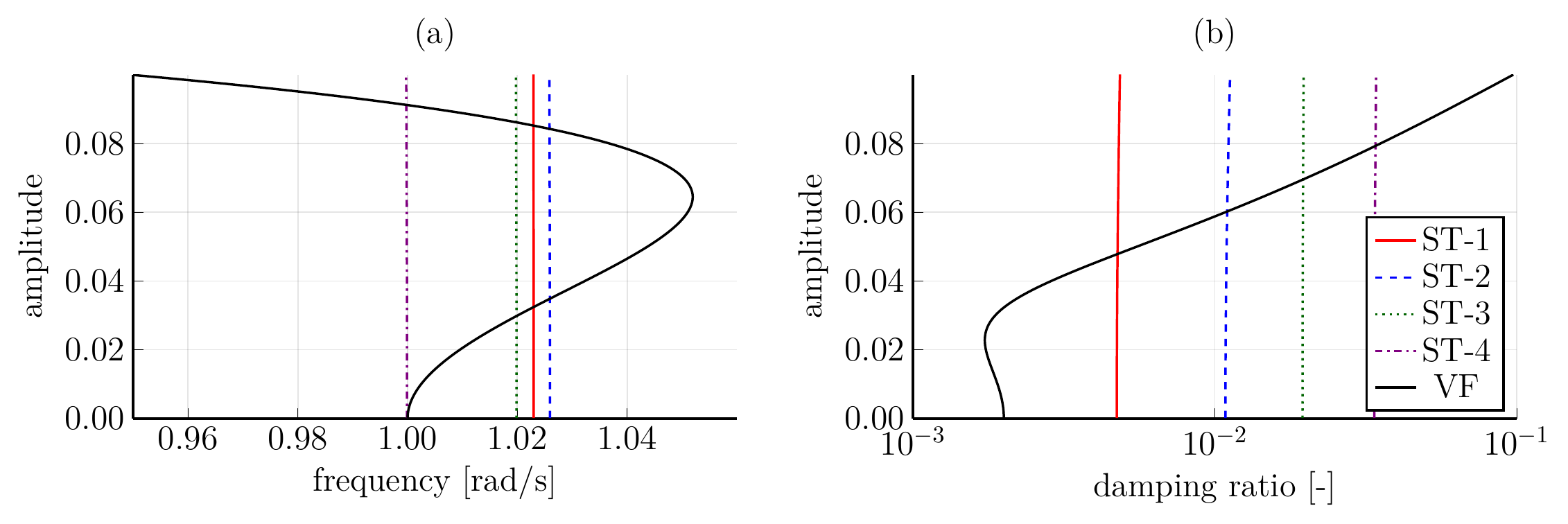}
\par\end{centering}
\caption{\label{fig:10dim-Koopman}ROM by Koopman eigenfunctions. The same
quantities are calculated as in figure \ref{fig:10dim-foliations},
except that map $\boldsymbol{S}$ is assumed to be linear. There is
some variation in the frequencies and damping ratios with respect
to the amplitude due to the corrections in section \ref{sec:freq-damp},
but accurate values could not be recovered as the linear approximation
does not account for near internal resonances, as in formula (\ref{eq:Internal-nonresonance}).}
\end{figure}
When restricting map $\boldsymbol{S}$ to be linear, we are identifying
Koopman eigenfunctions. Despite that linear dynamics is identified
we should be able to reproduce the nonlinearities as illustrated in
section \ref{sec:freq-damp}. However, we also have near internal
resonances as per equation (\ref{eq:Internal-nonresonance}), which
make certain terms of encoder $\boldsymbol{U}$ large, which are difficult
to find by optimisation. The result can be seen in figure \ref{fig:10dim-Koopman}.
The identified frequencies and damping ratios show little variation
with amplitude and mostly capture the average of the reference values.
Fitting the Koopman eigenfunction achieves maximum and average values
of $E_{rel}$ at $8.66\%$ and $0.189\%$ over data set ST-1, respectively.
Better accuracy could be achieved using higher rank HT tensor coefficients
in the encoder, which would significantly increase the number of model
parameters. In contrast, fitting the invariant foliation to the same
data set yields maximum and the average values of $E_{rel}$ at $2.21\%$
and $0.0118\%$, respectively (also illustrated in figure \ref{fig:10dim-accuracy}(a)).
This better accuracy is achieved with a small number of extra parameters
that make the two-dimensional map $\boldsymbol{S}$ nonlinear.

\begin{figure}
\begin{centering}
\includegraphics[width=0.99\linewidth]{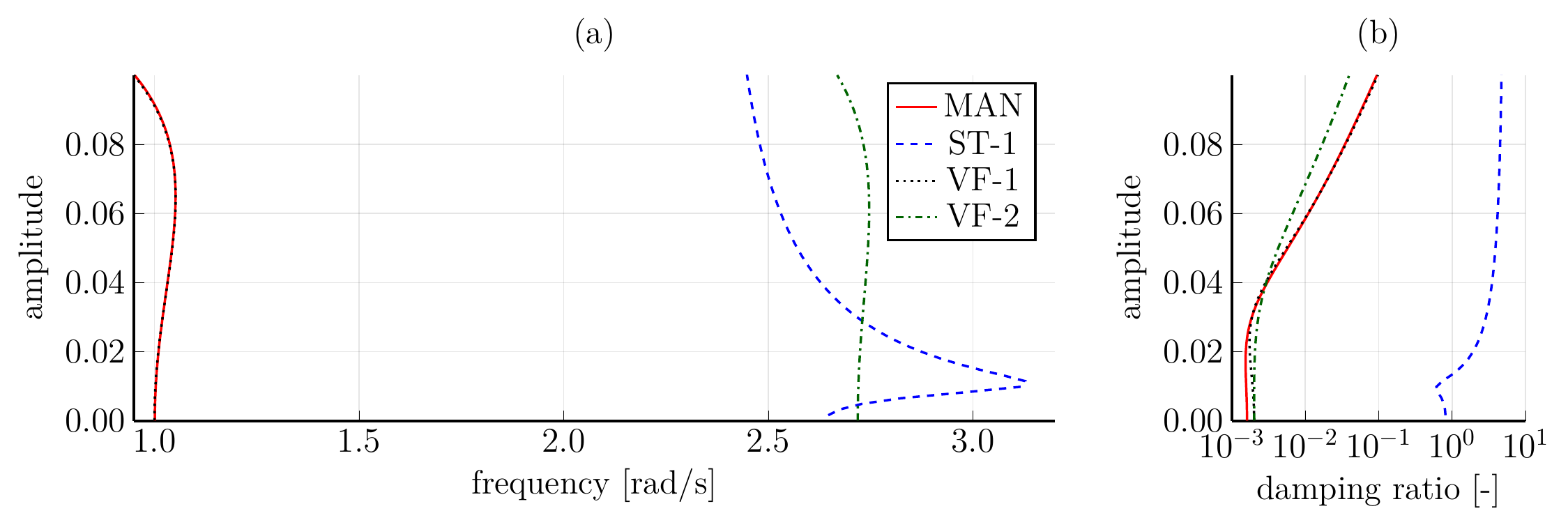}
\par\end{centering}
\caption{\label{fig:10dim-autoencoder}Data analysis by autoencoder. An autoencoder
can recover system dynamics if all data is on the invariant manifold.
For the solid line MAN, data was artificially forced to be on an a-priori
calculated invariant manifold. However if the data is not on an invariant
manifold, such as for data set ST-1, the autoencoder calculation is
meaningless. The dotted line VF-1 represents the analytic calculation
for the first natural frequency, the dash-dotted VF-2 depicts the
analytic calculation for the second natural frequency of vector field
(\ref{eq:10dim-finmod}).}
\end{figure}
Knowing that autoencoders are only useful if all the dynamics is on
the manifold, we have synthetically created data consisting of trajectories
with initial conditions from the invariant manifold of the first natural
frequency. We used 800 trajectories, 24 points each with time-step
$\Delta t=0.2$ starting on the manifold. Fitting an autoencoder to
this data yields a good match in figure (\ref{fig:10dim-autoencoder}),
the corresponding lines are labelled MAN. Then we tried dataset ST-1,
that matched the reference best when calculating an invariant foliation.
However, our data does not lie on a manifold and it is impossible
to make $\boldsymbol{W}\circ\boldsymbol{U}$ close to the identity
on our data. In fact the result (blue dashed line) is closer to the
second mode of vibration (green dash-dotted curve), which seems to
be the dominant vibration of the system.

\subsection{Jointed beam}

\begin{figure}
\begin{centering}
\includegraphics[width=0.99\linewidth]{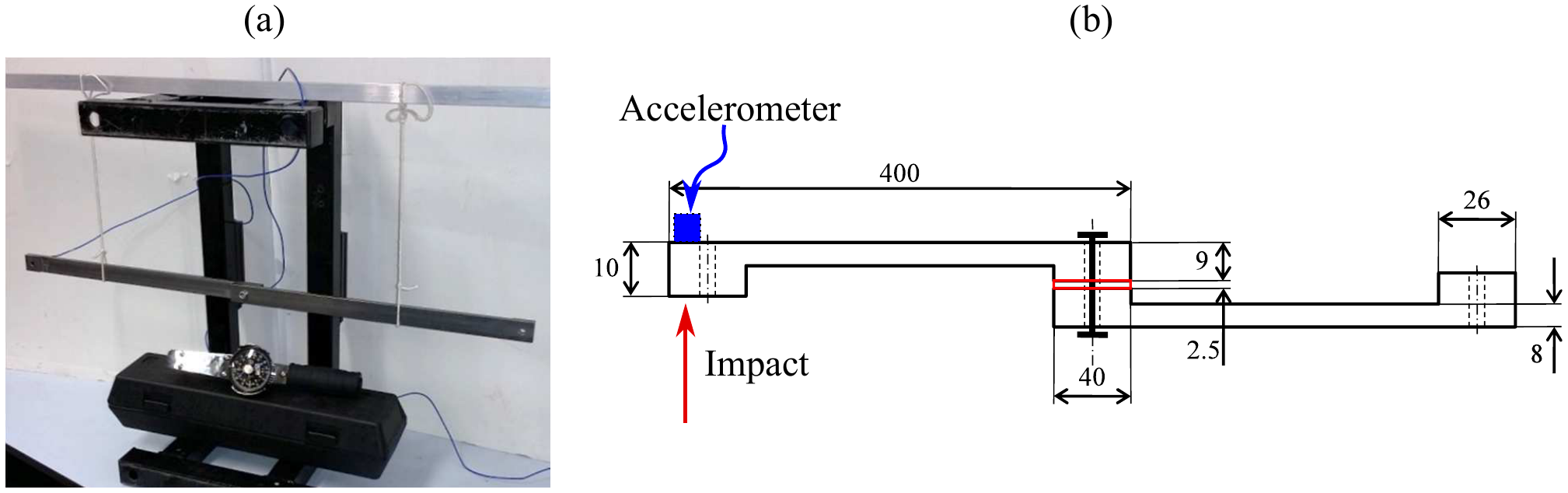}
\par\end{centering}
\caption{\label{fig:JointedBeam}(a) Experimental setup. (b) Schematic of the
jointed beam. The width of the beam (not shown) is 25mm. All measurements
are in millimetres.}
\end{figure}
It is challenging to accurately model mechanical friction, hence data
oriented methods can play a huge role in identifying dynamics affected
by frictional forces. Therefore we analyse the data published in~\cite{Titurus2016}.
The experimental setup can be seen in figure \ref{fig:JointedBeam}.
The two halves of the beam were joined together with an M6 bolt. The
two interfaces of the beams were sandblasted to increase friction
and a polished steel plate was placed between them, finally the bolt
was tightened using four different torques: minimal torque so that
the beam does not collapse under its own weight (denoted as 0~Nm),
1~Nm, 2.1~Nm and 3.1~Nm. The free vibration of the steel beam was
recorded using an accelerometer placed at the end of the beam. The
vibration was initiated using an impact hammer at the position of
the accelerometer. Calibration data for the accelerometer is not available.
For each torque value a number of 20 seconds long signals were recorded
with sampling frequency of 2048~Hz. The impacts were of different
magnitude so that the amplitude dependency of the dynamics could be
tracked. In~\cite{Titurus2016}, a linear model was fitted to each
signal and the first five vibration frequencies and damping ratios
were identified. These are represented by various markers in figure
\ref{fig:JointedBeam-ROM}. In order to make a connection between
the peak impact force and the instantaneous amplitude we also calculated
the peak root mean square (RMS) amplitude for signals with the largest
impact force for each tightening torque and found that the average
conversion factor between the peak RMS amplitude and the peak impact
force was 443, which we used to divide the peak force and plot the
equivalent peak RMS in figure \ref{fig:JointedBeam-ROM}. We also
band filtered each trajectory with a 511 point FIR filter with 3 dB
points at 30 Hz and 75 Hz and estimated the instantaneous frequency
and damping ratios from two consecutive vibration cycles, which are
then drawn as thick semi-transparent lines in figure \ref{fig:JointedBeam-ROM}
for each trajectory. It is worth noting that the more friction there
is in the system, the less reproducible the frequencies and damping
ratios become when using short trajectory segments for estimation.

To calculate the invariant foliation we used a 10-dimensional DFT
reconstructed state space, to include all five captured frequencies,
as described is appendix \ref{subsec:DFT}. We chose $\kappa=0.2$
in the optimisation problem (\ref{eq:MAP-Uhat-optim}) when finding
the invariant manifold. The result can be seen in figure \ref{fig:JointedBeam-ROM}.
Since we do not have the ground truth for this system it is not possible
to tell which method is more accurate, especially that our naive alternative
calculation (thick semi-transparent lines) displays a wide spread
of results.

\begin{figure}
\begin{centering}
\includegraphics[width=0.99\linewidth]{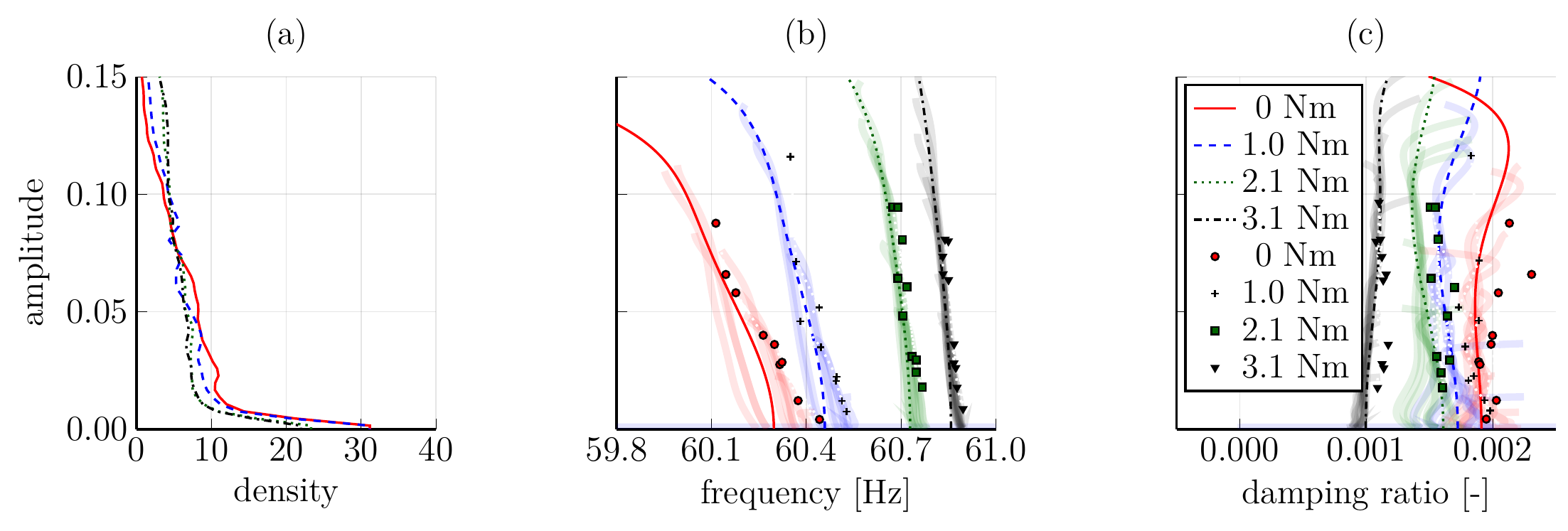}
\par\end{centering}
\caption{\label{fig:JointedBeam-ROM}Instantaneous frequencies and damping
ratios of the jointed beam set-up in figure \ref{fig:JointedBeam}.
Solid red lines correspond to the invariant foliation of the system
with minimal tightening torque, blue dashed lines with tightening
torque of 2.1 Nm and green dotted lines with tightening torque of
3.1 Nm. The markers of the same colour show results calculated in
\cite{Titurus2016} using curve fitting. (a) data density with respect
to amplitude $A\left(r\right)$, (b) instantaneous frequency, (c)
instantaneous damping ratio.}
\end{figure}
\begin{figure}
\begin{centering}
\includegraphics[width=1\linewidth]{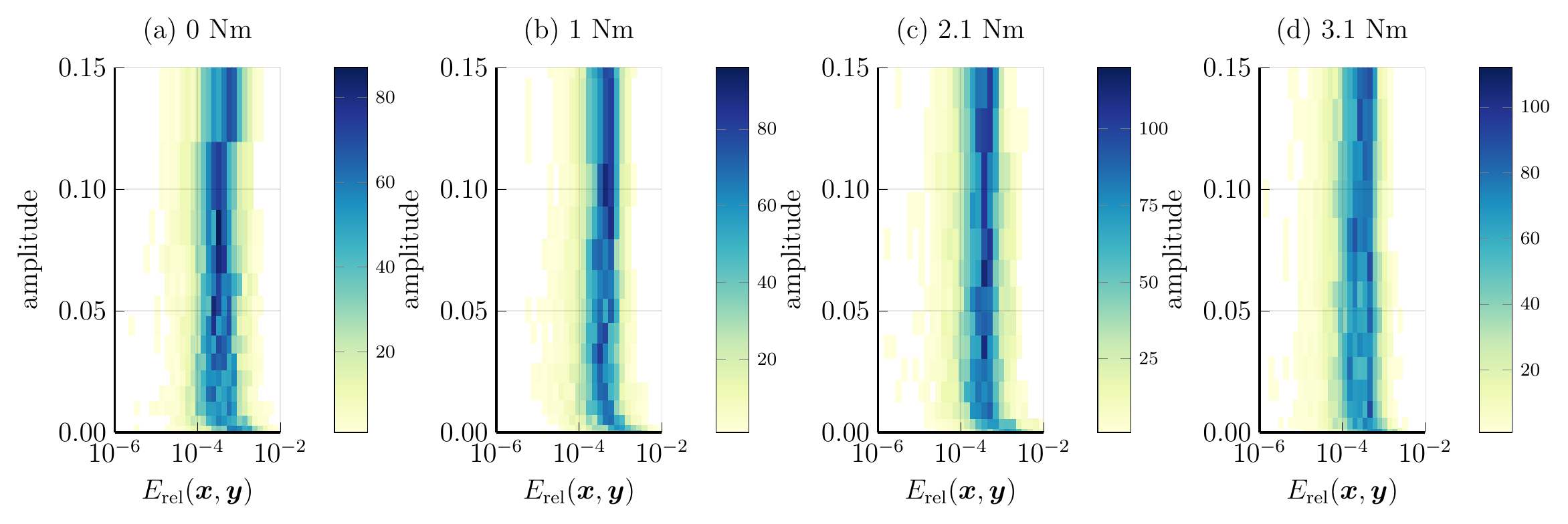}
\par\end{centering}
\caption{\label{fig:JointedBeam-Histogram}Histograms of the fitting error
(\ref{eq:FittingError}) for all four tightening torques of the jointed
beam. Note that the errors look very similar despite that repeatability
for lower tightening torques appear worse in figure \ref{fig:JointedBeam-ROM}.}
\end{figure}
\begin{figure}
\begin{centering}
\includegraphics[width=0.49\linewidth]{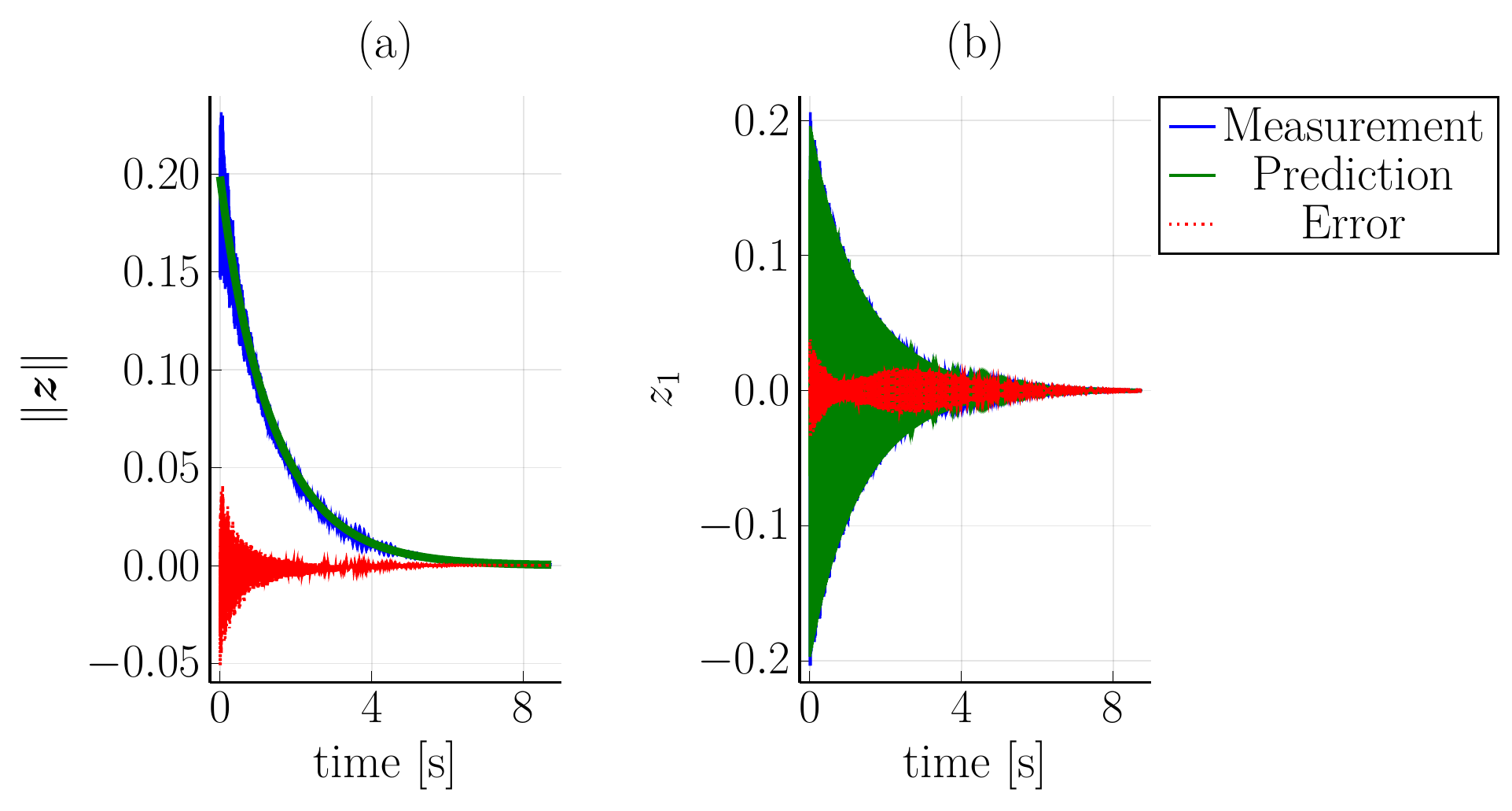}\includegraphics[width=0.49\linewidth]{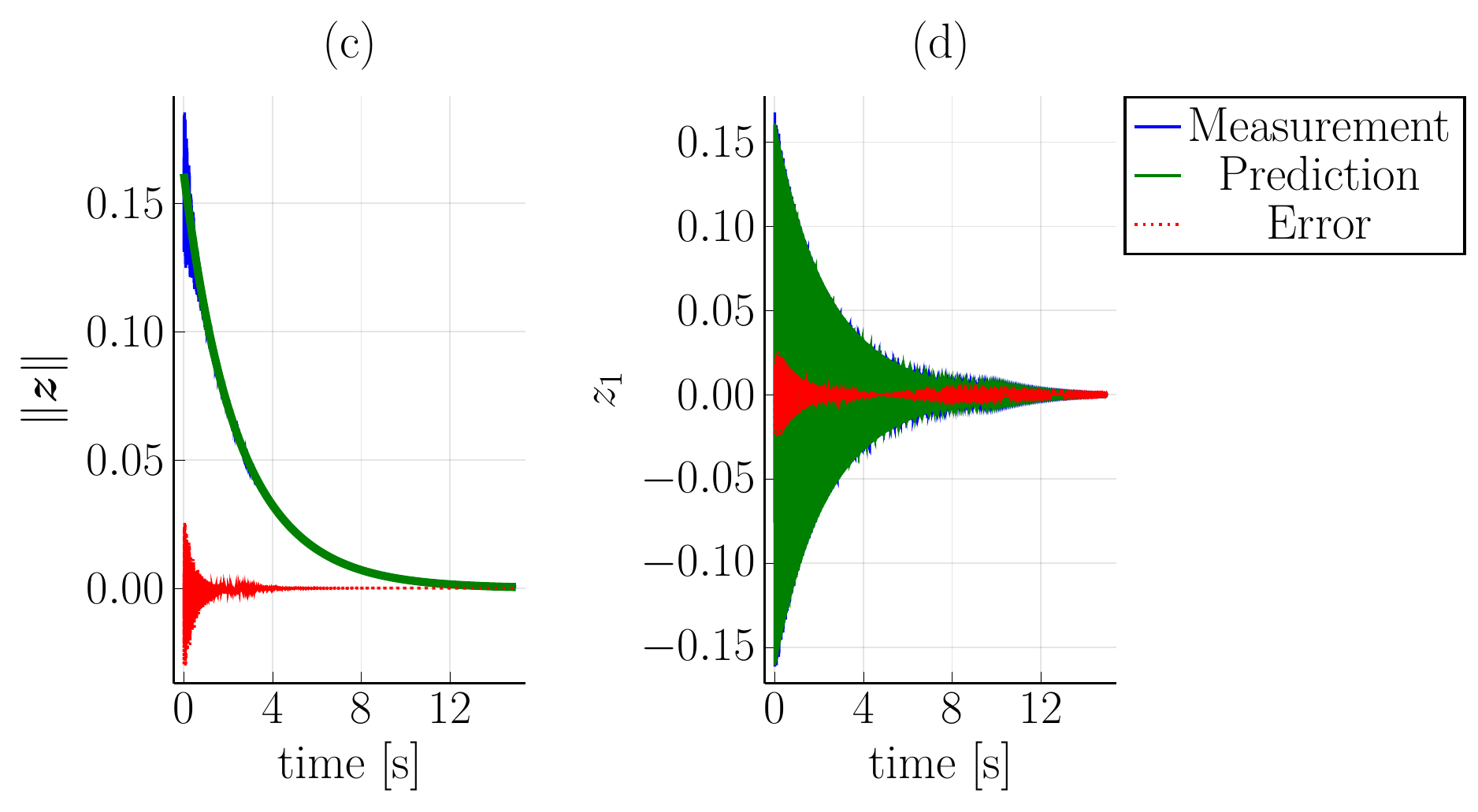}
\par\end{centering}
\caption{\label{fig:JointedBeam-reconstruction}Reconstruction error of the
foliation. (a,b) Reconstructing the first signal of the series of
experimental data with 0 Nm tightening torque. (c,d) Reconstructing
the first signal of the series of experimental data with 3.1 Nm tightening
torque. The compared values are calculated through the encoder $\boldsymbol{z}=\boldsymbol{U}\left(\boldsymbol{x}_{k}\right)$
and the reconstructed values are from the iteration $\boldsymbol{z}_{k+1}=\boldsymbol{S}\left(\boldsymbol{z}_{k}\right)$.
The amplitude error ($\left\Vert \boldsymbol{z}\right\Vert $) is
minimal, however, over time phase error accumulates, hence direct
comparison of a coordinate ($z_{1}$) can look unfavourable.}
\end{figure}

The fitting error to the invariant foliation can be assessed from
the histograms in figure \ref{fig:JointedBeam-Histogram}. The distribution
of the error is very similar to the synthetic model, which is nearly
uniform with respect to the vibration amplitude. It is also clear
that there is no real difference in the fitting error for different
tightening torques, which indicates that frictional dynamics can be
accurately characterised using invariant foliations.

As a final test, we also assess whether the measured signal is reproduced
by the invariant foliation $\left(\boldsymbol{U},\boldsymbol{S}\right)$
in figure \ref{fig:JointedBeam-reconstruction}. For this we apply
the encoder $\boldsymbol{U}$ to our original signal $\boldsymbol{x}_{k}$,
$k=1,\ldots,N$ and compare this signal to the one produced by the
recursion $\boldsymbol{z}_{k+1}=\boldsymbol{S}\left(\boldsymbol{z}_{k}\right)$,
where $\boldsymbol{z}_{1}=\boldsymbol{U}\left(\boldsymbol{x}_{1}\right)$.
The instantaneous amplitude error in both cases of figures \ref{fig:JointedBeam-reconstruction}(a,c)
was mostly due to high frequency oscillations that was not completely
filtered out by the encoder $\boldsymbol{U}$ at the high amplitudes.
The phase error seemed to have only accumulated for the lowest tightening
torque (0 Nm) in figure \ref{fig:JointedBeam-reconstruction}(b).
This is to be expected over long trajectories, since the fitting procedure
only minimises the prediction error for a single time step. Unfortunately,
accumulating phase error is rarely shown in the literature, where
only short trajectories are compared, in contrast to the $28685$
and $25149$ time-steps that are displayed in figures \ref{fig:JointedBeam-reconstruction}(b,d),
respectively.

\section{Discussion}

The main conclusion of this study is that only invariant foliations
are suitable for ROM identification from off-line data. Using an invariant
foliation avoids the need to use resonance decay \cite{EHRHARDT2016612},
or waiting for the signal to settle near the most attracting invariant
manifold \cite{Cenedese2022NatComm}, thereby throwing away valuable
data. Using invariant foliations can make use of unstructured data
with arbitrary initial conditions, such as impact hammer tests. Invariant
foliations produce genuine ROMs and not only parametrise a-priori
known invariant manifolds, like other methods \cite{Cenedese2022NatComm,Champion2019Autoencoder,Yair2017DiffusionNormal}.
We have shown that the high-dimensional function required to represent
an encoder can be represented by polynomials with compressed tensor
coefficients, which significantly reduces the computational and memory
costs. Compressed tensors are also amenable to further analysis, such
as singular value decomposition \cite{GrasedyckSVD}, which gives
way to mathematical interpretations of the decoder $\boldsymbol{U}$.
The low dimensional map $\boldsymbol{S}$ is also amenable to normal
form transformations, which can be used to extract information such
as instantaneous frequencies and damping ratios.

We have tested the related concept of Koopman eigenfuntions, which
differs from an invariant foliation in that map $\boldsymbol{S}$
is assumed to be linear. If there are no internal resonances, Koopman
eigenfunctions are theoretically equivalent to invariant foliations.
However in numerical settings unresolved near internal resonances
become important and therefore Koopman eigenfunctions become inadequate.
We have also tried to fit autoencoders to our data \cite{Cenedese2022NatComm},
but apart from the artificial case where the invariant manifold was
pre-computed, it performed even worse than Koopman eigenfunctions.

Fitting an invariant foliation to data is extremely robust when state
space data is available. However, when the state space needed to be
reproduced from a scalar signal, the results were not as accurate
as we hoped for. While Taken's theorem allows for any generic delay
coordinates, in practice a non-optimal choice can lead to poor results.
We were expecting that the embedding dimension at least for low amplitude
signals would be the same as the attractor dimension. This is however
not true if the data also includes higher amplitude points. Despite
not being theoretically optimal, we have found that perfect reproducing
filter banks produce accurate results for low amplitude signals and
at the same time provide a state-space reconstruction with the same
dimensionality as that of the attractor. Future work should include
exploring various state-space reconstruction techniques in combination
with fitting an invariant foliation to data.

We did not fully explore the idea of locally defined invariant foliations
in remark \ref{rem:extrasimpleROM}, which can lead to computationally
efficient methods. Further research can also be directed towards cases,
where the data is on a high-dimensional submanifold of an even higher
dimensional vector space $X$. This is where an autoencoder and invariant
foliation may be combined.

\appendix

\section{Implementation details}

In this appendix we deal with how to represent invariant foliations,
locally defined invariant foliations and autoencoders. We also describe
our specific techniques to carry out optimisation on these representations.

\subsection{Dense polynomial representation}

The nonlinear map $\boldsymbol{S}$, the polynomial part of the locally
defined invariant foliation $\boldsymbol{W}_{0}$ and the decoder
part an autoencoder are represented by dense polynomials, that we
define here.

A polynomial $\boldsymbol{P}:Z\to X$ of order $d$ is represented
as a linear combination of monomials formed from the coordinates of
vectors of $Z$. First we define what we mean by a monomial and how
we order them. Given a basis in $Z$ with $\nu=\dim Z$, we can represent
each vector $\boldsymbol{z}\in Z$ by $\boldsymbol{z}=\left(z_{1},z_{2},\ldots,z_{\nu}\right)$.
Then we define non-negative integer vectors $\boldsymbol{m}=\left(m_{1},\ldots,m_{\nu}\right)\in\mathbb{N}^{\nu}$
and finally define a monomial of $\boldsymbol{z}$ by $\boldsymbol{z}^{\boldsymbol{m}}=z_{1}^{m_{1}}z_{2}^{m_{2}}\cdots z_{\nu}^{m_{\nu}}$.
We also need an ordered set of integer exponents, which is denoted
by
\[
\mathfrak{M}_{c,d}=\left\{ \boldsymbol{m}\in\mathbb{N}^{\nu}:c\le m_{1}+\cdots+m_{\nu}\le d\right\} .
\]
The ordering of $\mathfrak{M}_{c,d}$ is such that $\boldsymbol{m}<\boldsymbol{n}$
if there exists $k\le\nu$ and $m_{j}<n_{j}$ for all $j=1,\ldots,k$.
The cardinality of set $\mathfrak{M}_{c,d}$ is denoted by $\#\left(\nu,c,d\right)$.
Therefore we can also write that $\mathfrak{M}_{c,d}=\left\{ \boldsymbol{m}_{1},\boldsymbol{m}_{2},\ldots,\boldsymbol{m}_{\#\left(\nu,c,d\right)}\right\} $.
Using the ordered notation of monomials, a polynomial $\boldsymbol{P}$
containing terms at least order $c$ up to order-$d$ is represented
by a matrix $\underline{\boldsymbol{P}}\in\mathbb{R}^{n\times\#\left(\nu,c,d\right)}$,
such that 
\[
\boldsymbol{P}\left(\boldsymbol{z}\right)=\underline{\boldsymbol{P}}\left(\boldsymbol{z}^{\boldsymbol{m}_{1}},\boldsymbol{z}^{\boldsymbol{m}_{2}},\cdots,\boldsymbol{z}^{\boldsymbol{m}_{\#\left(\nu,c,d\right)}}\right)^{T}.
\]
We also call such polynomials order-$\left(c,d\right)$ polynomials.
For optimisation purposes order-$\left(c,d\right)$ polynomials form
an Euclidean manifold and therefore no constraint is placed on matrix
$\underline{\boldsymbol{P}}$.

\subsection{\label{subsec:AENC-repr}Autoencoder representation}

A polynomial autoencoder of order $d$ (as defined in \cite{Cenedese2022NatComm})
is represented by an orthogonal matrix $\underline{\boldsymbol{U}}\in\mathbb{R}^{n\times\nu}$
($\underline{\boldsymbol{U}}^{T}\underline{\boldsymbol{U}}=\boldsymbol{I}$)
and an order-$\left(2,d\right)$ polynomial $\boldsymbol{W}$, represented
by matrix $\underline{\boldsymbol{W}}\in\mathbb{R}^{n\times\#\left(\nu,2,d\right)}$.
The associated encoder is given by $\boldsymbol{U}\left(\boldsymbol{x}\right)=\underline{\boldsymbol{U}}^{T}\boldsymbol{x}$
and the decoder is given by $\boldsymbol{W}\left(\boldsymbol{z}\right)=\boldsymbol{U}\boldsymbol{z}+\boldsymbol{W}\left(\boldsymbol{z}\right)$,
which must satisfy the additional constraint $\boldsymbol{U}\left(\boldsymbol{W}\left(\boldsymbol{z}\right)\right)=\boldsymbol{z}$,
that in terms of our matrices means $\underline{\boldsymbol{U}}^{T}\underline{\boldsymbol{W}}=\boldsymbol{0}$.
In summary, we have the constraints
\begin{equation}
\begin{array}{rl}
\underline{\boldsymbol{U}}^{T}\underline{\boldsymbol{U}} & =\boldsymbol{I},\\
\underline{\boldsymbol{U}}^{T}\underline{\boldsymbol{W}} & =\boldsymbol{0},
\end{array}\label{eq:AENC-constr}
\end{equation}
which turns the admissible set of matrices $\underline{\boldsymbol{U}},\,\underline{\boldsymbol{W}}$
into a matrix manifold. We call this manifold the orthogonal autoencoder
manifold and denote it by $\mathit{OAE}_{n,\nu,\#\left(\nu,2,d\right)}$.
In paper \cite{Cenedese2022NatComm}, the authors also consider the
case when the decoder is decoupled from the encoder. In this case
$\boldsymbol{W}\left(\boldsymbol{z}\right)$ does not include the
orthogonal matrix $\boldsymbol{U}$ and therefore $\boldsymbol{W}\left(\boldsymbol{z}\right)$
is represented by matrix $\underline{\boldsymbol{W}}\in\mathbb{R}^{n\times\#\left(\nu,1,d\right)}$,
which also includes linear terms. The constraint on matrices $\underline{\boldsymbol{U}},\,\underline{\boldsymbol{W}}$
is therefore different
\begin{equation}
\begin{array}{rl}
\underline{\boldsymbol{U}}^{T}\underline{\boldsymbol{U}} & =\boldsymbol{I},\\
\underline{\boldsymbol{U}}^{T}\underline{\boldsymbol{W}} & =\boldsymbol{M},
\end{array}\label{eq:AENC-constr-1}
\end{equation}
where matrix $\boldsymbol{M}$ represents the identity polynomial
with respect to the monomials $\mathfrak{M}_{1,d}$. We call the resulting
matrix manifold the generalised orthogonal autoencoder manifold and
denote it by $\mathit{GOAE}_{\boldsymbol{M},n,\nu,\#\left(\nu,1,d\right)}$.
Unfortunately, this generalised autoencoder leads to an ill-defined
optimisation problem. Indeed, if all the data is on an invariant manifold
(the only sensible case), then the directions encoded by $\boldsymbol{U}$
are not defined by the data, because any transversal direction is
equally suitable for projecting the data. In our numerical test $\boldsymbol{U}$
quickly degenerated and led to spurious results. Nevertheless, due
to the similar formulation we treat both cases at the same time, because
$\mathit{OAE}_{n,m,l}$ is a particular case of $\mathit{GOAE}_{\boldsymbol{M},n,m,l}$
with $\boldsymbol{M}$ being equal to zero. The details of how projections
and retractions are calculated for $\mathit{GOAE}_{\boldsymbol{M},n,m,l}$
are presented in section \ref{subsec:AENC-manif}.

\subsection{\label{subsec:sparse-poly}Compressed polynomial representation with
hierarchical tensor coefficients}

To represent encoders of invariant foliations we need to approximate
high-dimensional functions. Dense polynomials are not suitable to
represent high-dimensional functions, because the number of required
parameters increase combinatorially. One approach is to use multi-layer
neural networks \cite{HORNIK1991,ElbrachterBolcskei2021}, however
their training can be problematic \cite{orr2003neural}. Standard
training methods take a long time and they rarely reach the global
minimum (and there is no free lunch \cite{FreeLunch1997}). In addition,
the approximation can overfit the data, that is the accuracy on unseen
data can be significantly worse than on the training data. In terms
of the geometry of neural networks, they do not form a differentiable
manifold and therefore parameters may tend to infinity without minimising
the loss function \cite{PetersenTopologyNeuralNetworks2021}.

To represent encoders we still use polynomials, except that the polynomial
coefficients are represented by compressed tensors in the hierarchical
Tucker (HT) format. This tensor format was introduced in \cite{HackbuschKuhn2009}
and demonstrated to solve many problems \cite{TensorApproxSurvey}
that would normally wrestle with the 'curse of dimensionality' \cite{bellman2015adaptive}.
In chemisty and physics a somewhat different but related format, the
matrix product state \cite{MatrixProductStates2007} is frequently
used and in other problems a particular type of HT representation
the tensor train format \cite{Oseledets2011} is used. However, the
most important property of HT tensors is that they form a smooth quotient
manifold and therefore suitable to use in optimisation problems \cite{USCHMAJEW2013133}. 

To define the format, we partly follow the notation of \cite{USCHMAJEW2013133}
in our definitions. First, we need to define the dimension tree.
\begin{defn}
Given an order $d$, a dimension tree $T$ is a non-trivial rooted
binary tree whose nodes can be labelled by subsets of $\left\{ 1,2,\ldots,d\right\} $
such that
\begin{enumerate}
\item the root has the label $t_{r}=\left\{ 1,2,\ldots,d\right\} $, and
\item every node $t\in T$, which is not a leaf has two descendants, $t_{1}$
and $t_{2}$ that form an ordered partition of $t$, that is,
\[
t_{1}\cup t_{2}=t\quad\text{and}\quad\mu<\nu\;\text{for all}\;\mu\in t_{1},\nu\in t_{2}.
\]
\end{enumerate}
The set of leaf nodes is denoted by $L=\left\{ \left\{ 1\right\} ,\left\{ 2\right\} ,\ldots,\left\{ d\right\} \right\} $.
\end{defn}
\begin{defn}
Let $T$ be a dimension tree and $\boldsymbol{k}=\left(k_{t}\right)_{t\in T}$
a set of positive integers. The hierarchical Tucker format for $\mathbb{R}^{n_{0}\times n_{1}\times\cdots\times n_{d}}$
is defined as follows
\begin{enumerate}
\item For a leaf node $U_{t}$ is an orthonormal matrix, such that $U_{t}\in\mathbb{R}^{k_{t}\times n_{t}}$
with elements $U_{t}\left(i;j\right)$, where $i=1\ldots k_{t}$,
$j=1\ldots n_{t}$
\item For not a leaf node with $t=t_{1}\cup t_{2}$ we define
\begin{equation}
U_{t}\left(p;i_{1},\ldots,i_{\left|t_{1}\right|},j_{1},\ldots,j_{\left|t_{2}\right|}\right)=\sum_{q=1}^{k_{t_{1}}}\sum_{r=1}^{k_{t_{2}}}B_{t}\left(p,q,r\right)U_{t_{1}}\left(q;i_{1},\ldots,i_{\left|t_{1}\right|}\right)U_{t_{2}}\left(r;j_{1},\ldots,j_{\left|t_{2}\right|}\right),\label{eq:HT-notation}
\end{equation}
where $\left|t\right|$ stands for the number of indices in label
$t$.
\end{enumerate}
\end{defn}
The definition is recursive, hence data storage for $U_{t}$ is only
required for the leaf nodes, and for non-leaf nodes only storing $B_{t}$
is sufficient. In other words, the set of matrices
\[
\left\{ U_{t}:t\in L\right\} \cup\left\{ B_{t}:t\in T\setminus L\right\} 
\]
fully specifies a HT tensor. For each node, apart from the root node
$t_{r}$, one can apply a transformation, which does not change the
resulting tensor. That is, for $t\neq t_{r}$ define $\tilde{U}_{t}=A_{t}U_{t}$
and $\tilde{B}_{t}=B_{t}\times_{2,3}A_{t_{1}}^{-1}\otimes A_{t_{2}}^{-1}$,
where $A_{t}\in\mathbb{R}^{k_{t}\times k_{t}}$ are invertible matrices.
Without restricting generality, we can stipulate that the leaf nodes
$U_{t}$ and matricisation of $B_{t}$ with respect to the first index
are also orthogonal matrices, which means that transformations $A_{t}$
must be unitary to preserve orthogonality. This identity transformation
defines an equivalence relation between parametrisations of a HT tensor
and therefore the HT format is a quotient manifold as described in
\cite[chapter 7]{boumal2022intromanifolds}. Using a quotient manifold
would prevent us from optimising for the matrices $U_{t}$, $B_{t}$
individually, as we need to treat the whole HT tensor as a single
entity. In addition, we could not find any example in the literature,
where a HT tensor was treated as a quotient manifold. Instead, we
simply assume a simpler structure, that is, $B_{t_{r}}$ is on a Euclidean
manifold and the rest of the coefficients $B_{t}$ and $U_{t}$ are
orthogonal matrices and therefore elements of Stiefel manifolds, which
is the running example in \cite{boumal2022intromanifolds}. This way
we have the HT format as an element of a product manifold, where each
coefficient matrix remains independent of each other. It is also possible
to calculate singular values of HT tensors. Vanishing singular values
indicate that the required quantity is well-approximated and that
the HT tensor can be simplified to include less terms. This further
adds to our ability to identify parsimonious models. In what follows,
we use balanced dimension trees and up to rank six matrices $U_{t}$,
$B_{t}$ for simplicity.

Using notation (\ref{eq:HT-notation}), we can write the encoders
as
\begin{equation}
\boldsymbol{U}\left(\boldsymbol{x}\right)=\boldsymbol{U}^{1}\boldsymbol{x}+\sum_{d=2}^{p}\sum_{i_{1}\cdots i_{d}=1}^{n}U_{t_{r}}^{d}\left(\cdot;i_{1},\ldots,i_{d}\right)x_{i_{1}}\cdots x_{i_{d}},\label{eq:HT-encoder-repr}
\end{equation}
where $\boldsymbol{U}^{1}$ is an orthogonal matrix, similarly element
of a Stiefel manifold. We also apply the constraint that $\boldsymbol{U}\left(\boldsymbol{W}_{1}\boldsymbol{z}\right)$
must be linear. This simply means that for the nonlinear terms of
$\boldsymbol{U}$ the data must not include components in the $\boldsymbol{W}_{1}$
direction. This can be achieved by projecting our data, that is $\hat{\boldsymbol{x}}=\boldsymbol{U}_{1}^{\perp}\boldsymbol{x}$,
where $\boldsymbol{U}_{1}^{\perp}$ is obtained in step F2 of the
invariant foliation identification process of section \ref{sec:ROM_id_processes}
and consists of row vectors that are orthogonal to the column vectors
of $\boldsymbol{W}_{1}$. Therefore our constrained encoder assumes
the form of
\[
\boldsymbol{U}\left(\boldsymbol{x}\right)=\boldsymbol{U}^{1}\boldsymbol{x}+\sum_{d=2}^{p}\sum_{i_{1}\cdots i_{d}=1}^{n}U_{t_{r}}^{d}\left(\cdot;i_{1},\ldots,i_{d}\right)\hat{x}_{i_{1}}\cdots\hat{x}_{i_{d}}.
\]

\subsection{Optimisation techniques}

Given our compressed representation of the encoder in the form (\ref{eq:HT-encoder-repr}),
we can now discuss how to solve the optimisation problem (\ref{eq:MAP-U-optim}). 

HT tensors, which make up the parametrisation of the encoder $\boldsymbol{U}$,
depend linearly on each matrix $B_{t}^{d}$, $U_{t}^{d}$ individually.
Therefore, it is beneficial to carry out the optimisation for each
matrix component individually, and cycle through all matrix components
a number of times until convergence is reached. This is called batch
coordinate descent \cite{ortega1970iterative}. The complicating factor
in this approach is that the encoder also appears as an inner function
of map $\boldsymbol{S}$ and therefore the dependence of the objective
function becomes nonlinear on matrices $B_{t}$, $U_{t}$. For optimisation
in each coordinate we use the second order trust-region method \cite{conn2000trust}
as we can explicitly calculate the Hessian of the objective function
with respect to each matrix $B_{t}^{d},U_{t}^{d}$ and the parameters
of $\boldsymbol{S}$. We only take a limited number of steps with
the trust region method, as we cycle through all parameter matrices.
The algorithm cycles through each tensor and the parameters of $\boldsymbol{S}$
in a given order, but for each tensor of order $d$, only the coefficient
matrix for which the gradient of the objective function is the largest
is optimised for. This is a variant of the Gauss-Southwell algorithm
\cite{GaussSouthwell2015}. We found that this technique, as it eliminates
unnecessary optimisation steps for parameters that do not influence
the objective function much, converges relatively fast. Attempts to
use off-the-shelf optimisers were fruitless, due to computational
costs.

To identify locally defined foliations we use the Riemannian trust
region method and for identifying autoencoders, we use the Riemannian
BFGS quasi-Newton technique from software package \cite{Manopt2022}.

\subsection{Projections and retractions on matrix manifolds}

As we perform optimisation on matrix manifolds, we also require an
orthogonal projection from the ambient space to the tangent space
to the manifold and a retraction \cite{boumal2022intromanifolds,absil2009optimization}.
The advantage of optimising on a manifold instead of using standard
constrained optimisation is that retractions and projections are generally
easier to evaluate than solving the constraints using generic methods.

Let us assume that our matrix manifold $\mathcal{M}$ is embedded
into the Euclidean space $\mathcal{E}$. As computers deal with lists
of numbers, we only have a cost function defined on $\mathcal{E}$,
which we denote by $\overline{F}:\mathcal{E}\to\mathbb{R}$, instead
on the manifold $F:\mathcal{M}\to\mathbb{R}$. Carrying out optimisation
on $\mathcal{M}$ therefore requires various corrections when we use
cost function $\overline{F}$. The gradient $\nabla F$ on $\mathcal{M}$
is a map from $\mathcal{M}$ to the tangent bundle $T\mathcal{M}$.
Generally $\overline{F}$ does not map into $T\mathcal{M}$, therefore
a projection is necessary. In particular, for a specific point $\boldsymbol{p}\in\mathcal{M}$,
the projection is the linear map $\boldsymbol{P}_{\boldsymbol{p}}:T_{\boldsymbol{p}}\mathcal{E}\to T_{\boldsymbol{p}}\mathcal{M}$
and the gradient is calculated as 
\[
\nabla F\left(\boldsymbol{p}\right)=\boldsymbol{P}_{\boldsymbol{p}}\nabla\overline{F}\left(\boldsymbol{p}\right).
\]
A simple gradient descent method would move in the direction of the
gradient, however that is not necessarily on manifold $\mathcal{M}$,
so we need to bring the result back to $\mathcal{M}$ using a retraction.
A retraction is a map $R_{\boldsymbol{p}}\left(\boldsymbol{X}\right):T\mathcal{M}\to\mathcal{M}$
for which $DR_{\boldsymbol{p}}\left(\boldsymbol{0}\right)\boldsymbol{Y}=\boldsymbol{Y}$.
A retraction is supposed to approximate the so-called exponential
map on the tangent space, which produces the geodesics along the manifold
in the direction of the tangent vector $\boldsymbol{X}$ up to the
length of $\boldsymbol{X}$. A second order approximation of the exponential
retraction is the solution of the minimisation problem
\begin{equation}
R_{\boldsymbol{p}}\left(\boldsymbol{X}\right)=\arg\min_{\boldsymbol{q}\in\mathcal{M}}\left\Vert \boldsymbol{p}+\boldsymbol{X}-\boldsymbol{q}\right\Vert ^{2}.\label{eq:proj-retract}
\end{equation}
Projection like retractions are explained in detail in \cite{Absil2012Retraction}.
Using a retraction we can pull back our result onto the manifold.
The Hessian of $F$ can be defined in terms of a second order retraction
$R$. If we assume that $\tilde{F}\left(\boldsymbol{p},\boldsymbol{X}\right)=\overline{F}\left(R_{\boldsymbol{p}}\left(\boldsymbol{P}_{\boldsymbol{p}}\boldsymbol{X}\right)\right)$,
then the Hessian is 
\[
D^{2}F\left(\boldsymbol{p}\right)=D_{2}^{2}\tilde{F}\left(\boldsymbol{p},\boldsymbol{0}\right).
\]
The expression of the Hessian can be simplified in many ways, which
is described, e.g., in \cite{boumal2022intromanifolds}.

In what follows we detail two matrix manifolds that do not appear
in the literature and are specific to our problems. We also use the
Stiefel manifold of orthogonal matrices that is covered in many publications
\cite{boumal2022intromanifolds}. In particular, we use the so-called
polar retraction of the Stiefel manifold, which is equivalent to equation
(\ref{eq:proj-retract}). For our specific matrix manifolds we solve
(\ref{eq:proj-retract}).

\subsubsection{\label{subsec:AENC-manif}Autoencoder manifold}

The two matrices $\boldsymbol{p}_{1}=\underline{\boldsymbol{U}}\in\mathbb{R}^{n\times m}$
, $\boldsymbol{p}_{2}=\underline{\boldsymbol{W}}\in\mathbb{R}^{n\times l}$
that satisfy the constraints (\ref{eq:AENC-constr}) represent an
orthogonal autoencoder form the matrix manifold $\mathcal{M}=\mathit{GOAE}_{\boldsymbol{M},n,m,l}$,
which is defined by the zero-level set of submersion

\[
\boldsymbol{h}\left(\boldsymbol{p}_{1},\boldsymbol{p}_{2}\right)=\begin{pmatrix}\boldsymbol{p}_{1}^{T}\boldsymbol{p}_{1}-\boldsymbol{I}\\
\boldsymbol{p}_{1}^{T}\boldsymbol{p}_{2}-\boldsymbol{M}
\end{pmatrix}.
\]
For $\mathcal{M}$ to be a manifold the derivative $D\boldsymbol{h}$
must have full rank. Indeed, we calculate that 
\[
D\boldsymbol{h}\left(\boldsymbol{p}_{1},\boldsymbol{p}_{2}\right)\left(\boldsymbol{X}_{1},\boldsymbol{X}_{2}\right)=\begin{pmatrix}\boldsymbol{X}_{1}^{T}\boldsymbol{p}_{1}+\boldsymbol{p}_{1}^{T}\boldsymbol{X}_{1}\\
\boldsymbol{X}_{1}^{T}\boldsymbol{p}_{2}+\boldsymbol{p}_{1}^{T}\boldsymbol{X}_{2}
\end{pmatrix},
\]
and substitute $\boldsymbol{X}_{1}=\boldsymbol{p}_{1}\boldsymbol{A}$,
$\boldsymbol{X}_{2}=\boldsymbol{p}_{1}\boldsymbol{B}$, which yields
\[
D\boldsymbol{h}\left(\boldsymbol{p}_{1},\boldsymbol{p}_{2}\right)\left(\boldsymbol{X}_{1},\boldsymbol{X}_{2}\right)=\begin{pmatrix}\boldsymbol{A}^{T}+\boldsymbol{A}\\
\boldsymbol{B}
\end{pmatrix}.
\]
Since $\boldsymbol{A}$ and $\boldsymbol{B}$ are arbitrary matrices
the range of $D\boldsymbol{h}$ is the full set of symmetric and general
matrices, that is, $D\boldsymbol{h}$ has full rank, hence $\mathcal{M}$
is an embedded manifold.

\paragraph{Tangent space.}

The tangent space $T_{\left(\boldsymbol{p}_{1},\boldsymbol{p}_{2}\right)}\mathcal{M}$
is defined as the null space of $D\boldsymbol{h}\left(\boldsymbol{p}\right)$,
that is
\[
T_{\left(\boldsymbol{p}_{1},\boldsymbol{p}_{2}\right)}\mathcal{M}=\left\{ \boldsymbol{X}\in\mathbb{R}^{n\times m}:D\boldsymbol{h}\left(\boldsymbol{p}_{1},\boldsymbol{p}_{2}\right)\left(\boldsymbol{X}_{1},\boldsymbol{X}_{2}\right)=\left(\boldsymbol{0},\boldsymbol{0}\right)\right\} .
\]
We define $\boldsymbol{p}^{\perp}\in\mathbb{R}^{n\times\left(n-m\right)}$
by $\boldsymbol{p}^{\perp T}\boldsymbol{p}^{\perp}=\boldsymbol{I}$
and $\boldsymbol{p}^{T}\boldsymbol{p}^{\perp}=\boldsymbol{0}$, hence
the direct sum of ranges of $\boldsymbol{p}$ and $\boldsymbol{p}^{\perp}$
span $\mathbb{R}^{n}$. To characterise the tangent space we decompose
$\boldsymbol{X}_{1}=\boldsymbol{p}_{1}\boldsymbol{A}_{1}+\boldsymbol{p}_{1}^{\perp}\boldsymbol{B}_{1}$,
$\boldsymbol{X}_{2}=\boldsymbol{p}_{1}\boldsymbol{A}_{2}+\boldsymbol{p}_{1}^{\perp}\boldsymbol{B}_{2}$,
and find that
\begin{equation}
D\boldsymbol{h}\left(\boldsymbol{p}_{1},\boldsymbol{p}_{2}\right)\left(\boldsymbol{X}_{1},\boldsymbol{X}_{2}\right)=\begin{pmatrix}\boldsymbol{A}_{1}^{T}+\boldsymbol{A}_{1}\\
\boldsymbol{B}_{1}^{T}\boldsymbol{p}_{1}^{\perp T}\boldsymbol{p}_{2}+\boldsymbol{A}_{2}
\end{pmatrix}.\label{eq:OAE-tan}
\end{equation}
For (\ref{eq:OAE-orth-1}) to equal zero, $\boldsymbol{A}_{1}$ must
be antisymmetric, $\boldsymbol{B}_{1}$, $\boldsymbol{B}_{2}$ can
be arbitrary and $\boldsymbol{A}_{2}=-\boldsymbol{B}_{1}^{T}\boldsymbol{p}_{1}^{\perp T}\boldsymbol{p}_{2}$.

\paragraph{Normal space.}

We use $\boldsymbol{X}_{1}=\boldsymbol{p}_{1}\boldsymbol{A}_{1}+\boldsymbol{p}_{1}^{\perp}\boldsymbol{B}_{1}$,
$\boldsymbol{X}_{2}=-\boldsymbol{p}_{1}\boldsymbol{B}_{1}^{T}\boldsymbol{p}_{1}^{\perp T}\boldsymbol{p}_{2}+\boldsymbol{p}_{1}^{\perp}\boldsymbol{B}_{2}$
to represent an element of the tangent space and $\boldsymbol{Y}_{1}=\boldsymbol{p}_{1}\boldsymbol{K}_{1}+\boldsymbol{p}_{1}^{\perp}\boldsymbol{L}_{1}$,
$\boldsymbol{Y}_{2}=\boldsymbol{p}_{1}\boldsymbol{K}_{2}+\boldsymbol{p}_{1}^{\perp}\boldsymbol{L}_{2}$
to represent an arbitrary element of the ambient space. For $\left(\boldsymbol{Y}_{1},\boldsymbol{Y}_{2}\right)$
to be a normal vector in $N_{\left(\boldsymbol{p}_{1},\boldsymbol{p}_{2}\right)}\mathcal{M}$
equation
\begin{align}
\left\langle \boldsymbol{X}_{1},\boldsymbol{Y}_{1}\right\rangle +\left\langle \boldsymbol{X}_{2},\boldsymbol{Y}_{2}\right\rangle  & =0\label{eq:OAE-orth-1}
\end{align}
must hold for all $\boldsymbol{B}_{1}$, $\boldsymbol{B}_{2}$ and
$\boldsymbol{A}_{1}$ antisymmetric matrices. Therefore we expand
(\ref{eq:OAE-orth-1}) into
\begin{equation}
\left\langle \boldsymbol{A}_{1},\boldsymbol{K}_{1}\right\rangle +\left\langle \boldsymbol{B}_{1},\boldsymbol{L}_{1}\right\rangle -\left\langle \boldsymbol{B}_{1}^{T}\boldsymbol{p}_{1}^{\perp T}\boldsymbol{p}_{2},\boldsymbol{K}_{2}\right\rangle +\left\langle \boldsymbol{B}_{2},\boldsymbol{L}_{2}\right\rangle =0.\label{eq:OAE-orth-3p2}
\end{equation}
To explore what parameters are allowed, we consider (\ref{eq:OAE-orth-3p2})
term-by-term. If we set $\boldsymbol{B}_{1}=\boldsymbol{0}$, $\boldsymbol{B}_{2}=\boldsymbol{0}$,
then what remains is $\left\langle \boldsymbol{A}_{1},\boldsymbol{K}_{1}\right\rangle =0$
for all $\boldsymbol{A}_{1}$ anti-symmetric. This constraint holds
if and only if $\boldsymbol{K}_{1}$ is symmetric. Now we set $\boldsymbol{A}_{1}=\boldsymbol{0},\boldsymbol{B}_{1}=\boldsymbol{0}$,
which leads to $\left\langle \boldsymbol{B}_{2},\boldsymbol{L}_{2}\right\rangle =0$
for all $\boldsymbol{B}_{2}$, hence we must have $\boldsymbol{L}_{2}=\boldsymbol{0}$.
Finally we set $\boldsymbol{A}_{1}=\boldsymbol{0},\boldsymbol{B}_{2}=\boldsymbol{0}$,
which gives us 
\begin{align*}
\left\langle \boldsymbol{B}_{1},\boldsymbol{L}_{1}\right\rangle -\left\langle \boldsymbol{B}_{1}^{T}\boldsymbol{p}_{1}^{\perp T}\boldsymbol{p}_{2},\boldsymbol{K}_{2}\right\rangle  & =0,\\
\left\langle \boldsymbol{B}_{1},\boldsymbol{L}_{1}\right\rangle -\left\langle \boldsymbol{p}_{2},\boldsymbol{p}_{1}^{\perp}\boldsymbol{B}_{1}\boldsymbol{K}_{2}\right\rangle  & =0
\end{align*}
or in index notation
\begin{equation}
B_{1ij}L_{1ij}-p_{2ji}p_{1jk}^{\perp}B_{1kl}K_{2li}=0.\label{eq:OAE-orth-2}
\end{equation}
Now we differentiate (\ref{eq:OAE-orth-2}) with respect to $\boldsymbol{B}_{1}$,
which gives
\[
L_{1rs}-p_{2ji}p_{1jr}^{\perp}K_{2si}=0\;\implies\;\boldsymbol{L}_{1}=\boldsymbol{p}_{1}^{\perp T}\boldsymbol{p}_{2}\boldsymbol{K}_{2}^{T}.
\]
In conclusion the normal space is given by matrices of the form
\begin{equation}
\begin{array}{rl}
\boldsymbol{Y}_{1} & =\boldsymbol{p}_{1}\boldsymbol{K}_{1}+\boldsymbol{p}_{1}^{\perp}\boldsymbol{p}_{1}^{\perp T}\boldsymbol{p}_{2}\boldsymbol{K}_{2}^{T},\\
\boldsymbol{Y}_{2} & =\boldsymbol{p}_{1}\boldsymbol{K}_{2},
\end{array}\label{eq:OAE-orth-repr}
\end{equation}
where $\boldsymbol{K}_{1}$ symmetric and $\boldsymbol{K}_{2}$ is
a general matrix.

\paragraph{Projection to tangent space.}

A projection is an operation that removes a vector from the normal
space from any input such that the result becomes a tangent vector.
Therefore we need to solve equation
\[
D\boldsymbol{h}\left(\boldsymbol{p}_{1},\boldsymbol{p}_{2}\right)\begin{pmatrix}\boldsymbol{X}_{1}-\boldsymbol{Y}_{1}\\
\boldsymbol{X}_{2}-\boldsymbol{Y}_{2}
\end{pmatrix}=\begin{pmatrix}\boldsymbol{0}\\
\boldsymbol{0}
\end{pmatrix},
\]
where $\left(\boldsymbol{Y}_{1},\boldsymbol{Y}_{2}\right)$ is in
the normal space $N_{\left(\boldsymbol{p}_{1},\boldsymbol{p}_{2}\right)}\mathcal{M}$.
Using the representation (\ref{eq:OAE-orth-repr}) of the normal space,
we find that the equation to solve is

\[
D\boldsymbol{h}\left(\boldsymbol{p}_{1},\boldsymbol{p}_{2}\right)\begin{pmatrix}\boldsymbol{X}_{1}-\boldsymbol{p}_{1}\boldsymbol{K}_{1}-\boldsymbol{p}_{1}^{\perp}\boldsymbol{p}_{1}^{\perp T}\boldsymbol{p}_{2}\boldsymbol{K}_{2}^{T}\\
\boldsymbol{X}_{2}-\boldsymbol{p}_{1}\boldsymbol{K}_{2}
\end{pmatrix}=\begin{pmatrix}\boldsymbol{0}\\
\boldsymbol{0}
\end{pmatrix}.
\]
It remains to evaluate $D\boldsymbol{h}$, which yields
\begin{multline}
D\boldsymbol{h}\left(\boldsymbol{p}_{1},\boldsymbol{p}_{2}\right)\begin{pmatrix}\boldsymbol{X}_{1}-\boldsymbol{p}_{1}\boldsymbol{K}_{1}-\boldsymbol{p}_{1}^{\perp}\boldsymbol{p}_{1}^{\perp T}\boldsymbol{p}_{2}\boldsymbol{K}_{2}^{T}\\
\boldsymbol{X}_{2}-\boldsymbol{p}_{1}\boldsymbol{K}_{2}
\end{pmatrix}\\
=\begin{pmatrix}\boldsymbol{X}_{1}^{T}\boldsymbol{p}_{1}+\boldsymbol{p}_{1}^{T}\boldsymbol{X}_{1}-\boldsymbol{K}_{1}^{T}-\boldsymbol{K}_{1}\\
\boldsymbol{X}_{1}^{T}\boldsymbol{p}_{2}+\boldsymbol{p}_{1}^{T}\boldsymbol{X}_{2}-\boldsymbol{K}_{2}\left(\boldsymbol{p}_{2}^{T}\boldsymbol{p}_{1}^{\perp}\boldsymbol{p}_{1}^{\perp T}\boldsymbol{p}_{2}+\boldsymbol{I}\right)
\end{pmatrix}=\begin{pmatrix}\boldsymbol{0}\\
\boldsymbol{0}
\end{pmatrix}.\label{eq:OAE-proj-1}
\end{multline}
The solution of equation (\ref{eq:OAE-proj-1}) is 
\begin{align*}
\boldsymbol{K}_{1} & =\frac{1}{2}\left(\boldsymbol{X}_{1}^{T}\boldsymbol{p}_{1}+\boldsymbol{p}_{1}^{T}\boldsymbol{X}_{1}\right)\\
\boldsymbol{K}_{2} & =\left(\boldsymbol{X}_{1}\boldsymbol{p}_{2}+\boldsymbol{p}_{1}^{T}\boldsymbol{X}_{2}\right)\left(\boldsymbol{I}+\boldsymbol{p}_{2}^{T}\boldsymbol{p}_{1}^{\perp}\boldsymbol{p}_{1}^{\perp T}\boldsymbol{p}_{2}\right)^{-1}
\end{align*}
and therefore the required projection is written as
\[
\boldsymbol{P}_{\left(\boldsymbol{p}_{1},\boldsymbol{p}_{2}\right)}\begin{pmatrix}\boldsymbol{X}_{1}\\
\boldsymbol{X}_{2}
\end{pmatrix}=\begin{pmatrix}\boldsymbol{X}_{1}-\frac{1}{2}\boldsymbol{p}_{1}\left(\boldsymbol{X}_{1}^{T}\boldsymbol{p}_{1}+\boldsymbol{p}_{1}^{T}\boldsymbol{X}_{1}\right)-\boldsymbol{p}_{1}^{\perp}\boldsymbol{p}_{1}^{\perp T}\boldsymbol{p}_{2}\left(\boldsymbol{I}+\boldsymbol{p}_{2}^{T}\boldsymbol{p}_{1}^{\perp}\boldsymbol{p}_{1}^{\perp T}\boldsymbol{p}_{2}\right)^{-1}\left(\boldsymbol{p}_{2}^{T}\boldsymbol{X}_{1}^{T}+\boldsymbol{X}_{2}^{T}\boldsymbol{p}_{1}\right)\\
\boldsymbol{X}_{2}-\boldsymbol{p}_{1}\left(\boldsymbol{X}_{1}\boldsymbol{p}_{2}+\boldsymbol{p}_{1}^{T}\boldsymbol{X}_{2}\right)\left(\boldsymbol{I}+\boldsymbol{p}_{2}^{T}\boldsymbol{p}_{1}^{\perp}\boldsymbol{p}_{1}^{\perp T}\boldsymbol{p}_{2}\right)^{-1}
\end{pmatrix}.
\]

\paragraph{Projective retraction.}

The retraction we calculate is the orthogonal projection onto the
manifold (\ref{eq:proj-retract}). This is a second order retraction
according to \cite{Absil2012Retraction}. In our case, the projection
is defined as
\[
\boldsymbol{P}_{\mathcal{M}}\left(\boldsymbol{q}_{1},\boldsymbol{q}_{2}\right)=\arg\min_{\left(\boldsymbol{p}_{1},\boldsymbol{p}_{2}\right)\in\mathcal{M}}\frac{1}{2}\left\Vert \boldsymbol{q}_{1}-\boldsymbol{p}_{1}\right\Vert ^{2}+\frac{1}{2}\left\Vert \boldsymbol{q}_{2}-\boldsymbol{p}_{2}\right\Vert ^{2},
\]
where we can assume that $m<l$ and $m<n$. Using constrained optimisation
we define the auxiliary objective function 
\[
g\left(\boldsymbol{p}_{1},\boldsymbol{p}_{1},\boldsymbol{\alpha},\boldsymbol{\beta}\right)=\frac{1}{2}\left\Vert \boldsymbol{q}_{1}-\boldsymbol{p}_{1}\right\Vert ^{2}+\left\langle \boldsymbol{\alpha},\boldsymbol{p}_{1}^{T}\boldsymbol{p}_{1}-\boldsymbol{I}\right\rangle +\frac{1}{2}\left\Vert \boldsymbol{q}_{2}-\boldsymbol{p}_{2}\right\Vert ^{2}+\left\langle \boldsymbol{\beta},\boldsymbol{p}_{1}^{T}\boldsymbol{p}_{2}-\boldsymbol{M}\right\rangle ,
\]
where $\boldsymbol{\alpha}\in\mathbb{R}^{m\times m}$, $\boldsymbol{\beta}\in\mathbb{R}^{m\times l}$
are Lagrange multipliers. We can also write the augmented cost function
in index notation, that is
\begin{align*}
g\left(\boldsymbol{p}_{1},\boldsymbol{p}_{1},\boldsymbol{\alpha},\boldsymbol{\beta}\right) & =\frac{1}{2}q_{1ij}q_{1ij}-q_{1ij}p_{1ij}+\frac{1}{2}p_{1ij}p_{1ij}+\alpha_{ji}p_{1kj}p_{1ki}-\alpha_{ii}\\
 & \quad+\frac{1}{2}q_{2ij}q_{2ij}-q_{2ij}p_{2ij}+\frac{1}{2}p_{2ij}p_{2ij}+\beta_{ji}p_{1kj}p_{2ki}-\beta_{ji}M_{ji}.
\end{align*}
To find the stationary point of $g$, we take the derivatives
\begin{align*}
D_{1}g\left(\boldsymbol{p}_{1},\boldsymbol{p}_{2},\boldsymbol{\alpha},\boldsymbol{\beta}\right) & =-q_{1rs}+p_{1rs}+\alpha_{js}p_{1rj}+\alpha_{si}p_{1ri}+\beta_{si}p_{2ri}\\
 & =-\boldsymbol{q}_{1}+\boldsymbol{p}_{1}+\boldsymbol{p}_{1}\boldsymbol{\alpha}+\boldsymbol{p}_{1}\boldsymbol{\alpha}^{T}+\boldsymbol{p}_{2}\boldsymbol{\beta}^{T},\\
D_{2}g\left(\boldsymbol{p}_{1},\boldsymbol{p}_{2},\boldsymbol{\alpha},\boldsymbol{\beta}\right) & =-q_{2rs}+p_{2rs}+\beta_{js}p_{1rj}=-\boldsymbol{q}_{2}+\boldsymbol{p}_{2}+\boldsymbol{p}_{1}\boldsymbol{\beta},
\end{align*}
which must vanish. Let us define $\boldsymbol{\sigma}=\boldsymbol{I}+\boldsymbol{\alpha}+\boldsymbol{\alpha}^{T}$,
hence the equations to solve become
\begin{align*}
\boldsymbol{p}_{1}\boldsymbol{\sigma}+\boldsymbol{p}_{2}\boldsymbol{\beta}^{T} & =\boldsymbol{q}_{1},\\
\boldsymbol{p}_{2}+\boldsymbol{p}_{1}\boldsymbol{\beta} & =\boldsymbol{q}_{2},\\
\boldsymbol{p}_{1}^{T}\boldsymbol{p}_{1} & =\boldsymbol{I},\\
\boldsymbol{p}_{1}^{T}\boldsymbol{p}_{2} & =\boldsymbol{M}.
\end{align*}
We can eliminate unknown variables $\boldsymbol{\sigma}$, $\boldsymbol{\beta}$
and $\boldsymbol{p}_{2}$ by solving the equations, that is
\begin{align*}
\boldsymbol{\beta} & =\boldsymbol{p}_{1}^{T}\boldsymbol{q}_{2}-\boldsymbol{M},\\
\boldsymbol{\sigma} & =\boldsymbol{p}_{1}^{T}\boldsymbol{q}_{1}-\boldsymbol{M}\boldsymbol{q}_{2}^{T}\boldsymbol{p}_{1}+\boldsymbol{M}\boldsymbol{M}^{T},\\
\boldsymbol{p}_{2} & =\boldsymbol{q}_{2}-\boldsymbol{p}_{1}\left(\boldsymbol{p}_{1}^{T}\boldsymbol{q}_{2}-\boldsymbol{M}\right).
\end{align*}
The equation for the remaining $\boldsymbol{p}_{1}$ then becomes
\begin{equation}
\left(\boldsymbol{p}_{1}\boldsymbol{p}_{1}^{T}-\boldsymbol{I}\right)\left(\boldsymbol{q}_{1}-\boldsymbol{q}_{2}\boldsymbol{q}_{2}^{T}\boldsymbol{p}_{1}+\boldsymbol{q}_{2}\boldsymbol{M}^{T}\right)=\boldsymbol{0}.\label{eq:OAE-retr-null}
\end{equation}
Equation (\ref{eq:OAE-retr-null}) means that $\boldsymbol{q}_{1}+\boldsymbol{q}_{2}\boldsymbol{M}^{T}-\boldsymbol{q}_{2}\boldsymbol{q}_{2}^{T}\boldsymbol{p}_{1}\in\mathrm{range}\,\boldsymbol{p}_{1}$,
therefore there exists matrix $\boldsymbol{a}$ such that 
\begin{equation}
\begin{array}{rl}
\boldsymbol{p}_{1}\boldsymbol{a} & =\boldsymbol{q}_{1}+\boldsymbol{q}_{2}\boldsymbol{M}^{T}-\boldsymbol{q}_{2}\boldsymbol{q}_{2}^{T}\boldsymbol{p}_{1}\\
\boldsymbol{p}_{1}^{T}\boldsymbol{p}_{1} & =\boldsymbol{I}
\end{array}.\label{eq:OAE-retr-eig}
\end{equation}
The solution of equation (\ref{eq:OAE-retr-eig}) is not unique, because
for any unitary transformation $\boldsymbol{T}$, $\boldsymbol{p}_{1}\mapsto\boldsymbol{p}_{1}\boldsymbol{T}$
and $\boldsymbol{a}\mapsto\boldsymbol{T}^{T}\boldsymbol{a}$ are also
a solution. We use Newton's method in combination with the Moore--Penrose
inverse of the Jacobian to find a solution of (\ref{eq:OAE-retr-eig}).
Finally, the retraction is set to the solution of (\ref{eq:OAE-retr-eig})
\[
\boldsymbol{R}_{\left(\tilde{\boldsymbol{p}}_{1},\tilde{\boldsymbol{p}}_{2}\right)}\begin{pmatrix}\boldsymbol{X}_{1}\\
\boldsymbol{X}_{2}
\end{pmatrix}=\left(\boldsymbol{p}_{1},\boldsymbol{p}_{2}\right),
\]
where $\boldsymbol{q}_{1}=\tilde{\boldsymbol{p}}_{1}+\boldsymbol{X}_{1}$,
$\boldsymbol{q}_{2}=\tilde{\boldsymbol{p}}_{2}+\boldsymbol{X}_{2}$.

\section{\label{sec:DelayEmbedding}State-space reconstruction}

When the full state of the system cannot be measured, but it is still
observable from the output, we need to employ state-space reconstruction.
We focus on the case of a real valued scalar signal $z_{j}\in\mathbb{R}$,
$j=1\ldots,M$, sampled with frequency $f_{s}$, where $j$ stands
for time. Taken's embedding theorem \cite{TakensEmbedding1981} states
that if we know the box counting dimension of our attractor, which
is denoted by $n$, the full state can almost always be observed if
we create a new vector \textbf{$\boldsymbol{x}_{p}=\left(z_{p},z_{p+\tau_{1}},\ldots,z_{p+\tau_{d-1}}\right)\in\mathbb{R}^{d}$},
where $d>2n$ and $\tau_{1},\ldots,\tau_{d-1}$ are integer delays.
The required number of delays is a conservative estimate, in many
cases $n\le d<2n+1$ is sufficient. The general problem, how to select
the number of delays $d$ and the delays $\tau_{1},\ldots,\tau_{d-1}$
optimally \cite{Pecora2007,Kramer1991autoencoder}, is a much researched
subject.

Instead of selecting delays $\tau_{1},\ldots,\tau_{d-1}$, we use
linear combinations of all possible delay embeddings, which allows
us to consider the optimality of the embedding in a linear parameter
space. We create vectors of length $L$,  
\[
\boldsymbol{u}_{p}=\left(z_{ps+1},\cdots,z_{ps+L}\right)\in\mathbb{R}^{L},\;p=0,1,\ldots,\left\lfloor \frac{M-L}{s}\right\rfloor ,
\]
where $s$ is the number of samples by which the window is shifted
forward in time with increasing $p$. Our reconstructed state variable
$\boldsymbol{x}_{p}\in\mathbb{R}^{d}\eqsim X$ is then calculated
by a linear map $\boldsymbol{T}:\mathbb{R}^{L}\to\mathbb{R}^{d}$,
in the form $\boldsymbol{x}_{p}=\boldsymbol{T}\boldsymbol{u}_{p}$
such that our scalar signal is returned by $z_{ps+\ell}\approx\boldsymbol{v}\cdot\boldsymbol{x}_{p}$
for some $\ell\in\left\{ 1,\ldots,L\right\} $. We assume that the
signal has $m$ dominant frequencies $f_{k}$, $k=1,\ldots,m$, with
$f_{1}$ being the lowest frequency. We define $w=\left\lfloor \frac{f_{s}}{f_{1}}\right\rfloor $
as an approximate period of the signal, where $f_{s}$ is the sampling
frequency and set $L=2w+1$. This makes each $\boldsymbol{u}_{p}$
capture roughly two periods of oscillations of the lowest frequency
$f_{1}$.

In what follows we use two approaches to construct the linear map
$\boldsymbol{T}$.

\subsection{\label{subsec:PCA}Principal component analysis}

In \cite{BROOMHEAD1986delayEmbed} it is suggested to use Principal
Component Analysis to find an optimal delay embedding. Let us construct
the matrix
\[
\boldsymbol{V}=\begin{pmatrix}\boldsymbol{u}_{1} & \boldsymbol{u}_{2} & \cdots & \boldsymbol{u}_{\left\lfloor \frac{M-L}{s}\right\rfloor }\end{pmatrix},
\]
where $\boldsymbol{u}_{p}$ are the columns of $\boldsymbol{V}$.
Then calculate the singular value decomposition (or equivalently Jordan
decomposition) of the symmetric matrix in the form
\[
\boldsymbol{V}\boldsymbol{V}^{T}=\boldsymbol{H}\boldsymbol{\Sigma}\boldsymbol{H}^{T},
\]
where $\boldsymbol{H}$ is a unitary matrix and $\boldsymbol{\Sigma}$
is diagonal. Now let us take the columns of $\boldsymbol{H}$ corresponding
to the $d$ largest elements of $\boldsymbol{\Sigma}$, and these
columns then become the $d$ rows of $\boldsymbol{T}$. In this case,
we can only achieve an approximate reconstruction of the signal by
defining
\[
\boldsymbol{v}=\begin{pmatrix}T_{1\ell} & T_{2\ell} & \cdots & T_{d\ell}\end{pmatrix},
\]
where $T_{k\ell}$ is the element of matrix $\boldsymbol{T}$ in the
$k$-th row and $\ell$-th column. The dot product $z_{ps+\ell}\approx\boldsymbol{v}\cdot\boldsymbol{x}_{p}$
approximately reproduces our signal. The quality of the approximation
depends on how small the discarded singular values in $\boldsymbol{\Sigma}$
are.

\subsection{\label{subsec:DFT}Perfect reproducing filter bank}

We can also use discrete Fourier transform (DFT) to create state-space
vectors and reconstruct the original signal exactly. Such transformations
are called perfect reproducing filter banks \cite{strang1996wavelets}.
The DFT of vector $\boldsymbol{u}_{p}$ is calculated by $\hat{\boldsymbol{u}}_{p}=\boldsymbol{K}\boldsymbol{u}_{p}$,
where matrix $\boldsymbol{K}$ is defined by its elements
\begin{align*}
K_{1,q} & =\frac{\sqrt{2}}{2},\\
K_{2p,q} & =\cos\left(2\pi p\frac{q-1}{2w+1}\right),\\
K_{2p+1,q} & =\sin\left(2\pi p\frac{q-1}{2w+1}\right),
\end{align*}
where $p=1,\ldots,w$ and $q=1,\ldots,2w+1$. Note that $\boldsymbol{K}^{T}\boldsymbol{K}=\boldsymbol{I}\frac{2w+1}{2}$,
hence the inverse transform is $\boldsymbol{K}^{-1}=\frac{2}{2w+1}\boldsymbol{K}^{T}$.
Let us denote the $\ell$-th column of $\boldsymbol{K}$ by $\boldsymbol{c}_{\ell}$,
which creates a delay filter 
\[
\frac{2}{2w+1}\boldsymbol{c}_{\ell}^{T}\boldsymbol{K}\boldsymbol{u}_{p}=z_{ps+\ell},
\]
that delays the signal by $\ell$ samples. Separating $\boldsymbol{c}_{\ell}$
into components, such that 
\begin{equation}
\boldsymbol{c}_{\ell}=\boldsymbol{v}_{1}+\boldsymbol{v}_{2}+\cdots+\boldsymbol{v}_{d},\label{eq:DFT-decomp}
\end{equation}
we can create a perfect reproducing filter bank, that is the vector
\[
\hat{\boldsymbol{x}}_{p}=\frac{2}{2w+1}\begin{pmatrix}\boldsymbol{v}_{1}^{T}\boldsymbol{K}\boldsymbol{u}_{p}\\
\boldsymbol{v}_{2}^{T}\boldsymbol{K}\boldsymbol{u}_{p}\\
\vdots\\
\boldsymbol{v}_{2m}^{T}\boldsymbol{K}\boldsymbol{u}_{p}
\end{pmatrix}
\]
when summed over its components, the input $z_{ps+\ell}$ is recovered.
The question is how to create an optimal decomposition (\ref{eq:DFT-decomp}).
As we assumed that our signal has $m$ frequencies, we can further
assume that we are dealing with $m$ coupled nonlinear oscillators,
and therefore we can set $d=2m$. We therefore divide the resolved
frequencies of the DFT, which are $\tilde{f}_{k}=k\frac{f_{s}}{2w+1}$,
$k=0,\ldots,w$ into $m$ bins, which are centred around the frequencies
of the signal $f_{1},\ldots,f_{m}$. We set the boundaries of these
bins as
\[
b_{0}=0,b_{1}=\frac{f_{1}+f_{2}}{2},b_{2}=\frac{f_{2}+f_{3}}{2},\cdots,b_{m-1}=\frac{f_{m-1}+f_{m}}{2},b_{m}=\frac{wf_{s}}{2w+1}
\]
 and for each bin labelled by $k$, we create an index set
\[
\mathcal{I}_{k}=\left\{ l\in1,\ldots,w:b_{k-1}<\tilde{f}_{l}\le b_{k}\right\} ,
\]
which ensures that all DFT frequencies are taken into account without
overlap, that is $\cup_{k}\mathcal{I}_{k}=\left\{ 0,1,\ldots,w\right\} $
and $\mathcal{I}_{p}\cap\mathcal{I}_{q}=\emptyset$ for $p\neq q$.
This creates a decomposition (\ref{eq:DFT-decomp}) in the form of
\[
v_{2k-1,2j}=\begin{cases}
c_{\ell,2j} & \text{if}\quad j\in\mathcal{I}_{k}\\
0 & \text{if}\quad j\notin\mathcal{I}_{k}
\end{cases},\;v_{2k,2j+1}=\begin{cases}
c_{\ell,2j+1} & \text{if}\quad j\in\mathcal{I}_{k}\\
0 & \text{if}\quad j\notin\mathcal{I}_{k}
\end{cases}
\]
with the exception of $\boldsymbol{v}_{1}$, for which we also set
$v_{1,1}=c_{\ell,1}$ to take into account the moving average of the
signal and to make sure that $\sum\boldsymbol{v}_{k}=\boldsymbol{c}_{\ell}$.
Here $v_{k,l}$ stands for the $l$-th element of vector $\boldsymbol{v}_{k}$.
Finally, we define the transformation matrix
\[
\boldsymbol{H}=\frac{2}{2w+1}\begin{pmatrix}\left\Vert \boldsymbol{v}_{1}\right\Vert ^{-1}\boldsymbol{v}_{1}^{T}\\
\vdots\\
\left\Vert \boldsymbol{v}_{2m}\right\Vert ^{-1}\boldsymbol{v}_{2m}^{T}
\end{pmatrix},
\]
where each row is normalised such that our separated signals have
the same amplitude. The newly created signal is then
\[
\boldsymbol{x}_{p}=\boldsymbol{T}\boldsymbol{u}_{p},
\]
where $\boldsymbol{T}=\boldsymbol{H}\boldsymbol{K}$ and the original
signal can be reproduced as
\[
z_{ps+\ell}=\begin{pmatrix}\left\Vert \boldsymbol{v}_{1}\right\Vert  & \cdots & \left\Vert \boldsymbol{v}_{2m}\right\Vert \end{pmatrix}\boldsymbol{x}_{p}.
\]

\section{\label{sec:proof-FreqDamp}Proof of proposition \ref{prop:Polar-fr-dm}}
\begin{proof}[Proof of proposition \ref{prop:Polar-fr-dm}]
First we explore what happens if we introduce a new parametrisation
of the decoder $\boldsymbol{W}$ that replaces $r$ by $\rho\left(r\right)$
and $\theta$ by $\theta+\gamma\left(\rho\left(r\right)\right)$,
where $\rho:\mathbb{R}^{+}\to\mathbb{R}^{+}$ is an invertible function
with $\rho\left(0\right)=0$ and $\gamma:\mathbb{R}^{+}\to\mathbb{R}$
with $\gamma\left(0\right)=0$. This transformation creates a new
decoder $\tilde{\boldsymbol{W}}$ of $\mathcal{M}$ in the form of
\begin{align}
\tilde{\boldsymbol{W}}\left(r,\theta\right) & =\boldsymbol{W}\left(\rho\left(r\right),\theta+\gamma\left(\rho\left(r\right)\right)\right),\label{eq:Polar-re-param}\\
\tilde{\boldsymbol{W}}\left(\rho^{-1}\left(r\right),\theta-\gamma\left(r\right)\right) & =\boldsymbol{W}\left(r,\theta\right).\label{eq:Polar-inv-re-param}
\end{align}
Equation (\ref{eq:Polar-re-param}) is not the only re-parametrisation,
but this is the only one that preserves the structure of the polar
invariance equation (\ref{eq:Polar-Invariance}). After substituting
the transformed decoder, invariance equation (\ref{eq:Polar-Invariance})
becomes
\[
\tilde{\boldsymbol{W}}\left(\tilde{R}\left(r\right),\theta+\tilde{T}\left(r\right)\right)=\boldsymbol{F}\left(\tilde{\boldsymbol{W}}\left(r,\theta\right)\right),
\]
where
\begin{align*}
\tilde{R}\left(r\right) & =\rho^{-1}\left(R\left(\rho\left(r\right)\right)\right),\\
\tilde{T}\left(r\right) & =T\left(\rho\left(r\right)\right)+\gamma\left(\rho\left(r\right)\right)-\gamma\left(R\left(\rho\left(r\right)\right)\right).
\end{align*}
In the new coordinates, the instantaneous frequency $\omega\left(r\right)$
and damping ratio $\zeta\left(r\right)$ become
\begin{align}
\omega\left(r\right) & =T\left(\rho\left(r\right)\right)+\gamma\left(\rho\left(r\right)\right)-\gamma\left(R\left(\rho\left(r\right)\right)\right) & \left[\text{rad}/\text{step}\right],\label{eq:Polar-freq-newpar}\\
\zeta\left(r\right) & =-\frac{\log\left[r^{-1}\rho^{-1}\left(R\left(\rho\left(r\right)\right)\right)\right]}{\omega\left(r\right)} & \left[-\right].\label{eq:Polar-damp-newpar}
\end{align}

Before we go further, let us introduce the notation
\[
\left\langle \boldsymbol{x},\boldsymbol{y}\right\rangle =\frac{1}{2\pi}\int_{0}^{2\pi}\left\langle \boldsymbol{x}\left(\theta\right),\boldsymbol{x}\left(\theta\right)\right\rangle _{X}\mathrm{d}\theta,
\]
where $\boldsymbol{x},\boldsymbol{y}:\left[0,2\pi\right]\to X$ and
$\left\langle \cdot,\cdot\right\rangle _{X}$ is the inner product
on vector space $X$. The norm of function $\boldsymbol{x}$ is then
defined by $\left\Vert \boldsymbol{x}\right\Vert =\sqrt{\left\langle \boldsymbol{x},\boldsymbol{x}\right\rangle }$.

We now turn to the phase constraint given by equation (\ref{eq:Polar-phase}).
We choose a fixed $\epsilon>0$, substitute $r_{1}=\rho\left(r+\epsilon\right)$,
$r_{2}=\rho\left(r\right)$ and the phase shift $\gamma=\gamma\left(\rho\left(r+\epsilon\right)\right)-\gamma\left(\rho\left(r\right)\right)$
into equation (\ref{eq:Polar-phase}) and find the objective function
\[
J\left(\gamma\left(\rho\left(r+\epsilon\right)\right)\right)=\epsilon^{-1}\left\Vert \boldsymbol{W}\left(\rho\left(r+\epsilon\right),\cdot+\gamma\left(\rho\left(r+\epsilon\right)\right)\right)-\boldsymbol{W}\left(\rho\left(r\right),\cdot+\gamma\left(\rho\left(r\right)\right)\right)\right\Vert ^{2},
\]
where we also divided by $\epsilon$. We then need to find the value
of $\gamma\left(\rho\left(r+\epsilon\right)\right)$ that minimises
$J$ for a fixed $\epsilon$. A necessary condition for a local minimum
of $J$ is that the derivative $J^{\prime}=0$, that is,
\[
\epsilon^{-1}\left\langle \boldsymbol{W}\left(\rho\left(r+\epsilon\right),\cdot+\gamma\left(\rho\left(r+\epsilon\right)\right)\right)-\boldsymbol{W}\left(\rho\left(r\right),\cdot+\gamma\left(\rho\left(r\right)\right)\right),D_{2}\boldsymbol{W}\left(\rho\left(r+\epsilon\right),\cdot+\gamma\left(\rho\left(r+\epsilon\right)\right)\right)\right\rangle =0.
\]
To find a continuous parametrisation, let us now take the limit $\epsilon\to0$
to get
\[
\dot{\rho}\left(r\right)\left\langle D_{1}\boldsymbol{W}\left(\rho\left(r\right),\cdot+\gamma\left(\rho\left(r\right)\right)\right)+D_{2}\boldsymbol{W}\left(\rho\left(r\right),\cdot+\gamma\left(\rho\left(r\right)\right)\right)\dot{\gamma}\left(\rho\left(r\right)\right),D_{2}\boldsymbol{W}\left(\rho\left(r\right),\cdot+\gamma\left(\rho\left(r\right)\right)\right)\right\rangle =0.
\]
We can also remove the constant phase shift and use $t=\rho\left(r\right)$
to find the \emph{phase condition}
\begin{equation}
\left\langle D_{1}\boldsymbol{W}\left(t,\cdot\right)+D_{2}\boldsymbol{W}\left(t,\cdot\right)\dot{\gamma}\left(t\right),D_{2}\boldsymbol{W}\left(t,\cdot\right)\right\rangle =0.\label{eq:Polar-phase-cond}
\end{equation}
Integrating equation (\ref{eq:Polar-phase-cond}) leads to the phase
shift 
\begin{equation}
\gamma\left(t\right)=-\int_{0}^{t}\frac{\left\langle D_{1}\boldsymbol{W}\left(r,\cdot\right),D_{2}\boldsymbol{W}\left(r,\cdot\right)\right\rangle }{\left\langle D_{2}\boldsymbol{W}\left(r,\cdot\right),D_{2}\boldsymbol{W}\left(r,\cdot\right)\right\rangle }\mathrm{d}r\label{eq:Polar-phase-shift}
\end{equation}
Note that phase conditions are commonly used in numerical continuation
of periodic orbits \cite{Beyn2007}, for slightly different reasons.

The next quantity to fix is the amplitude constraint given by equation
(\ref{eq:Polar-amplitude}). In the new parametrisation, equation
(\ref{eq:Polar-amplitude}) can be written as
\begin{equation}
\left\Vert \boldsymbol{W}\left(\rho\left(r\right),\cdot\right)\right\Vert ^{2}=r^{2}.\label{eq:Polar-amplitude-cond}
\end{equation}
If we define $\kappa\left(r\right)=\left\Vert \boldsymbol{W}\left(r,\cdot\right)\right\Vert $,
then we have
\begin{equation}
\rho\left(r\right)=\kappa^{-1}\left(r\right).\label{eq:Polar-amplitude-final}
\end{equation}
Finally, substituting (\ref{eq:Polar-phase-shift}), (\ref{eq:Polar-amplitude-final})
into equations (\ref{eq:Polar-freq-newpar}), (\ref{eq:Polar-damp-newpar})
completes the proof.
\end{proof}
\begin{proof}[Proof of remark \ref{rem:VF-freq-dm}]
In case of a vector field $\boldsymbol{f}:X\to X$, and invariance
equation 
\[
D_{1}\boldsymbol{W}\left(r,\theta\right)R\left(r\right)+D_{2}\boldsymbol{W}\left(r,\theta\right)T\left(r\right)=\boldsymbol{f}\left(\boldsymbol{W}\left(r,\theta\right)\right),
\]
the instantaneous frequency and damping ratios with respect to the
coordinate system defined by the decoder $\boldsymbol{W}$, are
\begin{equation}
\left.\begin{array}{rl}
\omega\left(r\right) & =T\left(r\right),\\
\zeta\left(r\right) & =-\frac{R\left(r\right)}{r\omega\left(r\right)},
\end{array}\right\} \label{eq:Polar-VF-naive}
\end{equation}
respectively. However the coordinate system defined by $\boldsymbol{W}$
is nonlinear, which needs correcting. As in the proof of proposition
\ref{prop:Polar-fr-dm}, we assume a coordinate transformation
\begin{equation}
\tilde{\boldsymbol{W}}\left(r,\theta\right)=\boldsymbol{W}\left(\rho\left(r\right),\theta+\gamma\left(\rho\left(r\right)\right)\right),\label{eq:Polar-VF-tansform}
\end{equation}
and find that invariance equation (\ref{eq:Polar-VF-invariance})
becomes
\begin{equation}
D_{1}\tilde{\boldsymbol{W}}\left(r,\theta\right)\tilde{R}\left(r\right)+D_{2}\tilde{\boldsymbol{W}}\left(r,\theta\right)\tilde{T}\left(r\right)=\boldsymbol{f}\left(\tilde{\boldsymbol{W}}\left(r,\theta\right)\right).\label{eq:Polar-VF-tr-invar}
\end{equation}
When substituting (\ref{eq:Polar-VF-tansform}) into (\ref{eq:Polar-VF-tr-invar})
we find
\begin{multline}
D_{1}\tilde{\boldsymbol{W}}\left(r,\theta\right)\dot{\kappa}\left(\rho\left(r\right)\right)R\left(\rho\left(r\right)\right)+D_{2}\tilde{\boldsymbol{W}}\left(r,\theta\right)\left(T\left(\rho\left(r\right)\right)-\dot{\gamma}\left(\rho\left(r\right)\right)R\left(\rho\left(r\right)\right)\right)=\\
=\boldsymbol{f}\left(\tilde{\boldsymbol{W}}\left(r,\theta\right)\right).\label{eq:Polar-VF-tr-invar-expand}
\end{multline}
Comparing (\ref{eq:Polar-VF-tr-invar-expand}) and (\ref{eq:Polar-VF-invariance})
we extract that
\begin{equation}
\left.\begin{array}{rl}
\tilde{R}\left(r\right) & =\dot{\kappa}\left(\rho\left(r\right)\right)R\left(\rho\left(r\right)\right)\\
\tilde{T}\left(r\right) & =T\left(\rho\left(r\right)\right)-\dot{\gamma}\left(\rho\left(r\right)\right)R\left(\rho\left(r\right)\right)
\end{array}\right\} .\label{eq:Polar-VF-ROM}
\end{equation}
Recognising that $t=\rho\left(r\right)=\kappa^{-1}\left(r\right)$
and replacing $T$ with $\tilde{T}$, $R$ with $\tilde{R}$ in formulae
(\ref{eq:Polar-VF-naive}) proves remark \ref{rem:VF-freq-dm}.
\end{proof}

\subsection*{Acknowledgement}

I would like to thank Branislaw Titurus for supplying the data of
the jointed beam. I would also like to thank Sanuja Jayatilake, who
helped me collect more data on the jointed beam, which eventually
was not needed in this study. Discussions with Alessandra Vizzaccaro,
David Barton and his research group provided great inspiration to
complete this research. A.V. has also commented on a draft of this
manuscript.

\bibliographystyle{plain}
\bibliography{../../Bibliography/AllRef}

\subsection*{Conflict of Interest}

The author has no relevant financial or non-financial interests to
disclose. No funds, grants, or other support was received.

\subsection*{Data availability}

The datasets generated and/or analysed during the current study are
available in the FMA repository, \href{https://github.com/rs1909/FMA}{https://github.com/rs1909/FMA}.
\end{document}